\documentclass[11pt, oneside]{article}   	
\usepackage{geometry}                		
\usepackage{graphicx}					
\usepackage{amssymb}
\usepackage{amsmath}
\usepackage{amsthm}
\usepackage{mathabx}
\usepackage{dsfont}
\usepackage{mathtools}
\usepackage{tikz}
\usepackage{todonotes}
\usepackage{hyperref}
\usepackage{constants}
\usepackage{enumerate}
\usepackage{enumitem}
\providecommand*{\turnangle}{\rotatebox[origin=c]{180}{$\angle$}}

\usepackage{cleveref}
\newconstantfamily{c}{symbol=c}

\newtheorem{The}{Theorem}[section]
\newtheorem{lemma}[The]{Lemma}
\newtheorem{proposition}[The]{Proposition}
\newtheorem{Cor}[The]{Corollary}
\theoremstyle{definition}

\theoremstyle{remark}

\newtheorem{Rk}[The]{Remark}
\newtheorem{Rks}[The]{Remarks}

\newcommand{\Pannealed}{\mathbb{P}}	
\newcommand{\Eannealed}{\mathbb{E}}	

\newcommand{\Penv}{\mathbf{P}}	

\newcommand{\Qcoupling}{\mathbb{Q}}	
\newcommand{\Ecoupling}{\mathbb{E}_{\mathbb{Q}}}

\newcommand{\pfull}{p_\bullet}
\newcommand{\pempty}{p_{\circ}}

\newcommand{\integers}{\mathbb{Z}}	

\newcommand{\cI}{\mathcal{I}}
\newcommand{\1}{\mathds{1}}

\newcommand{\dE}{\mathbb{E}}
\newcommand{\dN}{\mathbb{N}}
\newcommand{\dP}{\mathbb{P}}
\newcommand{\dQ}{\mathbb{Q}}
\newcommand{\dR}{\mathbb{R}}
\newcommand{\dZ}{\mathbb{Z}}

\newcommand{\nonlazy}{\alpha}
\newcommand{\lazy}{1-\alpha}

\newcommand{\posreals}{\mathbb{R}_+}

\newcommand{\Poisson}{\text{Poi}}

\newcommand{\gui}{\color{blue}}

\newcommand{\tmpdel}{\color{green}}
\newcommand{\tmpend}{\color{black}}

\title{Law of Large Numbers for a random walk on dynamic environments with drift}
\author{Guillaume Conchon-\,\hspace{-.3mm}-Kerjan\thanks{Department of Mathematics, King's College London, WC2R 2LS, United Kingdom \newline Email: \texttt{guillaume.conchon-kerjan/toril.palaniappan@kcl.ac.uk}.}~\thanks{GCK is grateful to EPSRC for support through the grant UKRI2781.} \and Toril Palaniappan\footnotemark[1]}

\begin{document}
\maketitle

\begin{abstract}
We study a random walk driven by a particle system from a generic class, and establish a law of large numbers for the walk for almost all densities of the environment.
\\
To do so, we exploit the finite-ranged approximations of the environment from \cite{CKKR24} in a new way, whereby the monotonicity (in the density) of the walker's displacement is leveraged to show the existence of an actual speed. This bypasses the constructions in \cite{HKT20} and generalises its Theorem 1.1, which applied to specific environments. 
\\
We illustrate this with a family of particle systems where the particles have underlying drifts, namely mixtures of APCRWs (Asymmetric Poisson Cloud of Random Walks). In particular, when all particles have the same drift, we prove the LLN under any choice of parameters save one critical density of the environment. To our knowledge, this is the first time that such a conservative and slow-mixing environment with drift is treated outside of the non-nestling case (in which the walker is already assumed to travel strictly faster/slower than the drift~\cite{BallisticRWsLateralDecoupling}).
\end{abstract}


\section{Introduction}



{
Random walks in random environments is a prominent topic in probability theory. Over the past few decades, a wide variety of models have emerged, one motivation being to capture phenomena beyond the rigidity of a constant homogeneous environment, potentially escaping the universality of the Brownian motion. A notable early instance are i.i.d. static environments~\cite{KKS}, \cite{Solomon}, which display anomalous (subdiffusive) fluctuations, and transience with zero-speed in dimension one - some analogous questions in higher dimensions being still open after long-standing efforts (see e.g. \cite{zbMATH06028501, SznitmanConj,zbMATH02070282} and associated references). 
\\
Coming back to what Brownian motion was originally meant to model - the motion of pollen grains on water - a natural idea is to imagine a random walk advected by a particle system~\cite{Avenaal1HydroLim,BlondelHilarioTeixeira, HHSST15,HS15,JaraMenezes,MountfordVares}. But even the simplest instances, from a (discrete) probabilistic point of view, pose significant technical challenges. First, classical techniques tailored for static environment (e.g. from potential theory) become inefficient when the environment is dynamic. Second, standard models for particle systems such as exclusion processes, or even clouds of independent random walks, are conservative and exhibit slow-mixing properties, yielding strong space-time correlations and escaping the frameworks such as in~\cite{Allasia} or~\cite{BlondelHilarioTeixeira}. Third, the random walk on such systems is generally non-reversible (though the particle system itself may be - see e.g.~\cite{CKKR24} for details), so that many elaborate schemes to derive an invariant measure from the point of view of the walk (as for instance in~\cite{zbMATH03944984,zbMATH03951720,zbMATH06458602}) do not apply. Notable recent progress on environments of driftless particles include laws of large numbers~\cite{HKT20} and a strict monotonicity principle~\cite{CKKR24} (in dimension 1 - see e.g.~\cite{bethuelsen2024randomwalksrandomwalks} for dimension $\geq 5$).  
\\
When the environment particles have an inherent drift, further technical difficulties may appear, in particular, the use of lateral decoupling (see e.g.~\cite{HKT20} and references therein) becomes more delicate. This difference is striking in the case of the exclusion process, a system of non-overlapping particles which in the zero-drift case (SEP, simple symmetric exclusion process) has convenient structural properties (e.g. reversible and equivalent to an interchange process) that its asymmetric version, the ASEP, does not possess. While there is now abundant literature on random walks advected by the SEP (see e.g.~\cite{AvenaBlondelFaggionato,dosSantosBoundsSpeed} and aforementioned works), results on environments with a drift are much more sparse to the best of our knowledge, and require a strong preliminary ballisticity assumption \cite{BallisticRWsLateralDecoupling}.
We discuss this in greater detail in Section~\ref{subsec:discussion}.
\\
In this work, we derive directly the law of large numbers for the wide class of environments introduced in~\cite{CKKR24} (which includes the SEP and PCRW among others), and apply it to the Asymmetric Poisson Cloud of Random Walks (APCRW), which can be seen as a more amenable variant of the ASEP.

\subsection{Model} \label{sec: model}
\textbf{The environment.} \\ 
The APCRW consists of a collection of independent lazy drifted random walks. It depends on three parameters: the density $\rho > 0$, and $\alpha,q \in (0,1)$.
\\
Formally, it is a discrete-time process  $\eta = \left(\eta_t(x) : x \in \integers, t \geq 0 \right)$ such that at time 0, each site $x\in \integers$ is independently given $\eta_0(x)\sim\Poisson(\rho)$ particles. All of these particles perform independent discrete-time random walks with laziness $1-\alpha$ and drift $2q-1$. That is, they have probability $1-\alpha$ to stay put, and $\alpha q$ (resp. $\alpha(1-q)$)  to jump to their right (resp. left) neighbour. $\eta_t(x)$ then denotes the number of particles at position $x$ at time $t$. 


Some of our results will work for a general environments class, which satisfy a specific set of conditions detailed in Sections~\ref{subsec:DefRWRDREgeneral} and~\ref{subsec:conditions}, and to which the APCRW belongs. Roughly speaking, we assume at \ref{pe:markov} - \ref{pe:density} that the environments is Markov in time, has a stationary measure (namely $\Poisson(\rho)^{\otimes\integers}$ for the APCRW) and, crucially, an underlying notion of density $\rho >0$. The coupling conditions \ref{pe:densitychange} - \ref{pe:nacelle} entail in a nutshell that, although the environment may mix slowly, this mixing in effect becomes faster when sprinkling in more particles thus slightly raising the density. 
 We will vary $\rho$ to observe its impact on the random walk, while fixing all the other parameters. 
 \\
 These conditions turn out to be quite versatile - for instance, they hold for any superposition of finitely many environments each satisfying them (up to innocuous changes, as detailed in Section~\ref{subsec:superpos}).
\\
\\
\textbf{The random walk.} \\
Now with the environment set up, we can define the discrete-time random walk $(X_n)_{n \geq 0}$ with two additional parameters $\pfull, \pempty \in [0,1]$, where $\pfull > \pempty$. The case $\pfull < \pempty$ can be readily treated in the same way to obtain symmetric results, and will be left out in the sequel.
We start the random walk from $X_0 = 0$. Then for every integer $n\geq 0$, we let $U_n$ be uniform over $[0,1]$, independent of everything else, and set
\begin{equation}\label{e:def-RWDRE}
X_{n+1}=X_n+2\times{\1}\big\{ U_{n} \leq (p_{\circ}-p_{\bullet})\1\{ \eta_{n +m}(X_n)=0 \} + p_{\bullet}\big\}-1.
\end{equation}
In words, if $\eta_n(X_n) = 0$ (meaning that the current site of the walker is ``empty", i.e. does not contain any environment particle), the walker jumps to the right with probability $\pempty$, and jumps to the left with probability $1 - \pempty$. 
Otherwise, if $\eta_n(X_n) \geq 1$ (i.e. the current site of the walker contains at least one particle, hence is deemed ``occupied"), the walk jumps to the right with probability $\pfull$ at time $n$, and jumps to the left with probability $1 - \pfull$. 
These dynamics are illustrated in \Cref{fig:RW Def}. 

\begin{figure}
    \centering
    \includegraphics[width=\linewidth]{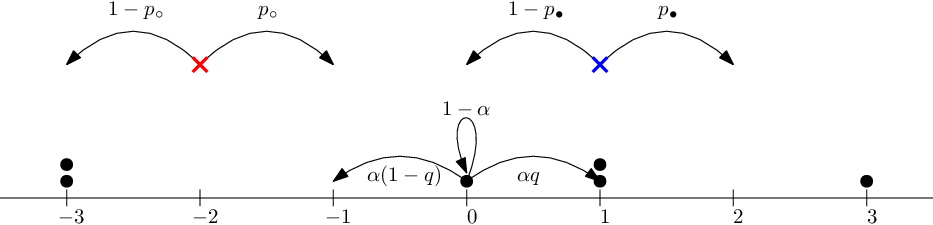}
    \caption{An illustration of a random walk on an Asymmetric Poisson Cloud of Random Walks. The bottom set of points represents the APCRW, and the top points represent the random walk on that APCRW environment. The arrows show the jump probabilities. The red walker (left cross) has no environment particles underneath it, so it uses $\pempty$, and the blue walker (right cross) does have environment particles underneath it, so it uses $\pfull$.}
    \label{fig:RW Def}
\end{figure}

\subsection{Results}
Our first result is a law of large numbers for the random walk on any environment satisfying our conditions, for most values of the density. 

\begin{The}\label{thm:moregeneral}
Let $\eta$ be an environment satisfying~\ref{pe:markov}-\ref{pe:density} and  \ref{pe:densitychange}-\ref{pe:nacelle}. Then for all $\rho \in (0,\infty)$ except at most countably many, there exists $v(\rho)$ such that 
\begin{equation}\label{eq:thmmoregeneral}
\frac{X_n}{n}   \underset{n\rightarrow \infty}{\longrightarrow} v(\rho) \,\,\text{ a.s.}
\end{equation}
Moreover, the map $\rho \mapsto v(\rho)$ is increasing. 
\end{The}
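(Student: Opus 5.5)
The plan is to sandwich $X_n/n$ between two deterministic, monotone functions of the density and exploit the fact that a monotone function has at most countably many discontinuities. Concretely, define
\[
v_+(\rho)=\limsup_{n\to\infty}\frac{X_n}{n},\qquad v_-(\rho)=\liminf_{n\to\infty}\frac{X_n}{n}.
\]
The first step is to show these are almost surely constant. For this we would use the finite-range approximations of \cite{CKKR24}, as granted by conditions \ref{pe:densitychange}--\ref{pe:nacelle}. Alternatively, since shifting the starting time only changes $X_n$ by $O(1)$, a tail/0-1 argument would suffice, using ergodicity of the stationary environment from \ref{pe:markov}--\ref{pe:density} together with the independence of the $U_n$. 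If almost-sure constancy turns out to be delicate, we would work instead with the deterministic quantities
\[
\overline v(\rho)=\inf\Bigl\{v:\ \Pannealed\bigl(X_n\ge vn \text{ i.o.}\bigr)=0\Bigr\},
\]
and the analogous $\underline v(\rho)$.

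The second step is the key monotonicity input: for $\rho<\rho'$,
\[
\overline v(\rho)\le \underline v(\rho').
\]
That is, the limsup speed at the lower density is at most the liminf speed at the higher density. We would prove this using the sprinkling couplings \ref{pe:densitychange}--\ref{pe:nacelle}. In a coupling of $\eta^\rho$ and $\eta^{\rho'}$, the denser environment dominates the sparser one after a mixing window, uniformly on space-time boxes of size $L\times L$ with failure probability summable in $L$. Since $\pfull>\pempty$, the standard monotone coupling of walks driven by the same $U_n$ keeps the walker on the denser environment to the right, provided their paths are ordered and the environments are ordered along the relevant region. The argument is then organised by scales $L_k$. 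On each block we restart the comparison using the Markov property \ref{pe:markov}. Borel--Cantelli shows that only finitely many blocks fail, and each failed block costs at most $O(L_k)$ in displacement. This yields $\limsup X^\rho_n/n\le\liminf X^{\rho'}_n/n$ almost surely in the coupling, and hence the deterministic inequality above.

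The final step is the conclusion. Both $\overline v$ and $\underline v$ are nondecreasing, with $\underline v\le\overline v$, and satisfy
\[
\overline v(\rho)\le\underline v(\rho')\le\overline v(\rho') \quad\text{for } \rho<\rho'.
\]
Hence at every continuity point $\rho$ of $\overline v$ we get
\[
\overline v(\rho)=\lim_{\rho'\downarrow\rho}\underline v(\rho')\quad\text{and}\quad \underline v(\rho)\ge\lim_{\rho''\uparrow\rho}\overline v(\rho'')=\overline v(\rho),
\]
so the two coincide. A monotone function has at most countably many discontinuities, so $v(\rho):=\overline v(\rho)=\underline v(\rho)$ exists off a countable set, and $X_n/n\to v(\rho)$ almost surely there. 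The inequality $\overline v(\rho)\le\underline v(\rho')$ also gives that $v$ is nondecreasing. To upgrade this to strict increase, we would invoke the strict monotonicity principle of \cite{CKKR24}, which applies under the same conditions: it gives a linear gain in displacement when the density is raised.

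The main obstacle is the second step. The difficulty is making the comparison hold for all large $n$ almost surely rather than merely in expectation. The mixing is slow, and the coupling only orders the environments after a delay and on bounded boxes. Our first attempt would be a renormalisation in which each time block of length $L_k$ re-establishes domination using fresh sprinkled particles, with the delay absorbed as a vanishing fraction of the block length.
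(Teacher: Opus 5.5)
\textbf{The gap is in your Step 2, and that step carries the whole theorem.} The scheme you describe uses ordered environments, common uniforms, and finitely many failed blocks each costing $O(L_k)$. Such a scheme can only output a same-time comparison $X^{\rho}_n\le X^{\rho'}_n+o(n)$ for all large $n$. That yields $\overline v(\rho)\le\overline v(\rho')$ and $\underline v(\rho)\le\underline v(\rho')$. It does not yield the crossed inequality $\overline v(\rho)\le\underline v(\rho')$, because the times at which $X^{\rho}_n/n$ is near its limsup and the times at which $X^{\rho'}_n/n$ is near its liminf are unrelated.

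In fact the same-time comparison costs nothing. By \ref{pe:monotonicity} one has $\mathbf{P}^{\rho}\le_{\text{st.}}\mathbf{P}^{\rho'}$, so one can take $\eta^{\rho}_t\preccurlyeq\eta^{\rho'}_t$ for all $t$. Lemma~\ref{lem:monotone} with common uniforms then gives $X^{\rho}_n\le X^{\rho'}_n$ for every $n$, with no error term and no use of the sprinkling conditions. That \ref{pe:couplings} and \ref{pe:drift} play no role there shows your mechanism is not where the difficulty lies. Without the crossed inequality your Step 3 collapses: two non-decreasing functions with $\underline v\le\overline v$ can differ at every point.

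\textbf{The missing idea is a deterministic benchmark with genuine concentration, placed at an intermediate density.} The paper uses the finite-range model $\mathbb{P}^{\rho,L}$, in which the environment is resampled from $\mu_\rho$ at multiples of $L$. Its $L$-block increments are i.i.d., so Azuma shows $X^{\rho,L}_n/n$ concentrates around $v(\rho,L)=\mathbb{E}[X^{\rho}_L]/L$. Resampling destroys the ordering of the environments, and that is exactly where \ref{pe:couplings} and \ref{pe:drift} are needed. Proposition~\ref{prop: many to one coupling} keeps $X^{\rho}$ below $X^{\rho+\varepsilon,L}$ up to a lag $\lceil n/L\rceil f(L)$, and symmetrically.

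Even then, the crossing has to be produced on the deterministic side. Proposition~\ref{prop: liminf limsup} shows $\limsup_L v(\rho,L)\le\liminf_L v(\rho+\varepsilon,L)$. It does so by chaining that coupling dyadically from an arbitrary range $k$ to an arbitrary larger range $k'$, with summable density increments $\varepsilon_i=(2^ik)^{-1/400}$. This defines $v(\rho)=\lim_L v(\rho,L)$ off a countable set. The deviation lemma with $L=\lfloor\sqrt n\rfloor$, plus Borel--Cantelli, then gives $X_n/n\to v(\rho)$ at continuity points of $v$.

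Two smaller remarks. Your Step 1 appeal to ergodicity is not supported by \ref{pe:markov}--\ref{pe:density}, but your fallback with deterministic $\overline v,\underline v$ avoids it. Your use of \cite{CKKR24} for strict monotonicity matches the paper.
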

\begin{Rk}[Strict monotonicity]
The result holds if we only assume that~\ref{pe:markov}-~\ref{pe:density}, \ref{pe:drift} and~\ref{pe:couplings} hold, with the caveat that we only show that $\rho \mapsto v(\rho)$ is non-decreasing. 
\end{Rk}
The dependency of the speed $v(\rho)$ on the other parameters of the random walk is implicit, and throughout, we will write $v(\rho)$. 

Up to the at most countably many exceptional densities, this generalises \cite[Theorem 1.1]{HKT20}, which applied to the Symmetric Exclusion Process (SEP) and Poisson Cloud of Random Walks (PCRW, that is, the APCRW when $q=1/2$), two environments which satisfy the conditions of the theorem (as established in  \cite[Section 6 and Appendix B]{CKKR24}). 
\\

In the specific instance of the APCRW, we prove that the speed actually exists for all densities except at most one. 
It is currently an open question whether or not a law of large numbers holds 
at this critical density. 
\begin{The}\label{thm:main}
For the APCRW, with $\alpha, q, \pfull, \pempty \in (0,1)$ and $\pfull > \pempty$, for all $\rho \in (0,\infty)$ except possibly one value $\rho_0 >0$, there exists $v(\rho)\in (-1,1)$ such that
\begin{equation}\label{eq:thmmain}
\frac{X_n}{n}   \underset{n\rightarrow \infty}{\longrightarrow} v(\rho) \,\,\text{ a.s.}
\end{equation}
Moreover, the map $\rho \mapsto v(\rho)$ is increasing. 
\end{The}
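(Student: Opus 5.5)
We derive \Cref{thm:main} from \Cref{thm:moregeneral}, supplemented by an APCRW-specific ballisticity argument. Write $v_q := \alpha(2q-1)$ for the speed of a single environment particle. The key observation is that the countable exceptional set in \Cref{thm:moregeneral} only appears where the walker moves, on average, at the same speed as the environment.

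\textbf{Step 1: general framework.} First we check that the APCRW with parameters $\alpha,q\in(0,1)$ satisfies \ref{pe:markov}--\ref{pe:density} and \ref{pe:densitychange}--\ref{pe:nacelle}. The stationary measure is $\Poisson(\rho)^{\otimes\integers}$. Sprinkling is done by superposing an independent APCRW of density $\rho'-\rho$, which gives the monotone couplings. The mixing-under-sprinkling conditions should follow as in the PCRW case \cite[Appendix B]{CKKR24}, after passing to the frame moving at speed $v_q$, where particles are driftless up to lattice effects. \Cref{thm:moregeneral} then yields:
\begin{itemize}
\item a non-decreasing limiting upper speed $\bar v(\rho)=\limsup X_n/n$ and lower speed $\underline v(\rho)=\liminf X_n/n$, both a.s. constant;
\item the equality $\bar v(\rho) = \underline v(\rho)$ for all but countably many $\rho$;
\item strict monotonicity of $v$ in $\rho$.
\end{itemize}

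\textbf{Step 2: ballisticity relative to the drift.} This step upgrades the conclusion to ``all densities except one''. We aim to show that whenever $\underline v(\rho) > v_q$, or $\bar v(\rho) < v_q$, a law of large numbers holds. In this regime the walker is ballistic relative to the environment's drift, which is the non-nestling-type setting.

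Concretely, suppose $\underline v(\rho) > v_q + \varepsilon$. Then the walker keeps escaping to fresh regions of the environment moving at speed $v_q$. We would use the renormalisation or regeneration scheme from the finite-range approximations of \cite{CKKR24}, as in \cite{BallisticRWsLateralDecoupling}, to build approximate regeneration times. Between these times, the increments are close to i.i.d.\ with controlled tails, which forces $\bar v(\rho)=\underline v(\rho)$.

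An alternative is to perturb the density. By monotonicity, $v(\rho-\delta)\le \underline v(\rho)\le \bar v(\rho)\le v(\rho+\delta)$ at nearby non-exceptional densities. The lateral decoupling available in the ballistic regime would then show continuity of the speed at $\rho$.

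\textbf{Step 3: the critical density.} Define $\rho_0 := \inf\{\rho : \underline v(\rho) > v_q\}$. By strict monotonicity there is at most one $\rho$ with $\bar v(\rho)\ge v_q\ge \underline v(\rho)$ that is not already covered by Step 2. Strictness matters here: an interval of densities with speed $v_q$ is impossible because $v$ is increasing. Hence $\rho_0$ is the only possible exception.

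The bound $v(\rho)\in(-1,1)$ follows from uniform ellipticity, since $\pfull,\pempty\in(0,1)$ give a positive probability of a left step and of a right step at every time.

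\textbf{Main obstacle.} The hardest step is Step 2: obtaining a true law of large numbers at every density where the walker is faster or slower than $v_q$, using only the $\liminf$/$\limsup$ separation as input. I would first try the monotone sandwich combined with the quantitative estimates from \cite{CKKR24}. The aim is a uniform continuity estimate of the form $\Pannealed(X_n \ge (v(\rho+\delta)+\varepsilon)n)\to0$ for $\rho$ with $\underline v(\rho)>v_q$, obtained by showing that adding density $\delta$ changes the displacement by $O(\delta n)$ when the walker outruns the environment.
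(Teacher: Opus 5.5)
Your overall plan is the paper's: apply Theorem~\ref{thm:moregeneral} to the APCRW, run a regeneration argument at every density where the walker is separated from the particle speed $v_q=\alpha(2q-1)$, and use strict monotonicity to leave at most one critical density. The genuine gap is Step~2, which you yourself flag, and as formulated it cannot be closed. A renewal structure as in \cite[Section 4]{HHSST15} needs a \emph{quantitative} ballisticity bound $\dP^\rho(\exists n:\ X_n< nv_\star-K)\le \psi(K)$, with $\psi(K)$ decaying fast and some $v_\star>v_q$. This is \cite[(1.7)]{HHSST15}. It is what shows that good record times occur, that the walk exits the parallelogram through its right side, and that it does not backtrack in a large box, hence that the first regeneration time has finite moments. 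The same applies to \cite{BallisticRWsLateralDecoupling}, which takes this bound as its assumption (2.8), so it cannot supply it. Your hypothesis $\underline v(\rho)>v_q$ is a qualitative almost-sure statement and gives no control on the tail of the lag $K$. ``Approximate regeneration times from the finite-range approximations'' is not yet an argument. Your fallback, continuity of $v$ at $\rho$ via an $O(\delta n)$ perturbation bound, is a harder statement that is not available. The monotone sandwich alone only reproduces the countable-exception result. Also, Theorem~\ref{thm:moregeneral} does not give you a.s.\ constant $\limsup$/$\liminf$ speeds; its proof gives $v^-(\rho)\le\liminf_n X_n/n\le\limsup_n X_n/n\le v^+(\rho)$ a.s.

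The missing idea is that the finite-range coupling already produces the needed estimate at \emph{every} density. The separation is measured by $v^-(\rho)=\lim_{\rho'\uparrow\rho,\,\rho'\in\widebar J}v(\rho')$, not by the walk at $\rho$ itself. In Lemma~\ref{lem: generalised E_n^delta}, $X^\rho$ on $[0,n]$ is coupled via Proposition~\ref{prop: many to one coupling} with an $L$-range walk at a slightly perturbed good density. Azuma's inequality is then applied to the i.i.d.\ $L$-blocks of that walk. With $L=\lfloor\sqrt n\rfloor$ this makes $\dP^\rho(X_n<(v^-(\rho)-\delta)n)$ stretched-exponentially small in $n$. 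A union bound over $n\ge K/2$ then gives Corollary~\ref{cor: ballisticity}: $\dP^\rho(\exists n:\ X_n<nv_\star-K)\le 2\exp(-cK^{1/1000})$ for $v_\star<v^-(\rho)$.

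Accordingly, define $\rho_0=\sup\{\rho:\ v(\rho)<v_q\}$ over good densities. For $\rho>\rho_0$, strict monotonicity gives $v^-(\rho)>v_q$. You can then pick good $\bar\rho<\rho_\star$ in $(\rho_0,\rho)$ with $v_q<v(\bar\rho)<v(\rho_\star)<v^-(\rho)$. Take cones of slope $v(\bar\rho)$, use the ballisticity at speed $v(\rho_\star)$, and adapt \cite[Section 4]{HHSST15}: the influence-field bound is recentred at the drift, and the parallelogram has height $\beta t$.

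With your criterion in terms of $\underline v(\rho)$ instead, consider a density at which $v$ jumps across $v_q$, i.e.\ $v^-(\rho)\le v_q<\underline v(\rho)$. Step~2 would claim it even though no quantitative ballisticity is available there, and that is exactly the critical case the paper leaves open.
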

\begin{Rks}\textit{Changing the activity rate of the APCRW.} 
Our results can be readily adapted when changing the activity $\nu\in (0, \infty)$ of the environment, namely the number of steps attented by any APCRW particle per unit of time. This requires 
a few benign (but cumbersome) adaptations in the proofs when $\nu,1/\nu \not \in \mathbb{N}$. For the sake of simplicity, we focus on the case $\nu=1$ in this paper. 
\\
\textit{Central limit theorem.} Our proof actually yields an (annealed) central limit theorem for all $\rho \neq \rho_0$, owing to the renewal times from Section~\ref{sec:onedensity} having all their moments finite. We leave out the details.
\end{Rks}

To illustrate the versatility of our approach, we state a generalisation of Theorem~\ref{thm:main} to an arbitrary (finite) superposition of APCRWs. Fix $N\in \mathbb{N}$ and let $\eta=\sum_{i=1}^N\eta^{(i)}$ where for each $i$, $\eta^{(i)}$ is an APCRW with parameters $\alpha_i,q_i\in (0,1)$ and density $\beta_i\rho$ with $\beta_i>0$, all the $\eta^{(i)}$'s being together independent. As before, the random walk in space-time position $(x,t)$ has probability $p_\bullet$ (resp. $p_\circ$) to jump to the right if $\eta_t(x)\geq 1$ (resp. $\eta_t(x)= 0$), and complementary probability to jump to the left. 

\begin{Cor}[Mixture of APCRWs] \label{cor:APCRWsum}
For all $N\in \mathbb{N}$, $(\alpha_i)_{1\leq i\leq N},(q_i)_{1\leq i\leq N}\in (0,1)^N$, $(\beta_i)_{1\leq i\leq N}\in (0,\infty)^N$ and $p_\bullet>p_\circ \in (0,1)$, the following holds.
For all $\rho \in (0,\infty)$ except at most countably many, the random walk $X$ on the mixture of APCRWs defined above satisfies
\begin{equation}\label{eq:thmmoregeneral}
\frac{X_n}{n}   \underset{n\rightarrow \infty}{\longrightarrow} v(\rho) \,\,\text{ a.s.}
\end{equation}
for some limit $v(\rho)$ such that the map $\rho \mapsto v(\rho)$ is increasing.

Moreover, if the quantity $d_i=\alpha_i(2q_i-1)$ does not depend on $i$, then the above conclusion holds for all $\rho$ except at most one value. 
\end{Cor}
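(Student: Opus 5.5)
The corollary has two parts, and I will handle them as two reductions.

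\textbf{First part.} I will reduce it to Theorem~\ref{thm:moregeneral}. The task is to check that the superposition $\eta=\sum_{i=1}^N\eta^{(i)}$, with densities $\beta_i\rho$, satisfies \ref{pe:markov}--\ref{pe:density} and \ref{pe:densitychange}--\ref{pe:nacelle}, where $\rho$ is the density parameter.

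I expect each single APCRW to satisfy these conditions; this is presumably verified in a dedicated section, following the PCRW treatment of \cite[Appendix B]{CKKR24}. The announced stability under finite superpositions (Section~\ref{subsec:superpos}) should then do the rest:
\begin{itemize}
\item The Markov property and stationarity of $\bigotimes_i\Poisson(\beta_i\rho)^{\otimes\integers}$ are immediate from independence.
\item The density-increasing couplings can be built componentwise. Raising $\rho$ to $\rho'$ raises every $\beta_i\rho$ to $\beta_i\rho'$, so one sprinkles extra particles in each component independently and takes the product of the single-component couplings.
\end{itemize}
The "innocuous changes" mentioned for superpositions will be absorbed by rescaling constants by $\min_i\beta_i$ and $\max_i\beta_i$. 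Theorem~\ref{thm:moregeneral} then gives the LLN for all but countably many $\rho$, with $v$ increasing.

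\textbf{Second part (all $d_i$ equal to a common $d$).} Here I will mimic the proof of Theorem~\ref{thm:main}, which is the main obstacle. My understanding of that proof is as follows:
\begin{itemize}
\item The countable exceptional set comes from the discontinuity points of the monotone limits $\overline v(\rho)=\limsup X_n/n$ and $\underline v(\rho)=\liminf X_n/n$.
\item For the APCRW, every density with $v(\rho)\neq d$ can be handled directly. Since the walker's speed differs from the environment drift, lateral decoupling in the frame moving at speed $d$ yields renewal times with good tails (Section~\ref{sec:onedensity}), and hence the LLN at that single density.
\item Monotonicity then leaves only the single $\rho_0$ where the speed crosses $d$.
\end{itemize}

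For the mixture, equal $d_i$ means every component drifts at the same macroscopic speed $d$. In the frame moving at speed $d$, every component then looks like a driftless, diffusively spreading cloud. I will therefore rerun the one-density argument of Section~\ref{sec:onedensity}, replacing single-APCRW estimates (concentration of particle counts in space-time boxes, decoupling along lines of slope $\ne d$) with their union over the $N$ independent components. Each estimate holds per component with the same geometry and the bounds multiply, so the renewal construction goes through whenever $\overline v(\rho)> d$ or $\underline v(\rho)<d$.

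Strict monotonicity of $v$, from the first part, then shows that $\{\rho:\ d\in[\underline v(\rho),\overline v(\rho)]\}$ contains at most one point. The delicate step is checking that the one-density argument genuinely uses only the common drift $d$ and not the individual $\alpha_i,q_i$, for instance in the variance of particle displacement. I will first try to isolate where $\alpha,q$ enter those proofs and confirm that only $d$ matters beyond constants.
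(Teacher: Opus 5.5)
Your overall architecture matches the paper's. The mixture satisfies componentwise versions of the conditions (Propositions~\ref{prop: envt superposition} and~\ref{prop:conditionsforAPCRW}). The arguments of Sections~\ref{sec:couplings}--\ref{sec:speed} survive with worst-case constants and an extra factor $N$. For equal drifts, the renewal of Section~\ref{sec:onedensity} is rerun with $(2q-1)\alpha$ replaced by $d$.

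\textbf{The gap.} The step that fails as written is the reduction of the exceptional set to one point. You claim the renewal works whenever $\overline v(\rho)>d$ or $\underline v(\rho)<d$, with $\overline v,\underline v$ the a.s.\ $\limsup/\liminf$ of $X_n/n$. There are two problems:
\begin{itemize}
\item The pairing is reversed. A walker outrunning the particles needs a lower bound on its progress, i.e.\ $\underline v(\rho)>d$, and in the other case $\overline v(\rho)<d$.
\item More seriously, no almost-sure information on $X_n/n$ can feed the regeneration argument. Its indispensable input, playing the role of \cite[(1.7)]{HHSST15}, is the quantitative bound $\dP^\rho(\exists n:\ X_n<nv_\star-K)\le 2\exp(-cK^{1/1000})$ for some $v_\star>d$, i.e.\ Corollary~\ref{cor: ballisticity}. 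That bound comes from the Deviation Lemma and holds only for $v_\star<v^-(\rho)$, the left limit of $v$ along densities where $\lim_L v(\rho,L)$ exists. This is exactly why Theorem~\ref{thm:moregeneral} must be proved before the renewal can be built.
\end{itemize}
Note also that $\liminf$ and $\limsup$ of $X_n/n$ are not known a priori to be deterministic. What the Deviation Lemma gives is $v^-(\rho)\le\liminf X_n/n\le\limsup X_n/n\le v^+(\rho)$ a.s.

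\textbf{The fix.} Do the bookkeeping in terms of $v^\pm$, as the paper does. Set $\rho_0=\sup\{\rho:\ v(\rho)<d\}$ and take $\rho>\rho_0$. Pick $\rho'<\rho''$ in $(\rho_0,\rho)$ where $v$ exists. Then $d\le v(\rho')<v(\rho'')\le v^-(\rho)$, by strict monotonicity and Proposition~\ref{prop: liminf limsup}. So Corollary~\ref{cor: ballisticity} applies with $v_\star\in(d,v^-(\rho))$, and the cones can be taken with slope $\bar v\in(d,v_\star)$. Symmetrically, $v^+(\rho)<d$ for $\rho<\rho_0$, so only $\rho_0$ remains.

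\textbf{Smaller remarks.} The mechanism of Section~\ref{sec:onedensity} is not lateral decoupling but record times plus an influence field. There the particle dynamics enter only through an Azuma bound on a particle's displacement, whose increments are bounded by $1$ with mean $d_i$. This settles your worry that only the common drift matters.

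In the first part, Theorem~\ref{thm:moregeneral} cannot be applied verbatim to $\sum_i\eta^{(i)}$. When the $(\alpha_i,q_i)$ differ, the law $\mathbf{P}^{\eta_0}$ in \ref{pe:markov} is not determined by total occupation numbers. You must therefore work with:
\begin{itemize}
\item the type-labelled process $(\eta^{(1)},\dots,\eta^{(N)})$;
\item componentwise domination, to which Lemma~\ref{lem:monotone} still applies because the arrows only see whether some type is present;
\item the reformulated sprinkler \eqref{eq:penacelleNEW-N}, which requires an extra particle of every type.
\end{itemize}
You then re-check the proofs of Sections~\ref{sec:couplings}--\ref{sec:speed}, and the strict monotonicity argument of \cite{CKKR24}, rather than invoking the theorem as a black box. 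This is what the paper does.
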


\subsection{Proof structure}


\noindent \textbf{Proof of Theorem \ref{thm:moregeneral}.} 
We want to use a finite-range approximation of our random walk where we refresh the environment after a fixed time $L$ (and multiples thereof) by resampling from the initial distribution, independently of the environment's history up to time $L$ (as in~\cite{CKKR24}, itself inspired from similar a framework to tame long-range correlations, see e.g.~\cite{PhasetransGFF,SharpnessGFF,RI-III}). We will split our proof into two parts: finding a candidate limit $v(\rho)$ for the speed and then proving that we do indeed have that $X^\rho_n/n \rightarrow v(\rho)$ almost surely. We will informally write $X^{\rho}$ for the random walk in environment with density $\rho$, and $X^{\rho,L}$ when that environment is resampled every $L$ timesteps.

The finite-range models yield a natural prospect for the speed: for all $\rho>0$ and $L\geq 1$, if $X$ is a random walk under the $L$-range environment, one clearly has the a.s. convergence $\lim_{n \rightarrow \infty}\frac{X^{\rho,L}_n}{n}\underset{n\rightarrow\infty}{\longrightarrow}v(\rho,L)$ for an ad hoc constant $v(\rho,L)$, as the trajectory of $X$ consists of i.i.d. segments of $L$ steps. 
We want to consider $\lim_{L \rightarrow \infty} v(\rho,L)$ as our candidate, but it is not a priori evident that this limit exists. 
\\
In order to prove that it does, we need to compare different finite-ranged approximations by gradually sending $L$ to infinity. 
We can do this at the cost of slightly changing the density of the environment (Proposition~\ref{prop: many to one coupling}, itself a generalisation of~\cite[Lemma 4.1]{CKKR24}, illustrated in Figure~\ref{fig:many to one proof}): we can couple two environments $\eta,\eta'$ of densities say $\rho>\rho'$, and which are refreshed after times $L,L'$. For the first $\min(L,L')$ timesteps, we can have $\eta \succcurlyeq\eta'$ by~\ref{pe:monotonicity} (and readily couple walkers $X^{\rho,L},X^{\rho',L'}$ on these environments so that $X^{\rho',L'}$ does not overtake $X^{\rho,L}$ as $p_\bullet \geq p_\circ$ - see Proposition~\ref{lem:monotone} for generic deterministic monotonicity statements). The resampling at time $\min(L,L')$ of only one of the two environments makes us temporarily lose this domination. The ability to recover it (in a time typically $<<\min(L,L')$, so that the uncontrolled trajectory of the walkers in that interval does not affect the drift of $X^{\rho,L},X^{\rho',L'}$ at first order) is encapsulated in \ref{pe:couplings}. 
In substance, Proposition \ref{prop: many to one coupling} establishes that
\begin{equation} \label{eq: simplified many to one}
    v(\rho, L) \leq v\left(\rho + \beta(L, L'), L'\right) + \alpha(L,L')
\end{equation}
where $\alpha(L,L'), \beta(L, L') > 0$ converge to $0$ conveniently fast as $L, L' \rightarrow \infty$. 
\\
By repeatedly using \eqref{eq: simplified many to one} dyadically over $L,L'=2L,L''=4L...$, we can prove that
\begin{equation}\label{eq: limsup liminf}
    \limsup_{L \rightarrow \infty} v(\rho, L) \leq \liminf_{L \rightarrow \infty} v(\rho + \varepsilon, L)
\end{equation}
for all $\rho, \varepsilon > 0$. 
Once we have this, using that the ratio $X_n/n$ must always be in the interval $[-1,1]$, 
we can safely conclude that $\lim_{L \rightarrow \infty} v(\rho, L)$  must exist for all but countably many $\rho$, and we write $v(\rho)$ for the limit. 
\\

The second part of the proof is to establish the a.s. convergence $\frac{X^\rho_n}{n}\rightarrow v(\rho)$. 
To this end, we show that for any fixed $\delta>0$, the probabilities that the observed speed of the walk $\frac{X^\rho_n}{n}$ is not within distance $\delta$ of $v(\rho)$ are summable in $n$. This happens in the following three steps (say for the upper bound, the lower bound being similar): 
\begin{itemize}
    \item We compare $X^\rho_n$ to $X^{\rho + \varepsilon, L}_n$, where $\varepsilon$ is chosen such that ${v}(\rho + \varepsilon)$ exists, and small enough so that ${v}(\rho + \varepsilon) < {v}(\rho) + \delta/6$ (which is possible by the first part of the proof). Here, we take $n,L>>1$ so that we can use 
    Proposition \ref{prop: many to one coupling}. 
    Note that $X^{\rho}$ and $X^{\rho, n}$ coincide on the time-interval $[0,n]$. 
    \item We then compare $\frac{X^{\rho + \varepsilon, L}_n}{n}$ to its expectation (which is close to its speed $v(\rho+\varepsilon,L)$) using classical concentration bounds, taking advantage of the i.i.d. characteristics of the finite-ranged model. This requires to take $n >> L$ in an appropriate way (e.g. $L=\lfloor \sqrt{n}\rfloor$). 
    \item We conclude by the fact that $v(\rho+\varepsilon,L)\rightarrow v(\rho+\varepsilon)$ and our choice of $\varepsilon$.
\end{itemize}

\noindent \textbf{Proof of Theorem \ref{thm:main}.}
Here, we focus specifically on the APCRW environment. We first need to check that it satisfies the conditions for Theorem \ref{thm:moregeneral}, which we do in Appendix \ref{sec:couplingproofs}. The key technical input is to control the joint distribution of a cloud of independent walkers starting from given positions, and after a certain amount of time, via the soft local times framework~\cite{HHSST15,PopovTeixeira}, in Lemma~\ref{Lemma: Soft Local Times}. We need in passing to extend heat kernel bounds for the simple random walk to the asymmetric one (Lemma~\ref{Lemma: Heat Kernel}). 
Applying then Theorem \ref{thm:moregeneral}, what remains is to effectively fill in the at most countably many gaps of density, leaving at most one.
\\
This density will be the unique $\rho_0$ such that $v(\rho_0)$ (if it exists) would coincide with $(2q-1)\alpha$, the speed of the particles in the environment. Note that $\rho_0$ is unique indeed as $v$ is increasing by Theorem~\ref{thm:moregeneral}.
\\
The reason is that outside of $\rho_0$, we expect the random walk $X$ to travel at a significantly different speed from the particles of the environment, hence regularly refreshing its surroundings, which then allows us to implement a renewal structure. Namely, when picking a density away from $\rho_0$, say wlog $\rho >\rho_0$, then by \Cref{thm:moregeneral}, we can find $\rho'\in (\rho_0,\rho)$ such that $v({\rho'})>(2q-1)\alpha$ and $X^\rho$ dominates stochastically $X^{\rho'}$, itself faster than the environment.
\\
This ballistic separation is formalised in Corollary~\ref{cor: ballisticity}, which in the current context translates as
\begin{equation}\label{eq: ballisticity req}
\dP \left( \exists n \in \dN : X^\rho_n < n v(\rho') - K \right) \leq \exp\left( -K^{c} \right)
\end{equation}
for all $K$ large enough, and an explicit constant $c>0$. Informally, this means that up to a lag with subpolynomial tails, the random walk will go faster than the particles surrounding it (which travel at speed $(2q-1)\alpha <v(\rho')$, with large deviation controls). To construct a renewal time, we wait for $X$ to make a large number of consecutive steps to the right (up to logarithmic in time is likely to happen, by uniform ellipticity), to offset the aforementioned lag. To handle the technicalities, we can transpose the renewal structure from~\cite[Section 4]{HHSST15} for the PCRW.
\\
We insist that the ballisticity requirement~\eqref{eq: ballisticity req} (playing the role of (1.7) in \cite{HHSST15}) is what prevents us from directly building this renewal structure without first proving Theorem \ref{thm:moregeneral}.
\\

\noindent\textbf{Proof of Corollary~\ref{cor:APCRWsum}.} This follows from the previous two theorems and the remark that our conditions~\ref{pe:markov}-\ref{pe:density} and~\ref{pe:densitychange}-\ref{pe:nacelle} hold for any superposition of independent environments satisfying those (Proposition~\ref{prop: apcrw superposition}), with harmless changes to constants. One can then check readily that our arguments in the proof of Theorem~\ref{thm:moregeneral} (as well as those of~\cite{CKKR24} used for the monotonicity of $v$) still apply. In the case that all the drifts $d_i$ are equal, one checks easily that the renewal structure used for Theorem~\ref{thm:main} is still relevant.

\subsection{Related works and discussion}\label{subsec:discussion}
The area of random walks on dynamic environments, especially particle systems, has been very active over the past ten to fifteen years. 
\\
\textbf{Relaxing the conditions on the environment.} Until recently, many works dealt with environments that were more amenable to renewal structures than clouds of random walks: non-conservative systems (such as the contact process~\cite{Allasia,MountfordVares}), or more generally better mixing conditions~\cite{AvenaBlondelFaggionato,BaldassoFluctuation,BethuelsenVollering,BlondelHilarioTeixeira}. By contrast, most results on the SEP or the PCRW obtained so far were perturbative in at least one parameter, whether the density of particles~\cite{RWonRWslowdensity,RWonRWshighdim,HHSST15}, or their activity rate~\cite{HS15,SalviSimenhaus}. 
\\
Part of the reason was to obtain ballisticity estimates on the walker to ensure that it would travel strictly faster (or slower) than the environment~\cite{BallisticRWsLateralDecoupling}. Building on this assumption, called the \textit{non-nestling} case, renewal structures (e.g. in~\cite{HHSST15} for the PCRW, in~\cite{HS15} for the SEP) would imply the law of large numbers. We stress that this ballisticity condition is in general highly non-trivial to obtain. For instance, the assumption (2.8) in~\cite{BallisticRWsLateralDecoupling} corresponds to our Corollary~\ref{cor: ballisticity}, for which we need to deploy the full machinery from the proof of Theorem~\ref{thm:moregeneral} described in the previous section. 
\\
The purpose of the current work is also to enhance the relevance of the general approach of~\cite{CKKR24}, which treated any environment satisfying the mild regularity and mixing conditions~\ref{pe:markov}-\ref{pe:density} and~\ref{pe:densitychange}-\ref{pe:nacelle}, via approximation schemes with finite-range models. In~\cite{CKKR24}, it was shown that $v$ is monotonic in $\rho$, provided it existed. We complement this by establishing the existence of $v$, which was so far only done for the SEP and PCRW, via very different methods~\cite{HKT20}. While one may hope to adapt this latter work for the APCRW (turning the lateral decoupling of their section 4 into a 'diagonal decoupling' along the drift of the environment particles - which was 0 for the PCRW), this is much less clear for instance in the case of a mixture of APCRWs with different drifts, while our method adapts seamlessly (up to sacrificing a null measure set of densities) in Corollary~\ref{cor:APCRWsum}. 
\\
\textbf{The speed picture.} Going back to the case of a single drift for the APCRW, the picture we obtain for $\rho \mapsto v(\rho)$ is overall the same as that derived for the PCRW/SEP when combining~\cite{CKKR24}-\cite{HKT20}. Namely, $v$ is increasing (for $p_\bullet>p_\circ$ and defined everywhere except possibly when crossing the horizontal line $y=(2q-1)\alpha$. A natural question is whether $v$ is continuous. This seems achievable by now standard means outside of the critical point (e.g. via the aforementioned renewal strategies or renormalising structures taking advantage of the speed difference between the walker and the particles). Whether $v$ is continuous, or even defined at the critical point is a much more delicate matter. The exploitation of monotonicity to generate regularity in a `boomerang' way as we do in this work does not appear yet to bring the key to this lock. 
\\
\textbf{Other environments: towards the ASEP.} Finally, we make here an important step towards establishing an analogous LLN for the random walk driven by the ASEP in the general case (the non-nestling case being treated in~\cite{BallisticRWsLateralDecoupling}): if we can show that it satisfies our conditions, this will now automatically entail the existence of $v$. It is however far from immediate to show density preservation and coupling conditions such as~\ref{pe:densitychange} and~\ref{pe:couplings} for the ASEP, which is a noticeably more complicated object than a 'SEP+drift'. In particular, it is non-reversible and does not enjoy a more workable reformulation as an interchange process. In addition, reducing the exceptional densities from countably many to at most one (or two) is much less clear: a renewal structure as in our Section~\ref{sec:onedensity} would be more delicate to implement if the walker is slower than the particles, as their interdependencies a priori extend the range of correlated space-time zones. 

\subsection{Plan of the paper}
In Section~\ref{sec:defs}, we define the generic class of dynamic environments we will be working with as well as the corresponding random walk. We state the properties~\ref{pe:markov}-~\ref{pe:density} and the coupling conditions~\ref{pe:densitychange}-\ref{pe:nacelle} for our environments, which are established in Appendix~\ref{sec:couplingproofs} for the APCRW. We also formalise the superposition of environments (with Corollary~\ref{cor:APCRWsum} in mind), and give elementary monotonicity properties that will be useful throughout the paper. In Section~\ref{sec:couplings}, we introduce the finite range model and establish Proposition~\ref{prop: many to one coupling}. In  Section~\ref{sec:speed}, we prove Theorem \ref{thm:moregeneral}, and in Section~\ref{sec:onedensity}, we prove Theorem~\ref{thm:main} and Corollary~\ref{cor:APCRWsum}. 

\subsection{Notation} \label{subsec:not} 
For two integers $a\leq b$, we will denote by $[a,b]$ the set of integers $\{a,\ldots, b\}$ and declare that the length of $[a,b]$ is its number of elements, that is $b-a+1$. Non-numbered constants such as $c,c',\dots$ and $C,C',\dots$ are generic, purely numerical and can change from place to place. By contrast, numbered constants are fixed upon first appearance. We usually track their dependency in the parameters, but keep the dependency on the environment model itself (e.g. SEP, PCRW, etc.) implicit.

We will write $\geq_{\text{st.}}$ and $\le_{\text{st.}}$ the usual stochastic dominations for probability distributions.

\section{General model: conditions and basic properties}\label{sec:defs}
We describe the class of environments we will be working with in Section~\ref{subsec:DefRWRDREgeneral}, including the properties \ref{pe:markov}-\ref{pe:density}, and define the different annealed/quenched probabilities we will be using. 

\subsection{Definition of the general model}\label{subsec:DefRWRDREgeneral}
\textbf{{A class of dynamic random environments.}}
\\
The environments we consider are Markov processes taking values in $ \Sigma: = (\mathbb{Z}^+_0)^{\mathbb{Z}}$. 
For $I\subseteq \mathbb{Z}$ and $\eta \in \Sigma$, we write $\eta\vert_I$ for $\eta$ restricted to $I$, and when $I$ is finite, we set $\eta(I):=\sum_{x\in I}\eta(x)$. 
For $\eta, \eta' \in \Sigma$ and $I$ as before, we write  $\eta\vert_I\preccurlyeq \eta'\vert_I$ (or $\eta'\vert_I\succcurlyeq \eta\vert_I$) whenever $\eta(x)\le \eta'(x)$ for all $x \in I$ (we simply write $\eta\preccurlyeq \eta'$ or $\eta'\succcurlyeq \eta$ when $I=\mathbb{Z}$). 

An \textit{environment} is given by a non-empty bounded open interval $J\subseteq \mathbb{R_+}$ 
and two families of probability measures: $({\mu_\rho} : \rho \in J)$, where $\mu_\rho$ is a measure on $\Sigma$, and $(\mathbf{P}^{\eta_0}: \eta_0 \in \Sigma)$. It is required to satisfy the following conditions, which are the same as in~\cite{CKKR24} (the only difference being the removal of the symmetry condition in~\ref{pe:markov}).
\begin{align}
&\tag{P.1}\label{pe:markov} \text{\parbox{14cm}{\textit{(Markov property and invariance).} For every $\eta_0\in \Sigma$, the process $(\eta_t)_{t \geq 0}$ defined under $\mathbf{P}^{\eta_0}$ is a time-homogeneous Markov process, such that for all $s\geq 0$ and $x\in \mathbb{Z}$, the process $(\eta_{s+t}(x+\cdot))_{t\geq 0}$ has law $\mathbf{P}^{\eta_s(x+\cdot)}$.
}}
\end{align}
\begin{align}
&\tag{P.2} \label{pe:stationary} \text{\parbox{14cm}{\textit{(Stationary measure).} The initial distribution ${\mu}_\rho$ is a stationary distribution for the Markov process $\mathbf{P}^{\eta_0}$. More precisely, letting $${\mathbf{P}}^{\rho}= \int {\mu}_{\rho}(d\eta_0) \mathbf{P}^{\eta_0}, \quad \rho\in J,$$ 
the ${\mathbf{P}}^\rho$-law of $(\eta_{t+s})_{t\ge0}$ is identical to the ${\mathbf{P}}^\rho$-law of $(\eta_{t})_{t\ge0}$ for all $s\geq 0$ and $\rho \in J$. In particular, the marginal law of $\eta_t$ under ${\mathbf{P}}^\rho$ is ${\mu}_\rho$ for all $t \geq 0$. 
}}\\[0.5em]
&\tag{P.3} \label{pe:monotonicity} \text{\parbox{14cm}{\textit{(Monotonicity).}  The following stochastic dominations hold:
\begin{itemize}
\item[i)] (quenched) For all $\eta'_0 \preccurlyeq \eta_0$, one has that $\mathbf{P}^{\eta'_0} \leq_{\text{st.}}\mathbf{P}^{\eta_0}$ , i.e.~there exists a coupling of $(\eta'_t)_{t \geq0}$ and $(\eta_t)_{t \geq 0}$ such that  $\eta'_t\preccurlyeq  \eta_t$ for all $t\geq 0$. 
 \item[ii)] (annealed) For all $\rho'\leq \rho$, one has that ${\mu}_{\rho'} \leq_{\text{st.}}{\mu}_{\rho}$. Together with i), this implies that ${\mathbf{P}}^{\rho'} \leq_{\text{st.}}{\mathbf{P}}^{\rho}$.
\end{itemize}
}}\\[0.5em]
&\tag{P.4} \label{pe:density} \text{\parbox{14cm}{\textit{(Density at stationarity).} There exists a constant $\Cl[c]{densitydev} >0$  such that for all $\rho\in J$, for all $\varepsilon \in (0,1)$ and every positive integer $\ell$, 
$$
\mathbf{P}^\rho(\{\eta_0\in \Sigma: \,  \vert\eta_0([0,\ell-1 ])-\rho\ell\vert \geq \varepsilon \ell \})\leq 2\exp(-\Cr{densitydev}\varepsilon^2\ell).
$$}}
\end{align}



\noindent
\textbf{The random walk.}
\\
Recall that for a given realisation $\eta=(\eta_t)_{t\geq 0}$ of the environment, we define the random walk $X=(X_n)_{n\geq 0}$ as in~\eqref{e:def-RWDRE}, We now allow it to start from any initial space-time position $z=(x,m)\in \mathbb{Z}\times \mathbb{Z}^+_0$ 
In words, when $X_n$ is on an occupied (resp.~empty) site of $\eta_n$,  it jumps to its right neighbour with probability $p_{\bullet}$ (resp.~$p_{\circ}$) and to its left neighbour with probability $1-p_{\bullet}$ (resp.~$1-p_{\circ})$. 

We call $P^{\eta}_z$ the \textit{quenched} law of the walk started at $z$ (and abbreviate $P^{\eta}=P^{\eta}_{(0,0)}$), in the sense that the environment $\eta$ is deterministic.
\\
When averaging over the randomness of the environment, we define two annealed measures.  Recall
that $\mathbf{P}^{\eta_0}$ denotes the law of $\eta$ with fixed initial configuration $\eta_0 \in \Sigma$ and that $\mathbf{P}^{\rho}$ describes the dynamics with random initial configuration sampled against the stationary measure $\mu_\rho$. We then let
\begin{align}
& \label{eq:RW_an}  \mathbb{P}^{\eta_0}_{z}[\, \cdot \, ]=\int \mathbf{P}^{\eta_0}(d\eta)P^{\eta}_z[\, \cdot \, ],\quad \eta_0 \in \Sigma,\\
& \label{eq:RW_ann} \mathbb{P}^{\rho}_{z}[\, \cdot \, ]=\int \mathbf{P}^{\rho}(d\eta)P^{\eta}_z[\, \cdot \, ], \quad \rho \in J, 
\end{align} 
for arbitrary $z \in  \mathbb{Z}\times \mathbb{Z}^+_0$. 
Observe that
$\mathbb{P}^{\rho}_{z} =  \int {\mu}_{\rho}(d\eta_0) \mathbb{P}^{\eta_0}_z$.  For $\star\in\{\eta,\rho\}$, write $\dP^{\star}_x$ for $\dP^{\star}_{(x,0)}$, for all $x\in \mathbb{Z}$, and write simply $\dP^\star$ for $\dP^{\star}_0$. 

\subsection{Conditions on the model}\label{subsec:conditions}

We state here the conditions from \cite{CKKR24}, with a slight modification in~\ref{pe:nacelle} to accommodate the drift of the environment. They are satisfied by the APCRW (see Proposition~\ref{prop:conditionsforAPCRW}, whose proof is postponed to the Appendix~\ref{sec:couplingproofs}). We assume that there exists a fixed constant $\nu>0$ such that the following conditions hold (as mentioned in the introduction, $\nu$ should be understood as the intrisic activity rate of the environment, which we will take $=1$ in the case of the APCRW).



\begin{enumerate}[label=(C.\arabic*) ]
\item \label{pe:densitychange} 
\textit{(Conservation of density).} There exists constants $\Cl{densitystable},\Cl[c]{densitystableexpo} \in (0,\infty)$ such that for all $\rho \in J$ and $\varepsilon \in (0,1)$ (with $\rho+\varepsilon \in J$), and for all 
$\ell, \ell', H,t\geq 1$ satisfying $H>4{\nu} t > \Cr{densitystable}\ell^2\varepsilon^{-2}(1+{ \vert\log^3 (\nu t)}\vert)$ and $\ell'\leq \sqrt{t}$, the following two inequalities hold. Let $\eta_0$ be such that on every interval $I$ of length $\ell$ included $[-H,H]$, one has $ \eta_0(I)\leq (\rho+\varepsilon)\ell$ (resp.~$ \eta_0(I)\geq (\rho-\varepsilon)\ell$).  Then
\begin{equation} \label{eq:densitychange}
{\mathbf{P}}^{\eta_0} \left(\begin{array}{c} \text{for all intervals $I'$ of length $\ell'$ included in} \\ \text{$[-H+2{\nu} t,H-2{\nu} t]$: $\eta_t(I') \leq (\rho+3\varepsilon) \ell'$}\\ \text{(resp.~$\eta_t(I')  
 \geq (\rho-3\varepsilon) \ell'$)} \end{array} \right) \geq 1 - 4  H \exp(-\Cr{densitystableexpo}\varepsilon^2\ell').
\end{equation}
\item\label{pe:compatible} \textit{(Couplings).} There exists $\Cl{compatible}, \Cl{SEPcoupling2}\in (0,\infty)$ such that the following holds. Let $\rho \in J$, $\varepsilon \in (0,1)$ (with $\rho+\varepsilon \in J$), and $H_1, H_2,t,\ell\geq 1$ be integers { such that $\min\{H_1, H_2-H_1-1\}> 10{\nu}t>4\nu\ell^{100}>\Cr{compatible}$, $\nu^{} \ell>\Cr{compatible}\varepsilon^{-2}(1+\vert\log^3(\nu\ell^4)\vert)$ and $\ell>80\nu\varepsilon^{-1}+\nu^{-2}$}. Let $\eta_0,\eta'_0\in \Sigma$ be such that $\eta_0\vert_{[-H_1, H_1]}\succcurlyeq \eta_{0}'\vert_{[-H_1, H_1]}$ and such that for every interval $I\subseteq [-H_2,H_2]$ of length $\lfloor \ell/2 \rfloor \leq |I| \leq \ell$ , we have $\eta_0(I)\geq (\rho+3\varepsilon/4)\vert I \vert$ and $\eta'_0(I)\leq (\rho+\varepsilon/4)\vert I \vert$. Then there exists a coupling $\dQ$ of two environments $\eta,\eta'$ with respective marginals $\mathbf{P}^{\eta_0},\mathbf{P}^{\eta'_0}$ such that 
\begin{equation}\label{eq:compatible1}
\dQ\big(\forall s\in [0,t], \, \eta_s\vert_{[-H_1+4{\nu}t, H_1-4{\nu}t]}\succcurlyeq \eta'_s\vert_{[-H_1+4{\nu}t, H_1-4{\nu}t]} \big)\geq 1 - 20t\exp (-{\nu}t/4)
\end{equation}
and 
\begin{equation}\label{eq:compatible2}
\dQ\big(\eta_t \vert_{[-H_2+6{\nu}t, H_2-6{\nu}t]} \succcurlyeq \eta'_t\vert_{[-H_2+6{\nu}t, H_2-6{\nu}t]} \big)\geq 1 -5\Cr{SEPcoupling2} \ell^4 H_2\exp\left(- \Cr{SEPcoupling2}^{-1}\textstyle\frac{\nu}{\nu+1}\varepsilon^2\ell\right).
\end{equation}
\end{enumerate}


The next two coupling conditions are each part of \ref{pe:compatible}.
The first of the two, \ref{pe:drift}, allows us to take a pair of environments where one covers the other over a given interval and maintains this monotonicity in the environments, losing a little off the ends of the interval (to account for external particles potentially drifting in). 
By covering, we mean that every site at which the lower density environment has a particle, the higher density environment will have at least as many particles there.

The second, \ref{pe:couplings}, allows us to take two environments where one has a slightly higher empirical density than the other in an area and couple them such that after a relatively short time, the environment with the higher empirical density covers the environment with the lower empirical density (in that area). 

When we combine these in \ref{pe:compatible}, we maintain an interval where one environment covers the other in the centre (as in \ref{pe:drift}), while expanding this domination on neighbouring intervals (as in \ref{pe:couplings}).

\begin{enumerate}[label=(C.2.\arabic*)]
\item\label{pe:drift} \textit{(No particle drifting in from the side).} 
Let $t,H \geq 0$ and $k\geq 1$ be integers, and let $\eta_0,\eta'_0\in \Sigma$ be such that $\eta_0 \vert_{[-H, H]}\succcurlyeq \eta_0' \vert_{[-H, H]}$.   
There exists a coupling $\dQ$ of environments $\eta,\eta'$ with respective marginals $\mathbf{P}^{\eta_0}$ and $\mathbf{P}^{\eta'_0}$ such that
\begin{equation}\label{eq:SEPdriftdeviations}
\mathbb{Q}\big(\forall s\in [0,t], \, \eta_s\vert_{[-H+2{\nu}kt, H-2{\nu}kt]}\succcurlyeq\eta'_s\vert_{[-H+2{\nu}kt, H-2{\nu}kt]} \big)\geq 1- 20\exp(-k{\nu} t/4).
\end{equation}

\item\label{pe:couplings} \textit{(Covering $\eta'$ by $\eta$).}
There exists $\Cl{SEPcoupling} >0$ such that for all $\rho\in J$ and $\varepsilon\in (0,1)$ (with $\rho+\varepsilon \in J$), the following holds. If  $H,t\geq 1$ satisfy {$H>4{\nu} t$, $\nu^8t>1$ and $\nu t^{1/4} >\Cr{SEPcoupling}\varepsilon^{-2}(1+\vert \log^{3}(\nu t) \vert)$}, then for all $\eta_0,\eta'_0\in \Sigma$ such that on each interval $I \subset [-H,H]$ of length $\lfloor \ell/2 \rfloor \leq |I| \leq \ell$, where $\ell:=\lfloor t^{1/4}\rfloor$, $\eta_0(I)\geq (\rho+3\varepsilon/4)|I|$ and $ \eta'_0(I) \leq (\rho+\varepsilon/4)|I|$, there exists a coupling $\mathbb{Q}$ of $\eta,\eta'$  with marginals $\mathbf{P}^{\eta_0}, \mathbf{P}^{\eta'_0}$ so that \begin{equation}\label{eq:doublecoupleSEP}
\mathbb{Q}( \eta_t \vert _{[-H+4{\nu} t, H-4{\nu} t]}\succcurlyeq \eta_t' \vert _{[-H+4{\nu} t, H- 4{\nu} t]})\geq 1- \Cr{SEPcoupling2} tH\exp\big(-(\Cr{SEPcoupling2}(1+\nu^{-1}))^{-1}\varepsilon^2t^{1/4} \big).
\end{equation}

\end{enumerate}


The third of the coupling conditions ensures that when we slightly increase the density of an environment, a random walker is able to encounter points at which the environment is empty for the lower density environment, but occupied for the higher density environment. We will then be able to separate two random walks under either environment at that point. For simplicity's sake, we are less explicit and more general than~\cite{CKKR24} in tracking the interdependency of the constants in this condition, and we will readily check that the arguments we use from~\cite{CKKR24} for the strict monotonicity of $v$ still hold with this version of~\ref{pe:nacelle}. 

\begin{enumerate}[label=(C.\arabic*) ]
\setcounter{enumi}{2}
\item\label{pe:nacelle} \textit{(Sprinkler).} 
There exist positive constants $\Cl{sprinkler1}=\Cr{sprinkler1}(\nu), \Cl{sprinkler2}=\Cr{sprinkler2}(\Cr{sprinkler1}, J, p_\circ, p_\bullet)$ such that the following holds. For all $\rho \in J$, for all integers $H, \ell,k\geq 1$ with $H\geq 2\nu\ell k$ and 
$k \geq \Cr{sprinkler2}$, the following holds. Let $I= [0,\ell]$. If $\eta_0,\eta'_0 \in \Sigma$ satisfy $\eta_0(x)\geq \eta'_0(x)$ for all $x\in [-H,H]$, $ \eta_0(I)\geq \eta'_0(I)+1$, and $\eta'_0([-3\ell+1, 3\ell])\leq 6(\rho+1)\ell$,  then for any $x \in \{0, 1\},$  there is a coupling $\mathbb{Q}$ of  $\eta'$ under ${\mathbf{P}}^{\eta_0'}$ and $\eta$ under ${\mathbf{P}}^{\eta_0}$ such that, with 
$
\delta = \Cr{sprinkler1} ^{6(\rho+1)\ell}, 
$
\begin{equation}\label{eq:penacelleNEW}
\begin{split}
&\mathbb{Q}(\eta_{\ell}(x )>0  , \,    \eta'_{\ell}( x )=0)\geq 2\delta,
\end{split}
\end{equation}
and, writing $\delta'= \delta (p_{\circ}(1-p_{\bullet}))^{6(\rho+1)\ell}$,
\begin{equation}\label{eq:penacelleNEW2}
\begin{split}
\mathbb{Q}(\{\forall s\in [0, \ell],  \, \eta_s\vert_{[-H+2{\nu}k\ell, H-2{\nu}k\ell]}\succcurlyeq\eta'_s\vert_{[-H+2{\nu}k\ell, H-2{\nu}k\ell]}\}^c)&\leq 20e^{-k\nu \ell/4}
 \leq \delta'.
\end{split}
\end{equation}
\end{enumerate}

\subsection{Superposition of environments}\label{subsec:superpos}



We will now define the generic superposition of $N\in \mathbb{N}$ environments (that we apply to the APCRW in Corollary \ref{cor:APCRWsum}), and adjust the conditions \ref{pe:markov}-\ref{pe:density} and \ref{pe:densitychange}-\ref{pe:nacelle} accordingly.
\\
We set $J = (a_1, b_1)\times (a_2, b_2) \times \dots \times (a_N, b_N) \subset \dR^n_+$, with $0< a_i < b_i$ for all $i$. For each $i$, let $\eta^{(i)}$ be an environment with associated invariant measures $({\mu^{(i)}_{\rho_i}} : \rho_i \in (a_i,b_i))$ on $\Sigma$ and evolution measures $(\mathbf{P}^{(i),\eta^{(i)}_0}: \eta^{(i)}_0 \in \Sigma)$. 
For every $\rho = (\rho_1, \rho_2, \dots, \rho_N) \in J$, we define the superposition $\eta=\left(\eta^{(1)}, \eta^{(2)}, \dots, \eta^{(N)}\right)$ pointwise, i.e. 
\begin{equation} \label{eq: envt superposition def}
    \eta_t(x)=\left(\eta^{(1)}_t(x), \eta^{(2)}_t(x), \dots, \eta^{(N)}_t(x)\right)
\end{equation} 
for every time $t\geq 0$ and position $x\in \mathbb{Z}$. Hence $\eta$ is a Markov process taking values in $\Sigma^{(N)} := \left(\left(\dZ^+_0\right)^N\right)^{\dZ}$, which can be thought as having $\eta^{(i)}_t(x)$ particles of type $i$ at $(x,t)$. 
Given this environment, we define the random walk as in Section \ref{sec: model}, with $\left\{\eta_{n+m}(X_n) = 0\right\}$ in  \eqref{e:def-RWDRE} being replaced by $\left\{\eta^{(i)}_{n+m}(X_n) = 0\right\}$ for all $1\leq i \leq N$.
In other words, the random walker uses jump probabilities $\pfull$ and $1-\pfull$ if any $\eta^{(i)}$ is occupied at its current space-time position, and $\pempty$ and $1-\pempty$ otherwise. 
\\
\\
Recall that as \textit{environments}, each $\eta^{(i)}$ for $1\leq i \leq N$ follows \ref{pe:markov}-\ref{pe:density}. Note that we can pick $\Cr{densitydev}>0$ such that the inequality in \ref{pe:density} holds simultaneously for every $\eta^{(i)}$. We now state an adapted version of the conditions~\ref{pe:densitychange}-\ref{pe:nacelle} for the superposed environment $\eta$.
\\ 
For each of~\ref{pe:densitychange},~\ref{pe:compatible},~\ref{pe:drift} and~\ref{pe:couplings}, we will require the condition to hold for every $\eta^{(i)}$, with bespoke uniformisation of the constants over $1\leq i \leq N$. 
\\
For~\ref{pe:nacelle}, we require $H\geq 2\nu_i \ell k$, $\eta^{(i)}_0(x)\geq \eta'^{(i)}_0(x)$ for $x \in [-H,H]$ and  $\eta'^{(i)}_0([-3\ell + 1, 3\ell]) \leq 6(\rho_i+1)\ell$ for all $i$. Additionally, we require  $\eta^{(i)}_0 \geq \eta'^{(i)}_0(I) + 1$ for every $1 \leq i \leq N$. 
We update \eqref{eq:penacelleNEW} to 
\begin{equation}\label{eq:penacelleNEW-N}
  \mathbb{Q}\left( \left\{ \exists i \in \{1, \dots, N\} : \eta_\ell^{(i)}(x)>0 \right\}\bigcap \left\{ \forall i \in \{1, \dots, N\},\eta'^{(i)}_\ell(x)=0 \right\}\right)\geq 2 \delta  
\end{equation}
with $\delta = \Cr{sprinkler1}^{6 \left(\sum_{i = 1}^N \left(\rho_i +1\right)\right)\ell}$, and~\eqref{eq:penacelleNEW2} to 
\begin{equation}\label{eq:penacelleNEW2-N}
\begin{split}
\mathbb{Q}\left(\bigcup_{i=1}^N\{\forall s\in [0, \ell],  \, \eta_s\vert_{[-H+2{\nu_i}k\ell, H-2{\nu_i}k\ell]}\succcurlyeq\eta'_s\vert_{[-H+2{\nu_i}k\ell, H-2{\nu_i}k\ell]}\}^c\right)&\leq 20Ne^{-k\min_{i}\nu_i \ell/4}
 \\
 &\leq \delta',
\end{split}
\end{equation}
keeping $\delta'= \delta (p_{\circ}(1-p_{\bullet}))^{6(\rho+1)\ell}$. 

This modification of~\ref{pe:nacelle} is tailored for the case when the $\eta^{(i)}$ are independent, as per the statement below. 
\begin{proposition} \label{prop: apcrw superposition} \label{prop: envt superposition}
    Let $\eta^{(1)}, \eta^{(2)}, \dots, \eta^{(N)}$ be independent environments in the sense of Sections \ref{subsec:DefRWRDREgeneral} and \ref{subsec:conditions}, such that~\ref{pe:nacelle} holds for each of them. Then~$\eta=(\eta^{(1)}, \ldots, \eta^{(N)})$ satisfies the new version of~\ref{pe:nacelle} for superposed environments. 
\end{proposition}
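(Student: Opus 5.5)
The plan is to build the coupling for the superposition as a product of the single-type couplings from~\ref{pe:nacelle}, and then to check the two new inequalities separately.

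\textbf{Setup.} By independence, both $\mathbf{P}^{\eta_0}$ and $\mathbf{P}^{\eta'_0}$ for the superposed environment are product laws $\bigotimes_i \mathbf{P}^{(i),\eta^{(i)}_0}$ (respectively with $\eta'^{(i)}_0$). For each $i$, the hypotheses of the superposed version of~\ref{pe:nacelle} give exactly the hypotheses of~\ref{pe:nacelle} for $\eta^{(i)}$:
\begin{itemize}
\item $H\geq 2\nu_i\ell k$;
\item domination $\eta^{(i)}_0\succcurlyeq\eta'^{(i)}_0$ on $[-H,H]$;
\item $\eta^{(i)}_0(I)\geq \eta'^{(i)}_0(I)+1$;
\item the density bound $\eta'^{(i)}_0([-3\ell+1,3\ell])\leq 6(\rho_i+1)\ell$.
\end{itemize}
Taking $k\geq \max_i \Cr{sprinkler2}^{(i)}$, this yields a coupling $\mathbb{Q}^{(i)}$ of $(\eta^{(i)},\eta'^{(i)})$. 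I set $\mathbb{Q}=\bigotimes_{i=1}^N \mathbb{Q}^{(i)}$. Its marginals are the correct product laws.

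\textbf{Sprinkler inequality~\eqref{eq:penacelleNEW-N}.} The event in~\eqref{eq:penacelleNEW-N} contains the intersection
\[
\{\eta^{(1)}_\ell(x)>0,\ \eta'^{(1)}_\ell(x)=0\}\cap\bigcap_{i\ge 2}\{\eta^{(i)}_\ell(x)>0,\ \eta'^{(i)}_\ell(x)=0\}.
\]
Each of these events has $\mathbb{Q}^{(i)}$-probability at least $2\delta_i$, where $\delta_i=\Cr{sprinkler1}^{6(\rho_i+1)\ell}$, and $\Cr{sprinkler1}$ is uniformised over $i$ (taking the minimum, which is at most $1$). By independence, the probability of the intersection is at least $\prod_i 2\delta_i\geq 2\prod_i\delta_i=2\delta$, with $\delta=\Cr{sprinkler1}^{6\sum_i(\rho_i+1)\ell}$. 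One needs $2^N\ge 2$, which holds.

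\textbf{Domination inequality~\eqref{eq:penacelleNEW2-N}.} I read the domination in~\eqref{eq:penacelleNEW2-N} componentwise, for type $i$. A union bound then gives failure probability at most $\sum_i 20e^{-k\nu_i\ell/4}\leq 20Ne^{-k\min_i\nu_i\ell/4}$.

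\textbf{Main obstacle.} The remaining step is the last inequality, that this is at most $\delta'=\delta(p_\circ(1-p_\bullet))^{6(\rho+1)\ell}$. Here $\delta'$ now decays like $c^{\sum_i(\rho_i+1)\ell}$ with the smaller superposed $\delta$. This does not follow from the single-type bounds $20e^{-k\nu_i\ell/4}\le\delta_i'$. Instead I would enlarge the constant: choose the superposed $\Cr{sprinkler2}$ large enough, depending on $N$, $J$, $\min_i\nu_i$, $\Cr{sprinkler1}$, $p_\circ$ and $p_\bullet$, so that
\[
k\min_i\nu_i/4\ \ge\ \log(20N)+6\Big(\sum_i(\rho_i+1)\Big)\big(|\log\Cr{sprinkler1}|+|\log(p_\circ(1-p_\bullet))|\big).
\]
This is possible because $J$ is bounded, so the $\rho_i$ are bounded. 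This is exactly the harmless change of constants allowed in the statement, which lets $\Cr{sprinkler2}$ depend on $J$.
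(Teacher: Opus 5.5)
Your proof is correct and follows the paper's argument. You take the product of the single-type couplings, bound \eqref{eq:penacelleNEW-N} by the intersection of the single-type events together with independence, and get \eqref{eq:penacelleNEW2-N} from a union bound plus an enlarged threshold on $k$. Your threshold includes the logarithms of the sprinkler constant and of $p_\circ(1-p_\bullet)$, and is more careful than the paper's stated $4(\log(20N)+6\sum_i(b_i+1))/\min_i\nu_i$, which omits these factors and on its own does not guarantee the bound by $\delta'$.
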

The proof is straightforward, noticing for $E$ being the event in~\eqref{eq:penacelleNEW-N} that $E$ contains the intersection of $\left\{\eta^{(i)}_\ell(x) >0, \eta'^{(i)}_\ell(x) = 0\right\}$ for all $1\leq i \leq N$, which is the corresponding event in~\eqref{eq:penacelleNEW} for every $\eta^{(i)}$. For~\eqref{eq:penacelleNEW2-N}, the first inequality holds by a union bound, and the second is satisfied as soon as $\Cr{sprinkler2}$ (and thus $k$) is larger than $4(\log (20N)+6\sum_{i}(b_i+1))/\min_i\nu_i$. 
Details are left to the reader. 


\subsection{Elementary monotonicity couplings}
We describe a joint construction for the walk $X$ when started at different space-time points, which is given in \cite[Section 2]{CKKR24}, and is itself a simplified version of the construction in in \cite[Section 3]{HKT20}. The aim  of this section is to state the generic weak monotonicity coupling in the Lemma~\ref{lem:monotone} below.

For a point $w=(x,n)\in\mathbb{Z} \times \mathbb{Z}^+_0$, we let $\pi_1(w)=x$ and $\pi_2(w)=n$ denote the projection onto the first (spatial) and second (temporal) coordinate. We define the discrete lattice
\begin{equation}
\label{e:def_space_time}
\mathbb{L}:= (2\mathbb{Z} \times 2\mathbb{Z}^+_0) \cup  \big( (1,1)+(2\mathbb{Z}  \times 2\mathbb{Z}^+_0)\big).
\end{equation}
For parity reasons, if $X=(X_n)_{n \geq 0}$ is started at $z=(0,0)$ under the quenched measure $P_z^{\eta}$ or any of the annealed measures at \eqref{eq:RW_an}-\eqref{eq:RW_ann}, $X$ will never leave the lattice $\mathbb{L} \subset (\mathbb{Z} \times \mathbb{Z}^+_0)$. 

Fix a realisation of the environment $\eta$. We define a family of processes $(X^w=(X^w_n)_{n \geq 0} : w\in\mathbb{L})$ starting simultaneously from all points of $\mathbb{L}$, such that each of them is a random walk on the environment $\eta$, and so that any two trajectories that ever meet at a space-time point evolve together at all future times. Concretely, we set $X^w_0=\pi_1(w)$. 

We let $U= \left(U_w\right)_{w\in\mathbb{L}}$ be a collection of i.i.d.~uniform random variables on $[0,1]$. We define a field $A=(A_w)_{w\in \mathbb{L}} \in \{-1,1\}^{\mathbb{L}}$ of {\it arrows}, 
measurably in $(\eta,U )$ as follows:
\begin{align}\label{def:A}
A_w = A(\eta_n(x), U_w) =2\times{\1}\big\{ U_{w} \leq (p_{\circ}-p_{\bullet})\1\{ \eta_n(x)= 0 \} + p_{\bullet}\big\}-1, \quad w=(x,n)\in\mathbb{L}.
\end{align}
For all  $w=(x,n)\in \mathbb{L}$, we let $X^w_0=x$ and, for all integer $ k \ge0$, we define recursively
\begin{align} \label{eq:defX}
X^w_{k+1} = X^w_k+A_{(X^w_k, \,  n+k)}.
\end{align}

By comparing \eqref{e:def-RWDRE} with \eqref{def:A}-\eqref{eq:defX}, it is straightforward that the law of $X^{(0,0)}= (X^{(0,0)}_n)_{n \geq 0}$ when averaging over $U$ is the same as that of $X$ under $P^{\eta}_{(0,0)}= P^\eta$. 

We conclude this section by recalling the useful monotonicity property~\cite[Lemma 2.2]{CKKR24} 
(see also \cite[Proposition 3.1]{HKT20}). It merely combines the facts that two trajectories on the same environment cannot cross without first meeting at a vertex, after which they merge and stay together, and that a random walk starting in some environment $\eta$ will stay to the left of a trajectory starting from the same point in an environment $\widetilde{\eta}$ such that $\widetilde{\eta}_t(x)\succcurlyeq \eta_t(x)$ at all relevant space-time points $(x,t)$. 

\begin{lemma}\emph{\cite[Lemma 2.2]{CKKR24}}\label{lem:monotone}
If $\eta,\widetilde{\eta} \in \Sigma$ and $K\subseteq \mathbb{Z}\times\mathbb{Z}^+_0$ are such that 
\begin{align}\label{domineta}
\eta_n(x)\le \widetilde{\eta}_n(x)\text{, for all } (x,n)\in\mathbb{L}\cap K,
\end{align}
then for every $w,w'\in K$ with $\pi_1(w')\le \pi_1(w)$ and $\pi_2(w)= \pi_2(w')$, and for every $n \geq 0$ such that $[\pi_1(w)-k,  \pi_1(w')+k]\times [\pi_2(w), \pi_2(w)+k] \subseteq K $ for all $0 \leq k \leq n$\color{black}, one has that
\begin{equation}\label{e:monotone1}
  X^{w'}_{n}\le \widetilde{X}^{w}_{n},
\end{equation}  
where $\widetilde{X}^w = (\widetilde{X}_n^w)_{n \geq 0}$ is defined as in \eqref{eq:defX}, with $\widetilde{\eta}$ in lieu of $\eta$ in \eqref{def:A}. 
\end{lemma}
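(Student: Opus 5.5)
The plan is to prove \eqref{e:monotone1} by induction on the time index $j\in[0,n]$. Write $w=(x_0,m)$ and $w'=(x_0',m)$ with $x_0'\le x_0$; both lie in $\mathbb{L}$, since the processes $X^{w'}$ and $\widetilde X^{w}$ are only defined for starting points in $\mathbb{L}$. Set $D_j:=\widetilde{X}^{w}_j-X^{w'}_j$, and prove the stronger statement
\[
D_j\geq 0 \quad\text{and}\quad D_j\in 2\mathbb{Z}\qquad \text{for all } 0\le j\le n .
\]

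\textbf{Parity.} Two points of $\mathbb{L}$ with the same time coordinate have spatial coordinates of the same parity, so $D_0=x_0-x_0'\in 2\mathbb{Z}_{\ge0}$. At each step both walks move by $\pm1$, so $D_{j+1}-D_j\in\{-2,0,2\}$, and $D_j$ stays even for all $j$.

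\textbf{Inductive step.} Suppose $D_j\ge 0$ for some $j<n$, and distinguish two cases.
\begin{itemize}
\item If $D_j\ge 2$, then $D_{j+1}\ge D_j-2\ge 0$ and there is nothing to prove.
\item If $D_j=0$, the two walks meet at $z:=(y,m+j)$ with $y=X^{w'}_j=\widetilde X^{w}_j$. Each walk moves at most one unit per step, so $y\ge x_0-j$ (looking at $\widetilde X^w$) and $y\le x_0'+j$ (looking at $X^{w'}$). Hence $z\in[\pi_1(w)-j,\pi_1(w')+j]\times[\pi_2(w),\pi_2(w)+j]$, which is contained in $K$ by hypothesis since $j\le n$. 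Moreover $z\in\mathbb{L}$, as it lies on the trajectory of $X^{w'}$. So \eqref{domineta} gives $\eta_{m+j}(y)\le\widetilde\eta_{m+j}(y)$.
\end{itemize}

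\textbf{Arrows at a meeting point.} In the second case, both walks read the same uniform variable $U_z$. By \eqref{def:A}, a right arrow occurs iff $U_z\le p_\bullet$ on an occupied site and iff $U_z\le p_\circ$ on an empty one. Since $p_\bullet>p_\circ$, the map $\eta\mapsto A(\eta,U_z)$ is non-decreasing in $\eta$ (it only depends on whether $\eta=0$). Therefore $\widetilde A_z\ge A_z$, hence $D_{j+1}=\widetilde A_z-A_z\in\{0,2\}$. This closes the induction, and $j=n$ gives $X^{w'}_n\le\widetilde X^{w}_n$.

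The only delicate point is the second case: one must check that every space-time point where the comparison is actually used lies in the prescribed box. Only meeting points require domination of the environments, and the box $[\pi_1(w)-k,\pi_1(w')+k]$ at height $k$ is exactly the set of positions reachable at time $k$ by both walkers. This is why the hypothesis as stated suffices, with no need for the full light cones to lie in $K$. In particular, when that interval is empty the walks cannot meet at time $k$, and the comparison there follows from parity alone.
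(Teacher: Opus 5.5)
Your proof is correct. The paper does not reprove this lemma, since it is quoted from \cite{CKKR24}. It only describes the argument as combining two facts: walks in the same environment cannot cross without meeting and then coalesce, and a walk in $\eta$ stays to the left of the walk from the same starting point in a dominating environment $\widetilde\eta$.

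You use the same two ingredients: parity forces a meeting before any crossing, and the arrow $A(\cdot,U_z)$ is monotone in the occupation at a meeting point. The difference is that you fuse them into a single induction on $D_j=\widetilde X^{w}_j-X^{w'}_j$, comparing the two walks of interest directly rather than through an intermediate walk.

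This organization is the one that matches the hypothesis as stated. A literal two-step chain such as $X^{w'}_n\le X^{w}_n\le \widetilde X^{w}_n$ would need $\eta\le\widetilde\eta$ at the possible meeting points of $X^{w}$ and $\widetilde X^{w}$. Those lie anywhere in the full light cone $[\pi_1(w)-k,\pi_1(w)+k]$ at height $k$. The box in the lemma, $[\pi_1(w)-k,\pi_1(w')+k]$, is only the intersection of the two cones. That intersection is exactly where you show the meeting points of $X^{w'}$ and $\widetilde X^{w}$ lie. As a result, your argument also uses only the top row of each box, not the whole rectangle.
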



\section{Coupling with a finite-range version of the model}\label{sec:couplings}
We introduce a finite range version of our environments in which the environment gets resampled according to $\mu_\rho$ after every time $kL$, $k\in \mathbb{N}$, independently of everything else (see~\cite[Section 3.3]{CKKR24} for details). We also state and prove a generalisation of \cite[Lemma 4.1]{CKKR24} (Proposition~\ref{prop: many to one coupling} in the following subsection), which follows from these conditions and will be instrumental in proving the law of large numbers in Section~\ref{sec:speed}.
\\
\\
Briefly, the finite range version of the model is defined as follows: for $\rho\in J$ and $L \in \mathbb{N}$,  we define a probability measure $\mathbf{P}^{\rho,L}$ (with expectation $\mathbf{E}^{\rho,L}$) on the set of environments $\eta=(\eta_t)_{t\geq 0}$ such that at every time $t$ multiple of $L$, $\eta_t$ is sampled according to ${\mu}_\rho$ (see \eqref{pe:stationary}), independently of $(\eta _s)_{0\leq s<t}$ and, given $\eta_t $, the process $(\eta_{t+s})_{0\le s<L}$ has the same distribution under $\mathbf{P}^{\rho,L}$ as $(\eta_s)_{0\le s<L}$ under ${\bf P}^{\eta_t }$. 
\\
We extend the notation to the random walker: we write $\mathbb P_{z}^{\rho,L}[\cdot] = \int \mathbf{P}^{\rho,L}(d\eta) P_z^{\eta}[\cdot]$ so that $\mathbb P^{\rho,\infty}_z=\mathbb P^\rho_z$ corresponds to the annealed law defined in~\eqref{eq:RW_ann}. We still abbreviate $\mathbb P^{\rho,L}=\mathbb P^{\rho,L}_0$.
\\
A quick inspection yields that 
\begin{equation}
\begin{split}
    &\text{for any environment satisfying~\ref{pe:markov}-\ref{pe:density} and any subset of the conditions~\ref{pe:densitychange}-\ref{pe:nacelle},} \\
    &\text{ its finite-range version satisfies the same for all $L\in \mathbb{N}$, and ${\mu}_\rho$ is still }
    \\
    &\text{an invariant measure of the finite-range version for the time evolution}.
    \end{split}
\end{equation}
\noindent
Finally, for all $\rho \in J, L\in \mathbb{N}$, define
\begin{equation} \label{eq: v(rho, L) def}
    v(\rho, L) = \dE\left[\frac{X^{\rho,L}_L}{L}\right] \,\left( =\dE\left[\frac{X^{\rho}_L}{L}\right] \right).
\end{equation}
Note that $X^{\rho,L}$ satisfies a law of large numbers with speed $v(\rho,L)$ as its trajectory can be sliced into i.i.d. blocks of length $L$,  
see~\cite[Lemma 3.2]{CKKR24} for details. 

\begin{proposition}
\label{prop: many to one coupling}
    For any environment satisfying (\ref{pe:markov})-(\ref{pe:density}), \ref{pe:drift}, and \ref{pe:couplings}, there exists $L_0\geq 1$ such that the following holds. For all integers $L > L_0$, $f(L) \in [(\log L)^{90}, L^{1/10}]$ and $n\geq 1$, for any density $\rho > f(L)^{-1/40}$, 
    there exists a coupling $\dQ_{n,L}$ of $(X^{(i)}_s)_{0 \leq s \leq n}$ for $i = 1,2$, 
    where $X^{(1)} \sim \dP^{\rho, n}$ and $X^{(2)} \sim \dP^{\rho + \varepsilon, L}$ with $\varepsilon \geq f(L)^{-1/40}$, such that 
    \begin{equation} \label{eq: many to one statement}
        \dQ_{n,L} \left( \min_{0 \leq s \leq n} \left(X^{(2)}_s - X^{(1)}_s \right) \leq - \left\lceil \frac{n}{L} -1 \right\rceil f(L) \right) 
        \leq \left( \left\lceil \frac{n}{L} \right\rceil -1 \right)^2  e^{-f(L)^{1/40}}.
    \end{equation}
    As a consequence, 
\begin{equation}\label{eq: many to one expectation}
    v(\rho,n)\leq v(\rho+\varepsilon,L)+2\frac{n- L\lfloor n/L\rfloor}{n}+\left\lceil \frac{n}{L} -1 \right\rceil \frac{f(L)}{n}  +2\left( \left\lceil \frac{n}{L} \right\rceil -1 \right)^2  e^{-f(L)^{1/40}}.
\end{equation}
    \noindent
    The same holds when $X^{(1)} \sim \dP^{\rho, L}$ and $X^{(2)} \sim \dP^{\rho + \varepsilon, n}$.
\end{proposition}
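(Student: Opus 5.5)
We build the coupling block by block over the time intervals $[kL,(k+1)L]$, $0\le k<\lceil n/L\rceil$. On $[0,L]$ the environment $\eta^{(1)}\sim\mathbf{P}^{\rho}$ is unrefreshed, while $\eta^{(2)}\sim\mathbf{P}^{\rho+\varepsilon,L}$ is resampled from $\mu_{\rho+\varepsilon}$ at each time $kL$.

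\textbf{Initial coupling.} At time $0$, by~\ref{pe:monotonicity}~ii) we couple $\eta^{(2)}_0\succcurlyeq\eta^{(1)}_0$.

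\textbf{Restoring domination at refresh times.} At each refresh time $kL$, $k\ge1$, domination is lost. We restore it on a short window $[kL,kL+t]$ with $t\approx f(L)^{c}$, a small power of $f(L)$ (so $t\ll f(L)$), as follows.
\begin{itemize}
\item By~\ref{pe:density} and a union bound over intervals of length $\ell=\lfloor t^{1/4}\rfloor$ in a box of width $H\approx n+L$ around the walkers, with high probability $\eta^{(1)}_{kL}$ (which has law $\mu_\rho$ by stationarity) has empirical density at most $\rho+\varepsilon/4$ on all such intervals.
\item Likewise, the freshly sampled $\eta^{(2)}_{kL}\sim\mu_{\rho+\varepsilon}$ has empirical density at least $\rho+3\varepsilon/4$ on all such intervals. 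This needs $\varepsilon^2\ell\gg\log$, which is where $\varepsilon\ge f(L)^{-1/40}$ and $f(L)\ge(\log L)^{90}$ enter.
\item Apply~\ref{pe:couplings} on $[kL,kL+t]$ to obtain $\eta^{(2)}_{kL+t}\succcurlyeq\eta^{(1)}_{kL+t}$ on $[-H+4\nu t,H-4\nu t]$.
\item Then apply~\ref{pe:drift} with a suitable $k'$ to propagate the domination until the next refresh time, losing $O(\nu k' L)$ at the box edges. Taking $H$ of order $n$ plus these losses keeps the walkers well inside the box, since they move at speed at most $1$.
\end{itemize}
Both couplings are Markovian, so we glue them successively using~\ref{pe:markov}. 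The failure probabilities of each block are of order $tH e^{-c\varepsilon^2 t^{1/4}}$, which we bound by $\lceil n/L\rceil e^{-f(L)^{1/40}}$ when the exponents are chosen right.

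\textbf{Walker comparison.} We drive both walkers with the same uniforms $U$ as in \eqref{def:A}. Let $D_k$ be the lag $\min(X^{(2)}-X^{(1)})$ accumulated by time $kL+t$. Off the windows, Lemma~\ref{lem:monotone} applies on the dominated region. Its hypothesis requires equal starting times and $\pi_1(w')\le\pi_1(w)$, so we compare $X^{(1)}$ with a translate of $X^{(2)}$ started at $X^{(1)}$'s position. To transfer the comparison we use coalescence: the walk started from the larger position stays above. Concretely, let $Y$ be the $\eta^{(2)}$-walk launched from $X^{(1)}_{kL+t}$. Then $X^{(2)}\ge Y$ if $X^{(2)}$ starts to the right (Lemma~\ref{lem:monotone} with $\eta=\widetilde\eta$), and $Y\ge X^{(1)}$. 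If $X^{(2)}$ starts to the left by $D$, the ordering of trajectories in the same environment keeps $X^{(2)}\ge Y-D$, so the lag does not grow outside the windows. During each window of length $t$ the walkers move freely, adding at most $2t\le f(L)$ to the lag. Summing over the $\lceil n/L\rceil-1$ refreshes gives the bound $\lceil n/L-1\rceil f(L)$. A union bound over blocks and over the uniform pieces gives the failure probability $(\lceil n/L\rceil-1)^2e^{-f(L)^{1/40}}$.

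\textbf{Expectation bound \eqref{eq: many to one expectation}.} Compare $X^{(1)}_n$ with $X^{(2)}_n$ on the good event. We have $\dE X^{(2)}_{L\lfloor n/L\rfloor}=L\lfloor n/L\rfloor\, v(\rho+\varepsilon,L)$ by the i.i.d. blocks. The leftover $n-L\lfloor n/L\rfloor$ steps cost at most $2(n-L\lfloor n/L\rfloor)$ and $|v|\le1$. On the bad event the lag is bounded by $2n$. Dividing by $n$ gives \eqref{eq: many to one expectation}. The reversed case, with $X^{(1)}\sim\dP^{\rho,L}$ and $X^{(2)}\sim\dP^{\rho+\varepsilon,n}$, is identical, with the refresh now on the lower environment.

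\textbf{Main obstacle.} I expect the hardest step to be the parameter bookkeeping: choosing $t$, $\ell$ and $k'$ as powers of $f(L)$ so that the hypotheses of~\ref{pe:couplings} hold with $\varepsilon\ge f(L)^{-1/40}$, and so that all error terms are at most $e^{-f(L)^{1/40}}$ after union bounds over $H\sim n$. The factor $H$ is absorbed into the square $(\lceil n/L\rceil-1)^2$ only if we localise the box to width $O(L)$ per block rather than $n$, so I would localise each block to a box of width $O(L)$ around the walkers' current positions.
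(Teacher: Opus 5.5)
There is a genuine gap in your walker comparison, in exactly the case that matters. During each refresh window the walkers move uncontrolled, so afterwards the lower-density walker $X^{(1)}$ may be ahead of $X^{(2)}$ by some $D>0$. You then assert that ``the ordering of trajectories in the same environment keeps $X^{(2)}\ge Y-D$''. This is false.

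Coalescence only gives the one-sided bound $X^{(2)}\le Y$ for two $\eta^{(2)}$-walks with $X^{(2)}$ started on the left. It gives no upper bound on the gap $Y-X^{(2)}$. Indeed, $Y-D$ is a walk in the translated environment $\eta^{(2)}(\cdot+D)$. Comparing it with $X^{(2)}$ through Lemma~\ref{lem:monotone} would require $\eta^{(2)}_s(\cdot)\succcurlyeq\eta^{(2)}_s(\cdot+D)$, which fails. With the space-time field $U$ of \eqref{def:A}, two walks at distinct sites use independent uniforms until they meet. Their gap therefore fluctuates on scale $\sqrt{L}$ over a block, far above $f(L)\le L^{1/10}$.

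The root cause is that you restore $\eta^{(2)}\succcurlyeq\eta^{(1)}$ in a common spatial frame. Such domination is only useful while $X^{(2)}$ is already to the right of $X^{(1)}$. Your phrase ``compare $X^{(1)}$ with a translate of $X^{(2)}$'' points the right way. But a translated walk lives in a translated environment, so the domination itself must be set up between translated environments.

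The missing idea is to recouple in the walkers' own frames. At time $kL$, set $x_i=X^{(i)}_{kL}$ and $t=\lfloor f(L)/2\rfloor$. Apply \ref{pe:couplings}, then \ref{pe:drift}, to the translated processes $\eta^{(1)}_{kL+\cdot}(x_1+\cdot)$ and $\eta^{(2)}_{kL+\cdot}(x_2-2t+\cdot)$. This is legitimate by \ref{pe:markov}, since the shift is $\mathcal F(kL)$-measurable. It requires imposing the density event on every interval of a box of width $O(n)$ containing all possible translates.

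Then start auxiliary walks at time $kL+t$: $Y^{(1)}$ from $x_1+t$ in $\eta^{(1)}$, and $Y^{(2)}$ from $x_2-t$ in $\eta^{(2)}$. They sit at the same relative position in the two frames. Lemma~\ref{lem:monotone} for the translated environments then gives $Y^{(1)}_s-x_1\le Y^{(2)}_s-x_2+2t$. Same-environment ordering is now used only in its valid direction: $X^{(1)}\le Y^{(1)}$ and $X^{(2)}\ge Y^{(2)}$. Hence the lag grows by at most $2t\le f(L)$ per refresh and not at all in between.

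Relatedly, localising the density event to an $O(L)$ box around the walkers does not remove the factor $n$. Since $X^{(1)}_{kL}$ depends on $\eta^{(1)}$, you must union bound over its possible positions anyway. It need not be removed: $n\le\lceil n/L\rceil L$ supplies one factor of the square, and $L$ is absorbed because $f(L)\ge(\log L)^{90}$ gives $Le^{-cf(L)^{1/5}}\ll e^{-f(L)^{1/40}}$.

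Finally, your window length must be a power of $f(L)$ strictly above $3/10$, so that $\varepsilon^2t^{1/4}$ beats $f(L)^{1/40}$ when $\varepsilon=f(L)^{-1/40}$. Taking $t\approx f(L)/2$, as in the paper, is safe.
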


\begin{Rk} \label{Rk: Flitrations}
  We will denote by $(\mathcal{F}(t))_{t \geq 0}$ the natural filtration (in time) w.r.t. $X^{(1)}$ and $X^{(2)}$ and their environments, denoted $(\eta^{(1)}_t)_{t \geq 0}$ and $(\eta^{(2)}_t)_{t \geq 0}$ respectively. 
    We will apply several times the following principle: if an event is measurable with respect to one of these filtrations, we can make use of the Markov property (\ref{pe:markov}) to ensure that our coupling maintains the appropriate marginal distributions.
\end{Rk}

The proof is a generalisation of~\cite[Lemma 4.1]{CKKR24} which treated the case $n=2L$. We give a full account for completeness. 

\begin{proof}
    
    We only establish the proposition for $X^{(1)} \sim \dP^{\rho, n}$ and $X^{(2)} \sim \dP^{\rho + \varepsilon, L}$, as the case when $X^{(1)} \sim \dP^{\rho, L}$ and $X^{(2)} \sim \dP^{\rho + \varepsilon, n}$ follows in the same way. Note that~\eqref{eq: many to one statement} implies~\eqref{eq: many to one expectation} as, writing $\mathbb{E}_{ \dQ_{n,L} }$ for the expectation under $ \dQ_{n,L} $, 
\begin{equation} \label{eq: speed range adjustor}
\begin{split}
    v(\rho,n)-&v(\rho+\varepsilon,L)=\mathbb{E}_{ \dQ_{n,L} }\left[\frac{X^{(1)}_n}{n} \right]-\mathbb{E}_{ \dQ_{n,L} }\left[\frac{X^{(2)}_{L\lfloor \frac{n}{L}\rfloor}}{L\lfloor \frac{n}{L}\rfloor} \right]
\\
&\leq \mathbb{E}_{ \dQ_{n,L} }\left[\frac{X^{(1)}_{L\lfloor \frac{n}{L}\rfloor}+n-{L\lfloor \frac{n}{L}\rfloor}}{n} \right]-\mathbb{E}_{ \dQ_{n,L} }\left[\frac{X^{(2)}_{L\lfloor \frac{n}{L}\rfloor}}{L\lfloor \frac{n}{L}\rfloor} \right]
\\
&\leq\frac{n- L\lfloor n/L\rfloor}{n}+\frac{1}{n}\mathbb{E}_{ \dQ_{n,L} }\left[X^{(1)}_{L\lfloor \frac{n}{L}\rfloor}- X^{(2)}_{L\lfloor \frac{n}{L}\rfloor}\right]+\left(\frac{1}{n}-\frac{1}{L\lfloor \frac{n}{L}\rfloor}\right)\mathbb{E}_{ \dQ_{n,L} }\left[X^{(2)}_{L\lfloor \frac{n}{L}\rfloor}\right]
\\
&\leq \frac{n- L\lfloor n/L\rfloor}{n}+\left(\frac{n- L\lfloor \frac{n}{L}\rfloor}{n\times L\lfloor \frac{n}{L}\rfloor}\right)L\left\lfloor \frac{n}{L}\right\rfloor +\frac{1}{n}\mathbb{E}_{ \dQ_{n,L} }\left[X^{(1)}_{L\lfloor \frac{n}{L}\rfloor}- X^{(2)}_{L\lfloor \frac{n}{L}\rfloor}\right]
\\
&\leq 2\frac{n- L\lfloor n/L\rfloor}{n}+\frac{1}{n}\left(2L\left\lfloor \frac{n}{L}\right\rfloor \dQ_{n,L}\left( X^{(1)}_{L\lfloor \frac{n}{L}\rfloor}- X^{(2)}_{L\left\lfloor \frac{n}{L}\right\rfloor} > \left\lceil \frac{n}{L} -1 \right\rceil f(L)\right)  \right.
\\
&\quad\quad\quad\quad\,\,+\left. \left\lceil\frac{n}{L} -1 \right\rceil f(L)\right)
\end{split}
\end{equation}
and the conclusion follows from~\eqref{eq: many to one statement}. 
    
    We now focus on~\eqref{eq: many to one statement}. We will prove this statement for any $n = KL$, $K \in \dN$. 
    This implies the proposition in the general case, since we are considering the minimum over $0 \leq s \leq n$, so if $n \leq KL$, then 
    \begin{equation} \label{eq: many to one generalising KL to n pt 1}
        \dQ_{KL,L} \left( \min_{0 \leq s \leq KL} \left(X^{(2)}_s - X^{(1)}_s \right) \leq - \left\lceil \frac{KL}{L} -1 \right\rceil f(L) \right) 
        \leq \left( K-1 \right)^2 e^{-f(L)^{1/40}}
    \end{equation}
    implies
    \begin{equation} \label{eq: many to one generalising KL to n pt 2}
    \dQ_{n,L} \left( \min_{0 \leq s \leq n} \left(X^{(2)}_s - X^{(1)}_s \right) \leq - \left\lceil \frac{n}{L} -1 \right\rceil f(L) \right) 
        \leq  (K-1)^2 e^{-f(L)^{1/40}}.
    \end{equation}
    When $(K-1)L < n \leq KL$, $\left \lceil \frac{n}{L} \right \rceil = K$, and thus, the right hand side of \eqref{eq: many to one generalising KL to n pt 2} is equal to that of \eqref{eq: many to one statement}.
\\
\\
    The proof will proceed by inducting on $K$. 
\\
    The case $K = 1$ holds by (\ref{pe:monotonicity}), with the coupling $\dQ_{L,L}$ given by taking $\eta' = \eta^{(1)}$ and $\eta = \eta^{(2)}$. 
\\
    Now, assume that for some $K \geq 1$, $\dQ_{KL,L}$ is well defined (that is, with the correct marginals for the walkers and environments) and
    \begin{equation} \label{eq: many to one ind hyp}
        \dQ_{KL,L} \left( \min_{0 \leq s \leq KL} \left(X^{(2)}_s - X^{(1)}_s \right) \leq - (K-1) f(L) \right) 
        \leq (K-1)^2 e^{-f(L)^{1/40}}.
    \end{equation}

    Letting $n = (K+1)L$, we aim to  construct $\dQ_{(K+1)L,L}$ in a coherent fashion and to show that
    \begin{equation} \label{eq: many to one ind aim}
        \dQ_{n,L} \left( \min_{0 \leq s \leq n} \left(X^{(2)}_s - X^{(1)}_s \right) \leq - K f(L) \right) 
        \leq K^2 e^{-f(L)^{1/40}},
    \end{equation}
which would conclude the proof.

    \begin{figure}
        \centering
        \includegraphics[width=0.5\linewidth]{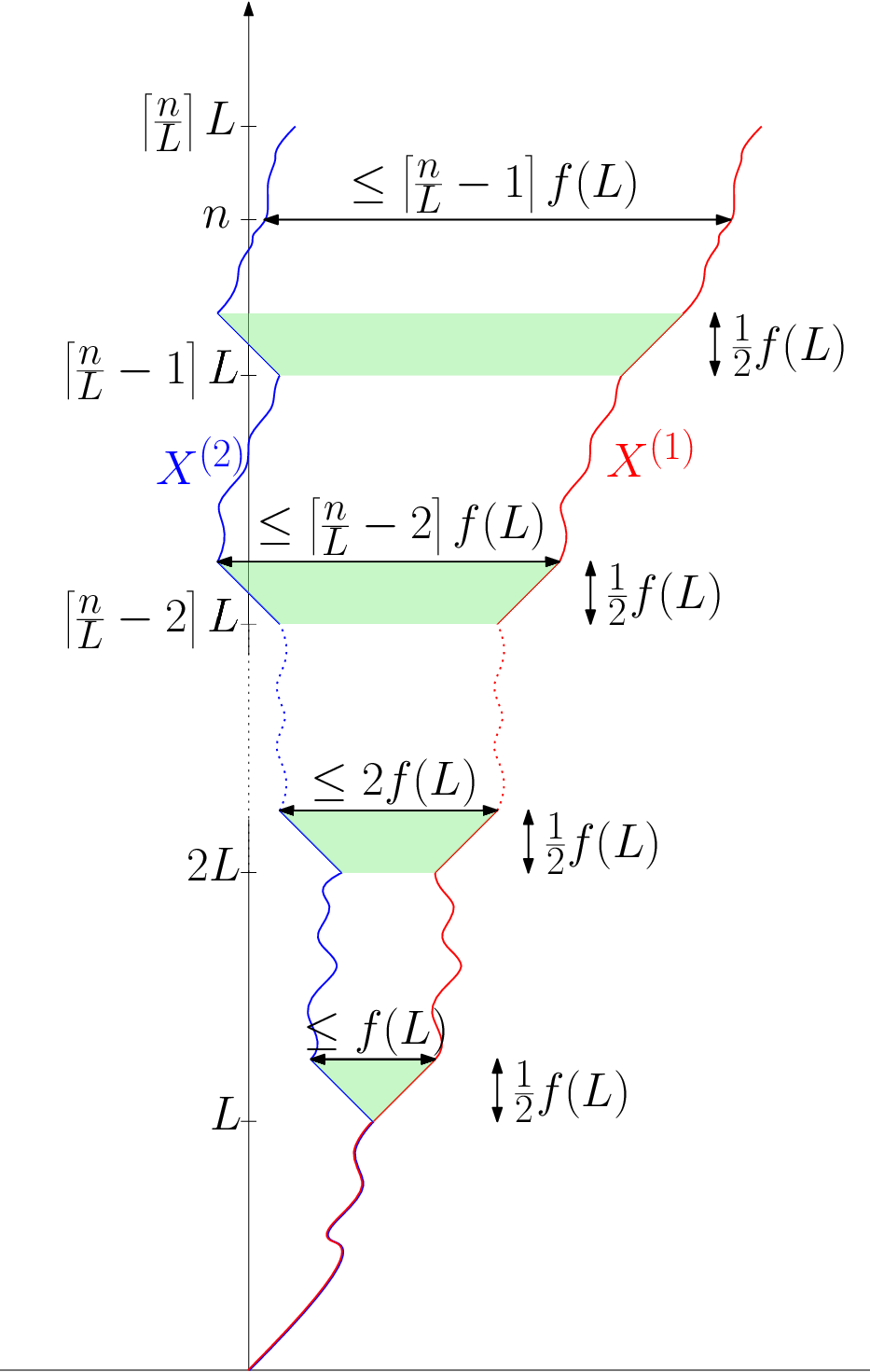}
        \caption{An illustration of $X^{(1)}$ (in red) and $X^{(2)}$ (in blue) under $\dQ_{n,L}$. We lose control of the exact coupling of the environments in the green shaded areas and attempt to recouple them by the exit time of those areas. During this time, $X^{(1)}$ could potentially drift ahead of $X^{(2)}$, incurring the loss term inside $\mathbb{Q}_{n,L}$ in~\eqref{eq: many to one statement}.}
        \label{fig:many to one proof}
    \end{figure}
We first construct the coupling $\dQ_{n, L}$ before bounding the error probabilities so as to satisfy~\eqref{eq: many to one statement}. In a nutshell, $\dQ_{n, L}$ coincides with $\dQ_{KL, L}$ on the time interval $[0, KL]$. Then at time $KL$, we apply the coupling  \ref{pe:couplings} between times $KL$ and $KL + f(L)/2$, on the event that the environments have the required empirical densities (event $G_1$ at  \eqref{eq: many to one G1 def}). If the coupling is successful (meaning that $\eta^{(2)}$ shifted by the worst-case discrepancy between $X^{(1)}$ and $X^{(2)}$ covers $\eta^{(1)}$, see $G_2$ at \eqref{eq: many to one G2 def}), we extend this domination until time $(K+1)L$ via \ref{pe:drift}. The three failure possibilities are: this coupling \ref{pe:drift}  (event $G_3^c$, see \eqref{eq: many to one G3 def}) or the previous coupling \ref{pe:couplings} being unsuccessful (hence $G_2^c$), or the conditions to apply \ref{pe:couplings} at time $KL$ not being met ($G_1^c$, see \eqref{eq: many to one G1 def}). In each case, we extend the environments according to the right marginals in any fashion (say, independently from each other), and from \ref{pe:drift}-\ref{pe:couplings} we can check (at \eqref{eq: many to one union bound proba}) that the probability of any of these happening is small enough. 
\\
\\
In detail, we first let $\dQ_{n, L}$ coincide with $\dQ_{KL, L}$ during $[0, KL]$. 
Consider the set of intervals
    \begin{equation}
        \cI = \left\{ I \subset [-5n, 5n], \lfloor \lfloor f(L)^{1/4}\rfloor/2\rfloor  \leq |I| \leq \lfloor f(L)^{1/4}\rfloor \right\}.
    \end{equation} 
    At time $KL$, we define the $\mathcal{F}(KL)$-measurable event
    \begin{equation} \label{eq: many to one G1 def}
        G_1 = \left\{ \forall I \in \cI, \eta^{(1)}_{KL}(I) \leq (\rho + \varepsilon/4) |I| \text{ and } \eta^{(2)}_{KL}(I) \geq (\rho + 3\varepsilon/4) |I| \right\}.
    \end{equation}
  For simplicity, we will write $x_1=X^{(1)}_{KL}$ and $x_2=X^{(2)}_{KL}$ in the sequel. 
  On any realisation of $(\eta^{(1)}_t,\eta^{(2)}_t, X^{(1)}_t, X^{(2)}_t)_{0\leq t \leq KL}$ such that the event $G_1^c$ holds, extend the coupling until time $n$ by letting $\eta^{(1)}$ (resp. $\eta^{(2)}$) evolve according to $\mathbf{P}^{\eta^{(1)}_{KL}}$ (resp. $\mathbf{P}^{\eta^{(2)}_{KL}}$), independently of each other, and given the realisations of these environments, let again $X^{(1)}\sim \mathbb P^{\eta^{(1)}}_{x_1}$ and $X^{(2)}\sim \mathbb P^{\eta^{(2)}}_{x_2}$ independently until time $n$. As noted in Remark~\ref{Rk: Flitrations}, the $\mathcal{F}(KL)$-measurability of $G_1$ and~\ref{pe:markov} ensure that the environments and walks retain the desired marginals over the full interval $[0,n]$. 
  \\
  In the same logic, on any realisation of $(\eta^{(1)}_t,\eta^{(2)}_t, X^{(1)}_t, X^{(2)}_t)_{0\leq t \leq KL}$ such that $G_1$ holds, we couple $\eta^{(1)}, \eta^{(2)}$ as in \ref{pe:couplings} with $H = 7L, t = \lfloor f(L)/2\rfloor $, $\eta'=\eta^{(1)}_{KL+\cdot}(x_1+\cdot)$, $\eta=\eta^{(2)}_{KL+\cdot}(x_2-2t+\cdot )$ and keeping the same values for $\rho, \varepsilon$. Note that the conditions of \ref{pe:couplings} are met on $G_1$ as $\vert x_1\pm H\vert,\vert x_2-2t\pm H\vert \leq H+ KL+f(L)\leq 7L+n \leq 9n/2 <5n$, since $n=(K+1)L\geq 2L$ (and $f(L)\leq \max((\log L)^{90}, L^{1/10})\leq L$ if $L_0$ is chosen large enough).  
   Define the event
    \begin{equation} \label{eq: many to one G2 def}
        G_2 = \left\{ \forall x \in [-3L, 3L], \eta^{(1)}_{KL+\lfloor f(L)/2\rfloor}(x + x_1) \leq \eta^{(2)}_{KL+\lfloor f(L)/2\rfloor}(x-2\lfloor f(L)/2\rfloor+x_2) \right\},
    \end{equation} 
    which is what we deem a successful recoupling of the environments. 
    \\
On the event $G_2^c$, like for $G_1^c$, we extend the coupling on the time interval $[KL+\lfloor f(L)/2\rfloor,n]$ by letting both environments and walks evolve independently according to their marginal distributions, and note again that this retains the desired marginals by \ref{pe:markov} and the $\mathcal{F}(KL+\lfloor f(L)/2\rfloor)$-measurability of $G_2$.
\\
On any realisation of the environments and walks such that $G_2$ holds, we can apply \ref{pe:drift} with $\eta =\eta^{(2)}_{KL+\lfloor f(L)/2\rfloor+\cdot}(x_2-2\lfloor f(L)/2\rfloor+\cdot)$, $\eta'=\eta^{(1)}_{KL+\lfloor f(L)/2\rfloor+\cdot}(x_1+\cdot)$, $H=3L$, $t=L-\lfloor f(L)/2\rfloor$ and $k=1$. Once again due to the $\mathcal{F}(KL+\lfloor f(L)/2\rfloor)$-measurability of $G_2$ and~\ref{pe:markov}, we now have that 
\begin{equation}\label{eq: many to one marginal laws envt}
\begin{split}
     &\text{$\eta^{(1)}, \eta^{(2)}$ are distributed as $\mathbf{P}^{\rho, L},\mathbf{P}^{\rho+\varepsilon, n}$ respectively }
     \\
     &\text{over the full time interval $[0,n]$, for $i=1,2$.}
\end{split}
\end{equation}

We define a last event (corresponding to the coupling \ref{pe:drift} being successful)  
\begin{equation} \label{eq: many to one G3 def}
\begin{split}
   &G_3 = 
   \\
   &\left \{ \forall s \in [KL + \lfloor f(L)/2\rfloor, n], \eta^{(1)}_s|_{[-L + x_1, L + x_1]} \preccurlyeq \eta^{(2)}_s|_{[-L - 2\lfloor f(L)/2\rfloor + x_2, L - 2\lfloor f(L)/2\rfloor + x_2]}
            \right \}.    
\end{split}
    \end{equation}
On the event $G_3^c$, extend the coupling during $[KL + \lfloor f(L)/2\rfloor, n]$ by letting $X^{(i)}$ evolve according to $\mathbb{P}^{\eta^{(i)}}_{X^{(i)}_{KL + \lfloor f(L)/2\rfloor}}$, independently for $i=1,2$. Finally, on any realisation of the environments $\eta^{(1)},\eta^{(2)}$ such that $G_3$ holds, we let the two walks $X^{(1)},X^{(2)}$ evolve by using a same collection of i.i.d. uniform variables $U_n$'s over $[0,1]$ (independent of everything else) as per \eqref{e:def-RWDRE}. 
\\
Remark that by our construction, on every time interval $[a,b]$ and regardless of which branch of the scenario we are on, for any given realisation of $\eta^{(i)}$ we have $X^{(i)}\sim \mathbb{P}^{\eta^{(i)}}_{X^{(i)}_a}$, $i=1,2$. Combining this with \eqref{eq: many to one marginal laws envt} yields that both $X^{(1)} \sim \dP^{\rho, n}$ and $X^{(2)} \sim \dP^{\rho + \varepsilon, L}$ under $\dQ_{n,L}$, as desired. 
\\
\\
We now turn to the proof of~\eqref{eq: many to one ind aim}. \\
    Define $E_{M} = \left\{ \min_{0 \leq s \leq ML} \left( X^{(2)}_s - X^{(1)}_s   \right) \leq -(M-1)f(L) \right\}$, for $M\geq 1$. Note that 
    \begin{equation}
        \begin{split}
          E_{K+1}
          \subseteq 
          E_{K} \bigcup
          \left(\left\{ \min_{KL \leq s \leq n} \left( X^{(2)}_s - X^{(1)}_s   \right) \leq -K f(L) \right\} \bigcap E_{K}^c \right).
        \end{split}
    \end{equation}
Thus by a union bound and the induction hypothesis applied to $E_K$ (noticing that $\dQ_{n,L}(E_K) = \dQ_{KL,L}(E_K) $ by construction as $E_K$ is $\mathcal{F}(KL)$-measurable), it suffices to show that
  \begin{equation} \label{eq: many to one ind step requirement}
        \begin{split}
            \dQ_{n,L} \bigg( \left( \min_{KL \leq s \leq (K+1)L} \left( X^{(2)}_s - X^{(1)}_s \right) \leq -K f(L) \right) \bigcap E_K^c \bigg) 
            \leq  \left(2K -1 \right) e^{-f(L)^{1/40}}, 
        \end{split}
    \end{equation}
which will follow once we establish 
      \begin{equation}\label{eq: many to one event inclusion}
        \left\{ \min_{KL \leq s \leq n} \left( X^{(2)}_s - X^{(1)}_s          \right) \leq -K f(L) \right\} \bigcap E_{K}^c 
        \subseteq  G_1^c \bigcup G_2^c \bigcup G_3^c
    \end{equation}
and
\begin{equation}\label{eq: many to one three Gs}
     \dQ_{n,L}(G_1^c \cup G_2^c \cup G_3^c) \leq \left(2K -1 \right) e^{-f(L)^{1/40}}
\end{equation}
for $L$ large enough, uniformly in $K$. 
\\
\textbf{Proof of~\eqref{eq: many to one event inclusion}:}
We actually show that on $E_{K}^c \cap  G_1\cap G_2\cap G_3$, we have 
\begin{equation}
    \min_{KL \leq s \leq n} \left( X^{(2)}_s - X^{(1)}_s  \right) > -K f(L),        
\end{equation}
which suffices for our purpose. Note that on $E_K^c$, we have $x_1< x_2+(K-1) f( L )$, and since the walkers move by one space unit at every time step, we get
\begin{equation} \label{eq: many to one walks post recoupling}
X^{(1)}_{KL+\lfloor f(L)/2\rfloor }  < X^{(2)}_{KL+\lfloor f(L)/2\rfloor }   + (K-1){f(L)} \color{black} +2\lfloor f(L)/2\rfloor \leq  X^{(2)}_{KL+\lfloor f(L)/2\rfloor }   + K
{f}(L).
\end{equation}

From here, using $G_2$ and $G_3$, we have that
\begin{equation}\label{eq: eta 1 smaller eta 2}
    \eta^{(1)}_s(x + x_1) \leq \eta^{(2)}_s\left( x - 2\left\lfloor f(L)/2 \right\rfloor + x_2 \right)
\end{equation}
for all $x\in [-L, L], s \in [KL + \lfloor f(L)/2 \rfloor, n]$. 

Define two auxiliary walks $Y^{(1)} \sim \dP^{\eta^{(1)}}, Y^{(2)} \sim \dP^{\eta^{(2)}}$ starting at time $KL+\lfloor f(L)/2 \rfloor$ with initial positions $Y^{(1)}_{KL+\lfloor f(L)/2 \rfloor} = x_1 + \lfloor f(L)/2 \rfloor$ and $Y^{(2)}_{KL+\lfloor f(L)/2 \rfloor} = x_2 - \lfloor f(L)/2 \rfloor$. We can apply Lemma \ref{lem:monotone} with $\eta(\cdot) = \eta^{(1)}(\cdot + x_1)$, $\tilde \eta (\cdot) = \eta^{(2)}(\cdot - 2\left\lfloor f(L)/2 \right\rfloor + x_2)$ 
and $w' = w = (\lfloor f(L)/2 \rfloor, KL + \lfloor f(L)/2 \rfloor)$, owing to~\eqref{eq: eta 1 smaller eta 2}. We obtain that
\begin{equation} \label{eq: many to one worst case monotone comp}
    Y^{(1)}_s \leq Y^{(2)}_s + x_1 - x_2 + 2 \lfloor f(L)/2 \rfloor
    \leq Y^{(2)}_s + Kf(L)
\end{equation}
for $s \in [KL + \lfloor f(L)/2 \rfloor, n]$.

We can then compare $X^{(1)}$ to $Y^{(1)}$ using Lemma \ref{lem:monotone} with $\eta(\cdot) = \tilde\eta(\cdot) = \eta^{(1)}(\cdot)$ and $w' = (X^{(1)}_{KL + \lfloor f(L)/2 \rfloor}, KL + \lfloor f(L)/2 \rfloor), w =( x_1 + \lfloor f(L)/2 \rfloor, KL + \lfloor f(L)/2 \rfloor)$. Note that as $X^{(1)}_{KL} = x_1$, we have that $\pi_1(w') \leq \pi_1(w)$. We obtain that
\begin{equation} \label{eq: many to one lower density walk to worst case monotone comp}
    X^{(1)}_s \leq Y^{(1)}_s
\end{equation}
for $s \in [KL + \lfloor f(L)/2 \rfloor, n]$.

Similarly, we can compare $X^{(2)}$ to $Y^{(2)}$ using again Lemma \ref{lem:monotone} and we 
obtain that
\begin{equation} \label{eq: many to one higher density walk to worst case monotone comp}
    X^{(2)}_s \geq Y^{(2)}_s
\end{equation}
for $s \in [KL + \lfloor f(L)/2 \rfloor, n]$.

By combining \eqref{eq: many to one worst case monotone comp}, \eqref{eq: many to one lower density walk to worst case monotone comp}, and \eqref{eq: many to one higher density walk to worst case monotone comp}, we have that
\begin{equation}
    X^{(1)}_s
    \leq X^{(2)}_s + Kf(L)
\end{equation}
for $s \in [KL + \lfloor f(L)/2 \rfloor, n]$.

\noindent
\textbf{Proof of~\eqref{eq: many to one three Gs}:}
First, we obtain by (\ref{pe:density}) with $\ell$ ranging in $[\lfloor \lfloor f(L)^{1/4}\rfloor/2\rfloor ,\lfloor f(L)^{1/4}\rfloor ]$ and $\varepsilon/4$ instead of $\varepsilon$, and a union bound on $\mathcal{I}$:
    \begin{equation} \label{eq: many to one G1 prob}
        \dQ_{n,L} (G_1^c) \leq 20 n f(L)^{1/4}\exp \left( -\Cr{densitydev}\left(\frac{\varepsilon}{4}\right) ^2 \lfloor \lfloor f(L)^{1/4}\rfloor/2\rfloor  \right) 
    \end{equation}
Second, our application of \ref{pe:couplings} to any realisation $R$ of $\eta^{(1)}, \eta^{(2)}, X^{(1)},$ and $X^{(2)}$ up to time $KL$ such that $G_1$ holds, above~\eqref{eq: many to one G2 def}, yields 
    \begin{equation} \label{eq: many to one G2 prob}
        \dQ_{n,L} \left( G_2^c | R \right) \leq 7\Cr{SEPcoupling2} f(L) L \exp \left( - (\Cr{SEPcoupling2} (1 + \nu^{-1}))^{-1}\left(\frac{\varepsilon}{4}\right)^2f(L)^{1/4} \right).
    \end{equation}
Third, when we apply \ref{pe:drift} at time $KL+\lfloor f(L)/2\rfloor $ on any realisation $R$ of $\eta^{(1)}, \eta^{(2)}, X^{(1)},$ and $X^{(2)}$ such that $G_1$ and $G_2$ hold, above~\eqref{eq: many to one marginal laws envt}, we obtain 
    \begin{equation} \label{eq: many to one G3 prob}
        \dQ_{n,L}(G_3^c|R) \leq 20 \exp (-(L - f(L))/4).
    \end{equation} 
Putting together \eqref{eq: many to one G1 prob}, \eqref{eq: many to one G2 prob} and \eqref{eq: many to one G3 prob} and substituting $ \varepsilon=f(L)^{-1/40}$ yields 
    \begin{equation}\label{eq: many to one union bound proba}
        \begin{split}
            \dQ_{n,L}&(G_1^c \cup G_2^c \cup G_3^c) 
            \leq 47\Cr{SEPcoupling2}\frac{n}{L}L^3\exp(-cf(L)^{1/5})\leq (2K-1)\exp({-f(L)^{1/40}})
        \end{split}
    \end{equation}
    for some constant $c=c(c_1,\Cr{SEPcoupling2})>0$, and for $L_0$ large enough in a way that only depends on $c_1$ and $\Cr{SEPcoupling2}$ (essentially, $L$ and $f(L)$ must be large enough so that the terms in each exponential dominate $f(L)^{1/40}$ up to the ad hoc prefactors). This concludes the proof. 
\end{proof}

\section{Proof of Theorem \ref{thm:moregeneral}}\label{sec:speed}

In this section, we prove \Cref{thm:moregeneral}. We will first find a candidate speed for all but countably many $\rho \in J$ in \Cref{sec:speedcandidate}. Then, we will prove in Section~\ref{subsec:speedproof} that this candidate is indeed the speed of the random walk with density $\rho$.

\subsection{Speed candidate}\label{sec:speedcandidate}

We will use the following proposition to show that the limit in $L$ of $    v(\rho, L) $ must exist for all but countably many $\rho \in J$. 
We will refer to this limit in $L$ as $v(\rho)$, and thus define (where it exists)
\begin{equation} \label{def: v(rho)}
    v(\rho) = \lim_{L \rightarrow \infty} v(\rho, L).
\end{equation}

\begin{proposition} \label{prop: liminf limsup}
    Let $\eta$ be an environment satisfying~\ref{pe:markov}-\ref{pe:density},  \ref{pe:drift} and \ref{pe:couplings}.  Then, for all $\rho \in J, \varepsilon > 0$, 
    \begin{equation} \label{eq: limit inequality in rho}
        \limsup_{L \rightarrow \infty} v(\rho, L) \leq \liminf_{L \rightarrow \infty} v(\rho + \varepsilon, L).
    \end{equation}
As a consequence, if $\widebar{J}:=\{\rho \in J,     {v}(\rho)\text{ is well defined} \}$, then $J\setminus \widebar{J}$ is countable. Moreover, ${v}$ is non-decreasing over $\widebar{J}$. 
\end{proposition}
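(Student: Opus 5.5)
The plan is to iterate Proposition~\ref{prop: many to one coupling} along a dyadic sequence of ranges, spending a summable amount of density at each step. Fix $\rho\in J$ and $\varepsilon>0$, with $\rho+\varepsilon\in J$ (otherwise there is nothing to prove). Choose $f(L)=(\log L)^{90}$, so that $\beta(L):=f(L)^{-1/40}=(\log L)^{-9/4}$. For $L_j=2^jL$, the sum $\sum_{j\ge0}\beta(L_j)\asymp\sum_j (j+\log L)^{-9/4}$ converges and tends to $0$ as $L\to\infty$. Hence for $L$ large we have $\sum_{j\geq 0}\beta(L_j)<\varepsilon/2$.

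The first step is to apply \eqref{eq: many to one expectation} with $n=2L_j$ and range $L_j$, in the direction $X^{(1)}\sim\dP^{\rho,L_j}$, $X^{(2)}\sim\dP^{\rho+\beta,2L_j}$ (the ``same holds'' version). Since $n/L_j=2$ is an integer, the remainder term vanishes, and we get
$$v(\rho_j,L_j)\le v(\rho_{j+1},L_{j+1})+\frac{f(L_j)}{2L_j}+2e^{-f(L_j)^{1/40}},$$
where $\rho_{j+1}=\rho_j+\beta(L_j)$ and $\rho_0=\rho$. The error terms are summable along the dyadic sequence and their tail sum $\alpha(L)\to0$ as $L\to\infty$. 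Chaining the inequality gives, for every $m$,
$$v(\rho,L)\le v(\rho_m,2^mL)+\alpha(L)\qquad\text{with }\rho_m<\rho+\varepsilon/2.$$

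The second step handles arbitrary large $L'$ rather than only dyadic multiples of $L$. The plan is to make one final application of \eqref{eq: many to one expectation} with $n=2^mL$ and range $L'$, where $2^mL\gg L'$ is chosen so that both the remainder term $2(n-L'\lfloor n/L'\rfloor)/n\le 2L'/n$ and the term $\lceil n/L'\rceil f(L')/n\approx f(L')/L'$ are small. The last term $2\lceil n/L'\rceil^2e^{-f(L')^{1/40}}$ is the problematic one, because it grows with $n/L'$. With $f(L')=(\log L')^{90}$ we have $e^{-f^{1/40}}=e^{-(\log L')^{9/4}}$, which is superpolynomially small, so taking $n/L'$ only polynomially large, for instance $n\in[L'^2,2L'^2]$, keeps this term small. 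Using also $v(\rho_m,\cdot)\le v(\rho+\varepsilon,\cdot)$-type monotonicity in the density, this gives
$$v(\rho_m,2^mL)\le v(\rho+\varepsilon,L')+o(1).$$
Note that the monotonicity in the density is applied at a fixed range, where it follows from \eqref{pe:monotonicity} and Lemma~\ref{lem:monotone}, or directly from the proposition with one more increment. Taking the limsup over $L$ and the liminf over $L'$ yields \eqref{eq: limit inequality in rho}. The main obstacle is exactly this bookkeeping: matching $m$ to $L'$ so that all three error terms vanish simultaneously while the total density spent stays below $\varepsilon$.

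For the consequences, set $\overline v(\rho)=\limsup_L v(\rho,L)$ and $\underline v(\rho)=\liminf_L v(\rho,L)$. Both lie in $[-1,1]$. By \eqref{eq: limit inequality in rho},
$$\underline v(\rho)\le\overline v(\rho)\le\underline v(\rho+\varepsilon)\le \overline v(\rho+\varepsilon),$$
so both functions are non-decreasing, and $\overline v(\rho)\le\underline v(\rho^+):=\lim_{\varepsilon\downarrow0}\underline v(\rho+\varepsilon)$. A non-decreasing bounded function has at most countably many discontinuities. At every continuity point $\rho$ of $\underline v$ we get $\overline v(\rho)\le \underline v(\rho^+)=\underline v(\rho)$, so the limit $v(\rho)$ exists. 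Thus $J\setminus\widebar J$ is countable, and $v=\underline v$ on $\widebar J$ is non-decreasing.
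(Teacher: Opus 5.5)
Your proof is correct and follows the paper's strategy: you chain Proposition~\ref{prop: many to one coupling} along dyadic ranges with summable sprinkled density, then make one bridging application to reach an arbitrary range. The differences are only bookkeeping. You overshoot to $n\asymp L'^2$ and come back down to range $L'$ using the first version of the proposition, absorbing the $(n/L')^2$ prefactor through the superpolynomial decay of $e^{-f(L')^{1/40}}$; the paper instead stops the chain at $L_j\in[\delta k',2\delta k')$, bridges upward with ratio at most $1/\delta$, and pays an $O(\delta)$ remainder. Likewise, your countability argument via discontinuities of the monotone function $\liminf_L v(\cdot,L)$ replaces the paper's argument with the disjoint intervals $I_\rho$.
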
 
In a nutshell, the proof relies on approximating arbitrarily the limsup and the liminf by $v(\rho,k)$ and $v(\rho+\varepsilon,k')$ for large enough $k$ and $k'$. Note that it may not be possible to compare them directly via Proposition~\ref{prop: many to one coupling}, as we have no quantitative estimate on the relation between $k$ and $k'$, so that the error term $k/k'$ (or $k'/k$ if $L'>L$) could be arbitrarily large. Instead, to bridge the gap from $k$ to $k'$, we define a sequence of couplings as in Proposition~\ref{prop: many to one coupling} with $n\leq 2L$ each time, gradually sprinkling the extra-density of particles up to an amount not exceeding $\varepsilon$. 

\begin{proof}
Let $\rho \in J$ and $\varepsilon > 0$. Fix $\delta \in (0,1)$. Then there exist two large enough integers $k,k'$ with $k'\geq k/\delta$ such that 
\begin{equation}\label{eq: v rho delta inequality}
v(\rho, k) \geq \limsup_{L \rightarrow \infty} v(\rho, L)\,-\delta \text{ and }v(\rho+\varepsilon, k') \leq \liminf_{L \rightarrow \infty} v(\rho+\varepsilon, L)\,+\delta.
\end{equation}
Let $j\geq 0$ be the unique integer such that $2^jk/\delta\in [k',2k')$. For $0\leq i\leq j$, set $L_i:=2^ik$, and write $L_{j+1}:=k'$. Let $f:x \mapsto x^{1/10}$ over $\mathbb{R}_+$, and define for $1\leq i\leq j$:
\begin{equation}
 \varepsilon_i=f(L_i)^{-1/40},\,\rho_i=\rho+\sum_{u=1}^i\varepsilon_u.
\end{equation}
Let also $\rho_0:=\rho$. 
By Proposition~\ref{prop: many to one coupling} with $n=L_{i+1}=2L_i,L=L_i, \rho=\rho_i $ and $\varepsilon=\varepsilon_i$, we have for all $0\leq i \leq j-1$:
\begin{equation}\label{eq: v chain inequality}
v(\rho_i,L_i) \leq v(\rho_{i+1}, L_{i+1})+\frac{f(L_i)}{2L_{i}}+2e^{-f(L_i)^{1/40}}.    
\end{equation}
We then need to apply a last time the coupling of Proposition~\ref{prop: many to one coupling} to connect the scales $2^jk$ and $k'$, with $n=L_{j+1}$, $L=L_j$, $\rho=\rho_j$, and $\varepsilon$ replaced by $\varepsilon - \sum_{u=1}^j\varepsilon_u$. Note that this is possible since 
\begin{equation}
\varepsilon - \sum_{u=1}^j\varepsilon_u = \varepsilon -\sum_{u=1}^j (2^uk)^{-1/400}\geq\varepsilon- k^{-1/400}\sum_{u\geq 1}2^{-u/400} \geq (2^{j+1}k)^{-1/400} = f(L_{j+1})^{-1/40},    
\end{equation}
provided $k$ is chosen large engouh w.r.t. $\varepsilon$. Using that $\delta L_{j+1}\leq L_j < 2\delta L_{j+1}$ by definition of $j$, this yields
\begin{equation}\label{eq: v chain last inequality}
\begin{split}
v(\rho_j, L_j)&\leq v(\rho_{j+1}, L_{j+1}) +  2\frac{L_j}{L_{j+1}}+\left\lceil \frac{L_{j+1}}{L_j} -1 \right\rceil \frac{f(L_j)}{L_{j+1}}  +2\left( \left\lceil \frac{L_{j+1}}{L_j} \right\rceil -1 \right)^2  e^{-f(L_j)^{1/40}} 
\\
&\leq v(\rho_{j+1}, L_{j+1}) +4\delta + \delta^{-1}\frac{(2^jk)^{1/10}}{2^{j-1}k}+2\delta^{-2}\exp(-{(2^jk)}^{1/400})
\\
&\leq v(\rho_{j+1}, L_{j+1}) +4\delta + 2\delta^{-1}k^{-9/10}+2\delta^{-2}\exp(-{k}^{1/400})
\\
&\leq v(\rho_{j+1}, L_{j+1}) +5\delta 
\end{split} 
\end{equation}
if we choose $k$ large enough w.r.t.~$\delta$. 
Putting~\eqref{eq: v chain inequality} and~\eqref{eq: v chain last inequality} together, we obtain 
\begin{equation}
\begin{split}
 v(\rho,k)&\leq v(\rho+\varepsilon,k')+5\delta +\sum_{i=0}^{j-1}\left(L_i^{-9/10}+2e^{-L_i^{1/400}} \right)
 \\
 &\leq v(\rho+\varepsilon,k')+5\delta +\sum_{i=0}^{j-1}\left(k^{-9/10}2^{-9i/10}+2e^{-(1+ i (\log 2)/400)k^{1/400}} \right)
  \\
 &\leq v(\rho+\varepsilon,k')+6\delta
\end{split}
\end{equation}
using $2^{x}=e^{x\log 2}\geq 1+x\log 2$ in the second inequality, and if we choose again $k$ large enough w.r.t.~$\delta$. 
As $\delta >0$ can be taken arbitrarily small,~\eqref{eq: limit inequality in rho} follows from this and~\eqref{eq: v rho delta inequality}. 
\\


\noindent
Finally, consider the family of intervals $(I_\rho)_{\rho \in J}$, where 
\begin{equation}
    I_\rho = \left( \liminf_{L \rightarrow \infty} v(\rho, L), \limsup_{L \rightarrow \infty} v(\rho, L) \right).
\end{equation}
Note that $(I_\rho)_{\rho \in J}$ are disjoint by \eqref{eq: limit inequality in rho}. Moreover, since $I_\rho \subset [-1,1]$ for all $\rho \in J$, we have that
\begin{equation}
    \sum_{\rho \in J} |I_\rho| < 2.
\end{equation}
Therefore, the set $\left\{ \rho \in J : |I_\rho| \neq 0 \right\}$ must be countable. 
This set is precisely those $\rho$ such that ${v}(\rho) = \lim_{L \rightarrow \infty} v(\rho, L)$ does not exist, so $J \setminus \widebar{J}$ is countable.

\end{proof}

\subsection{Deviation Lemma and proof of Theorem \ref{thm:moregeneral}}\label{subsec:speedproof}
For all $\rho \in J$, denote by ${v}^-(\rho)$ and ${v}^+(\rho)$ the limits from below and above, respectively, of ${v}(\rho')$ as $\rho' \rightarrow \rho$, i.e. 
\begin{equation} \label{def:vbar+-}
    \begin{split}
        {v}^+ (\rho) &= \lim_{(\rho_n') \searrow \rho, \rho_n'\in \widebar{J}} {v}(\rho'_n) \\
        {v}^- (\rho) &= \lim_{(\rho_n') \nearrow \rho, \rho_n' \in \widebar{J}} {v}(\rho'_n).
    \end{split}
\end{equation}
Note that since $J\setminus \widebar{J}$ is at most countable, such sequences $(\rho_n')_{n\geq 1}$ exist, and that by monotonicity of ${v}$, these limits are well-defined, in the sense that they do not depend on the choice of the sequences $(\rho_n')$.

In this subsection, we prove Theorem \ref{thm:moregeneral} by using an upper bound on the probability of the following deviation event in Lemma~\ref{lem: generalised E_n^delta} (then coupled with the Borel-Cantelli Lemma), and the fact that ${v}^+(\rho) = {v}^{-}(\rho) =v(\rho)$ for all $\rho \in J$ but at most countably many (a direct consequence of Proposition~\ref{prop: liminf limsup}).

For all $\rho,\delta > 0$, define the event 
    \begin{equation} \label{eq: def generalised E_n^delta}
           E_n^\delta = \left\{\frac{X_n}{n} \notin \left( {v}^-(\rho) - \delta, {v}^+(\rho) + \delta \right)\right\}.
    \end{equation}

\begin{lemma}[Deviation Lemma] \label{lem: generalised E_n^delta}
    Let $\rho \in J$ and $\delta >0$. Then there exists $k_0=k_0(\rho, \delta)\in \mathbb{N}$ so that for all 
   integers $n\geq L> k_0$ with $n\geq 12L/\delta$ and all  $f (L)\in [(\log L)^{90}, L^{1/10}]$, 
    \begin{equation}\label{eq: generalised Endeltabound}
        \dP^\rho(E_n^\delta )\leq 2\left( \left\lceil \frac{n}{L} \right\rceil -1 \right)^2 \exp \left(-f(L)^{1/40} \right) + 2\exp \left(\frac{ - \delta^2 n}{2^9 L } \right).
    \end{equation} 
\end{lemma}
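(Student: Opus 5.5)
We split $E_n^\delta$ into an upper deviation $\{X_n/n \geq v^+(\rho)+\delta\}$ and a lower deviation $\{X_n/n \leq v^-(\rho)-\delta\}$. Each will be bounded by $(\lceil n/L\rceil-1)^2 e^{-f(L)^{1/40}} + \exp(-\delta^2 n/(2^9L))$. We only treat the upper one; the lower is symmetric, using the second form of Proposition~\ref{prop: many to one coupling}, in which $X^{(1)}\sim\dP^{\rho-\varepsilon,L}$ sits below $X^{(2)}\sim\dP^{\rho,n}$.

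\textbf{Step 1: choice of the comparison density.} Since $J\setminus\widebar J$ is countable, we pick $\rho'\in\widebar J$ with $\rho<\rho'$ and $v(\rho')<v^+(\rho)+\delta/6$, which is possible by definition of $v^+$. Set $\varepsilon=\rho'-\rho>0$. Since $v(\rho',L)\to v(\rho')$, we choose $k_0$ so large that for $L>k_0$:
\begin{itemize}
\item $v(\rho',L)\leq v(\rho')+\delta/6$;
\item $\varepsilon\geq f(L)^{-1/40}$ (since $f(L)\geq(\log L)^{90}\to\infty$);
\item $L>L_0$;
\item $\rho>f(L)^{-1/40}$.
\end{itemize}
All of these depend only on $\rho$ and $\delta$ through $\rho'$.

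\textbf{Step 2: coupling.} Note that $X$ under $\dP^\rho$ coincides with $X^{\rho,n}$ on $[0,n]$. Apply Proposition~\ref{prop: many to one coupling} to get $\dQ_{n,L}$ with $X^{(1)}\sim\dP^{\rho,n}$ and $X^{(2)}\sim\dP^{\rho',L}$. Outside an event of probability at most $(\lceil n/L\rceil-1)^2e^{-f(L)^{1/40}}$ we have
\[
X^{(1)}_n < X^{(2)}_n + \lceil n/L-1\rceil f(L) \leq X^{(2)}_n + n f(L)/L .
\]
Since $f(L)\leq L^{1/10}$, the ratio $f(L)/L$ is below $\delta/6$ once $k_0$ is large. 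Hence
\[
\dP^\rho\big(X_n/n\geq v^+(\rho)+\delta\big) \leq (\lceil n/L\rceil-1)^2e^{-f(L)^{1/40}} + \dP^{\rho',L}\big(X_n/n \geq v(\rho',L)+\delta/2\big),
\]
using $v^+(\rho)+\delta-\delta/6 \geq v(\rho',L)+\delta/2$.

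\textbf{Step 3: concentration for the finite-range walk.} Write $m=\lfloor n/L\rfloor$. Then $X_{mL}=\sum_{i=1}^m Y_i$, where $Y_i=X_{iL}-X_{(i-1)L}$ are i.i.d. under $\dP^{\rho',L}$ by the resampling and stationarity~\ref{pe:stationary}, up to the spatial shift from~\ref{pe:markov}. Each $Y_i$ satisfies $|Y_i|\leq L$ and has mean $Lv(\rho',L)$. The leftover satisfies $|X_n-X_{mL}|\leq L\leq \delta n/12$, using $n\geq 12L/\delta$. The mean discrepancy is also controlled: $|mLv-nv|\leq L\leq\delta n/12$.

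So it suffices to bound $\dP(\sum_i (Y_i - \mathbb{E} Y_i)\geq \delta n/3)$. Hoeffding's inequality with range $2L$ gives
\[
\exp\!\Big(-\frac{2(\delta n/3)^2}{m(2L)^2}\Big) \leq \exp\!\Big(-\frac{\delta^2 n}{18L}\Big),
\]
which is well within $\exp(-\delta^2 n/(2^9L))$.

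\textbf{Main obstacle.} The only delicate point is the ordering of constants: $\rho'$ must be chosen first, depending on $\rho$ and $\delta$; only then $k_0$, so that the requirement $\varepsilon\geq f(L)^{-1/40}$ in Proposition~\ref{prop: many to one coupling} holds uniformly over all admissible $f$. In the lower case, one must also check that $\rho-\varepsilon\in J$ and $\rho-\varepsilon>f(L)^{-1/40}$, which again holds for $k_0$ large.
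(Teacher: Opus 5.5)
Your proof is correct and follows essentially the same route as the paper. Like the paper, you split $E_n^\delta$ into upper and lower deviations, use Proposition~\ref{prop: many to one coupling} to dominate $X$ under $\dP^{\rho}$ (which agrees with $\dP^{\rho,n}$ on $[0,n]$) by a slightly denser finite-range walk, and conclude with an Azuma/Hoeffding bound over the i.i.d.\ $L$-blocks. The only real difference is that you apply the proposition directly with the fixed gap $\varepsilon=\rho'-\rho$, which is admissible for large $L$ since $f(L)^{-1/40}\to 0$ (taking $\rho'$ close enough to $\rho$ that $\varepsilon<1$); the paper instead uses $\varepsilon_L=f(L)^{-1/40}$ and then invokes (\ref{pe:monotonicity}) to compare with density $\rho+\varepsilon_0$, so your version is a harmless simplification.
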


With this lemma, we will proceed with the proof of Theorem \ref{thm:moregeneral}.
\begin{proof}[Proof of Theorem~\ref{thm:moregeneral}]
 
Let $\rho$ be fixed, such that ${v}$ is defined and continuous at $\rho$ (on $\widebar{J}$). This potentially eliminates at most countably many options for $\rho$, since ${v}$ is non-decreasing by Proposition \ref{prop: liminf limsup}, hence we can afford this continuity restriction. 
We thus have  ${v}^-(\rho) = {v}(\rho) = {v}^+(\rho)$. Now, for any fixed $\delta >0$, using Lemma \ref{lem: generalised E_n^delta} with $L = \lfloor \sqrt{n} \rfloor$ (and, say, $f(x)  = x^{1/10}$), we easily derive that $\dP^\rho (E_n^\delta)$ decays faster than any polynomial in $n$ so that

\begin{equation}
  \sum_{n = 1}^\infty \dP^\rho (E_n^\delta) < \infty.
\end{equation}
Thus by Borel-Cantelli's Lemma, we have that almost surely,  $E_n^\delta$ only happens for finitely many $n$. 	This holds almost surely jointly for all $\delta \in \{\frac{1}{m}, m \in \dN \}$, which implies that $\frac{X_n}{n} \rightarrow v(\rho)$ as $n \rightarrow \infty$.

As for the monotonicity of $v$, it follows from Propositions 3.3 (restricted to $\rho \in \widebar{J}$) and 3.4 in~\cite{CKKR24}, which apply to any setup satisfying  the properties~\ref{pe:markov}-\ref{pe:density} and the conditions~\ref{pe:densitychange}-\ref{pe:nacelle}. 
 
 Note that the changes made to \ref{pe:nacelle} between~\cite{CKKR24} and ours do not impact the results. In particular, the changes made impacts equations (5.78), (5.81), and (5.83) in \cite{CKKR24}, but as $\delta'$ is exponential in $\ell$ and what $\delta'$ is being compared to in all of these equations is exponential in a superlinear function of $\ell$, all of these equations still hold. 
   
\end{proof}
\noindent
All that remains now is to prove Lemma \ref{lem: generalised E_n^delta}. 
\begin{proof}[Proof of Lemma \ref{lem: generalised E_n^delta}] Fix $\rho \in J$, $\delta >0$. 
    
    We will split $E_n^\delta$ into two events such that $E_n^\delta = E_n^{\delta, +} \cup E_n^{\delta, -}$, where
    \begin{equation}
        \begin{split}
            E_n^{\delta, +} &= \left\{ \frac{X_n}{n} \geq {v}^+(\rho) + \delta \right\} \\
            E_n^{\delta, -} &= \left\{ \frac{X_n}{n} \leq {v}^-(\rho) - \delta \right\}
        \end{split}
    \end{equation}
    We will only bound the probability of $E_n^{\delta, +}$, as the case of $E_n^{\delta, -}$ is symmetric.  Remark first that since ${v}^+(\rho)$ is the limit of ${v}(\rho')$ as $\rho' \downarrow \rho$ on $\widebar{J}$, there must exist some $\varepsilon_0 > 0$ such that ${v}(\rho + \varepsilon_0)$ exists  (as $J\setminus \widebar{J}$ is at most countable) and
\begin{equation} \label{eq: generalised Endelta+bound3 part 1}
    {v}(\rho + \varepsilon_0) < {v}^+(\rho) + \delta/6.
\end{equation}

Now, recall from \eqref{def: v(rho)} that 
\begin{equation}
    {v}(\rho + \varepsilon_0) = \lim_{L \rightarrow \infty} v(\rho + \varepsilon_0, L)
\end{equation}
so for $k_0$ (and therefore $L$)  large enough, 
\begin{equation} \label{eq: generalised Endelta+bound3 part 2}
    \dE^{\rho + \varepsilon_0,L} \left[ \frac{X_L}{L} \right] = v(\rho + \varepsilon_0, L) < {v}(\rho + \varepsilon_0) + \delta/6 < {v}^+(\rho) + \delta/3.
\end{equation}
    For $L\geq 1$, write $\varepsilon_L = f(L)^{-1/40}$ where $f$ is as in the statement. 
Note that by \ref{pe:monotonicity} and the fact that $\varepsilon_L < \varepsilon_0$ for large enough $L$, we have that 
\begin{equation} \label{eq: generalised Endelta+bound3 part 3}
    \dE^{\rho + \varepsilon_0,L} \left[ \frac{X_{L}}{L} \right] \geq \dE^{\rho + \varepsilon_L,L} \left[ \frac{X_{L}}{L} \right].
\end{equation} 

Additionally, note as in~\eqref{eq: speed range adjustor} (with one less term as the densities are the same here) that 
\begin{equation}
    \dE^{\rho + \varepsilon_L, L} \left[ \frac{X_n}{n} \right]\leq \dE^{\rho + \varepsilon_L, L} \left[ \frac{X_L}{L} \right] + 2\frac{n- L\lfloor \frac{n}{L}\rfloor }{n} \leq \dE^{\rho + \varepsilon_L, L} \left[ \frac{X_L}{L} \right] + \delta/6,
\end{equation}
using $n- L\lfloor \frac{n}{L}\rfloor \leq L-1 \leq n\delta/12$. 
Along with \eqref{eq: generalised Endelta+bound3 part 2} and \eqref{eq: generalised Endelta+bound3 part 3}, this yields that
\begin{equation}
    \dE^{\rho + \varepsilon_L, L} \left[ \frac{X_n}{n} \right] \leq {v}^+(\rho) + \delta/2.
\end{equation}

Hence~\eqref{eq: generalised Endeltabound} will follow once we show that, for sufficiently large $n$ and $L$, 
\begin{equation}\label{eq: generalised Endelta+bound2}
\dP^{\rho + \varepsilon_L, L}\left( \frac{X^{}_n}{n} \geq \dE^{\rho + \varepsilon_L, L} \left[ \frac{X^{}_n}{n} \right] + \delta/4 \right) < \exp \left(\frac{ - \delta^2 n}{2^9L } \right),
\end{equation} 
and there exists a coupling $\mathbb{Q}$ of $X^{(1)}\sim \mathbb{P}^{\rho, n} $ (or $X^{(1)}\sim \mathbb{P}^{\rho} $, which is effectively the same for the below) and $X^{(2)}\sim \mathbb{P}^{\rho+\varepsilon_L, L} $ such that
\begin{equation}\label{eq: generalised Endelta+bound1}
\mathbb{Q} \left( \frac{X^{(1)}_n}{n} \geq \frac{X^{(2)}_n}{n} + \delta/4 \right) 
    \leq \left( \left\lceil \frac{n}{L} \right\rceil -1 \right)^2 e^{-f(L)^{1/40}}.
\end{equation}

\textbf{Proof of (\ref{eq: generalised Endelta+bound2}) }
We use Azuma's inequality.
Let $Y_k = X_{kL} - X_{(k-1)L} - \Eannealed^{\rho + \varepsilon_L, L}[X_L]$ for $1 \leq k \leq \lfloor n/L \rfloor$, where $X\sim \mathbb{P}^{\rho+\varepsilon_L,L}$. 
	Then $(Y_k)_{1 \leq k \leq \lfloor n/L\rfloor }$ are i.i.d. and $|Y_k|$ is bounded by $2L$. 
	Now, define also $Y_{ \lfloor n/L \rfloor +1} = X_{n} - X_{L\lfloor n/L \rfloor} - \Eannealed^{\rho + \varepsilon_L, L}[X_{n - L \lfloor n/L \rfloor}]$.
	$Y_{\lceil n/L \rceil}$ is also independent from all other $Y_k$ and is also bounded by $2L$. 
	Note that for all $1 \leq k \leq  \lfloor n/L \rfloor +1$, $\Eannealed[Y_k] = 0$. 
	
	Then by Azuma's inequality,
	\begin{equation}
		\begin{split}
			\Pannealed^{\rho + \varepsilon_L, L} \left( \frac{X_n}{n}  - \Eannealed^{\rho + \varepsilon_L, L}  \left[ \frac{X_{n}}{n} \right] 
				> \frac{\delta}{4} \right)
				&= \Pannealed^{\rho + \varepsilon_L, L}  \left(\sum_{k=1}^{\lfloor n/L \rfloor +1} Y_{k} > n\delta/4 \right) \\
				&\leq \exp \left(\frac{ - (\delta/4)^2 n^2}{2 \sum_{k = 1}^{ \lfloor n/L \rfloor +1} (2L)^2} \right) \\
				&\leq \exp \left(\frac{ - \delta^2 n^2}{2^6 (2L)^2(\frac{n}{L}+1)} \right) \\
				&\leq \exp \left(\frac{ - \delta^2 n^2}{2^9 nL } \right) \\
				&\leq \exp \left(\frac{ - \delta^2 n}{2^9 L } \right).
		\end{split}
	\end{equation}

\textbf{Proof of (\ref{eq: generalised Endelta+bound1}) }
This will hold once we ensure that we can apply Proposition \ref{prop: many to one coupling} here (with $\varepsilon=\varepsilon_L$, all other variables being the same). To this end, note that 
\begin{equation}
    \left\{ \frac{X^{(1)}_n}{n} \geq \frac{X^{(2)}_n}{n} + \delta/4 \right\} = \left\{ X^{(2)}_n - X^{(1)}_n \leq  - \delta n/4 \right\}
\end{equation}
and
\begin{equation}
     - \left\lceil \frac{n}{L} - 1 \right\rceil f(L) \geq    - \left\lceil \frac{n}{L} - 1 \right\rceil L^{1/10}\geq - \delta n / 4
\end{equation}
for any given $\delta$, choosing $L$ sufficiently large.

\end{proof}

Lemma~\ref{lem: generalised E_n^delta} yields the following estimate on ballistic deviations of the random walker $X$:

\begin{Cor}\label{lemma: ballisticity} \label{cor: ballisticity}
    Let $\rho\in J$. 
    For any $v_\star \in (0, v^-(\rho))$, there exists $c>0$ such that for all $K \in \dN$ large enough, we have that 
    \begin{equation} \label{eq: ballisticity}
        \dP^\rho \left( \exists n \in \dN : X_n < n v_\star - K \right) \leq 2 \exp\left( -c K^{1/1000} \right).
    \end{equation}
    For any  $v_\star \in (v^+(\rho),1)$, we obtain the same bound for 
    $\dP^\rho \left( \exists n \in \dN : X_n > n v_\star + K \right)$.
\end{Cor}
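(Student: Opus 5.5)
The plan is to derive the bound from Lemma~\ref{lem: generalised E_n^delta} by a union bound over $n$, together with the trivial observation that for small $n$ the event is impossible. Fix $v_\star<v^-(\rho)$ and set $\delta:=(v^-(\rho)-v_\star)/2>0$. The event $\{X_n<nv_\star-K\}$ is contained in $\{X_n/n\le v^-(\rho)-\delta\}\subseteq E_n^\delta$ for every $n$, since $nv_\star-K< n(v^-(\rho)-\delta)$. Moreover $X_n\ge -n$ always, so $X_n<nv_\star-K$ forces $n(1+v_\star)>K$, i.e.\ $n> K/2$ (using $v_\star\in(0,1)$). Hence
\begin{equation*}
\dP^\rho\left(\exists n : X_n<nv_\star-K\right)\le \sum_{n> K/2}\dP^\rho(E_n^{\delta,-}).
\end{equation*}

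For each $n>K/2$ we apply Lemma~\ref{lem: generalised E_n^delta} with a scale $L=L(n)$ chosen as a power of $n$, say $L=\lfloor n^{1/2}\rfloor$, and $f(L)=L^{1/10}$. For $K$ large, and hence $n$ large, the constraints $L>k_0(\rho,\delta)$ and $n\ge 12L/\delta$ hold. The two error terms are then
\begin{equation*}
2n\,e^{-n^{1/800}}\qquad\text{and}\qquad 2e^{-\delta^2 n^{1/2}/2^9}.
\end{equation*}
Both are bounded by $e^{-c' n^{1/800}}$ for $n$ large. Summing over $n>K/2$ gives $\sum_{n>K/2}e^{-c'n^{1/800}}\le C\,e^{-c''K^{1/800}}$, since the tail of a stretched-exponential series is comparable to its first term up to a polynomial factor, which is absorbed into the exponent. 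This is even stronger than the claimed $2\exp(-cK^{1/1000})$, so there is slack to absorb all constants. Only the $E_n^{\delta,-}$ half of the lemma is needed here; it is the symmetric counterpart treated in the lemma's proof.

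The second statement, for $v_\star\in(v^+(\rho),1)$, is identical. We set $\delta=(v_\star-v^+(\rho))/2$ and use $E_n^{\delta,+}$. The event $X_n>nv_\star+K$ again requires $n>K$, since $X_n\le n$.

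The only point needing care, and the one I expect to be the main obstacle, is checking that the hypotheses of Lemma~\ref{lem: generalised E_n^delta} hold uniformly along the sum with a single $k_0$. This works because $\delta$ is fixed once $v_\star$ is fixed. We also need the stretched-exponential decay in $n$, as opposed to mere summability, so that the tail sum starting at $n\approx K$ decays in $K$. With $L$ a fixed power of $n$ both requirements hold, and the constant $c$ depends only on $\rho$ and $v_\star$ through $\delta$ and $k_0$.
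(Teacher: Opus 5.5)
Your proposal is correct and follows the paper's proof essentially step for step. It uses the same $\delta=(v^-(\rho)-v_\star)/2$, the same reduction to $n> K/2$ via $X_n\ge -n$, the same application of Lemma~\ref{lem: generalised E_n^delta} with $L=\lfloor\sqrt n\rfloor$ and $f(L)=L^{1/10}$, and a union bound whose stretched-exponential tail sum is controlled by an integral. One trivial slip, which changes nothing: in the second statement $v_\star$ may be negative, so $X_n\le n$ only gives $n>K/(1-v_\star)\ge K/2$ rather than $n>K$.
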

\begin{proof}
We only treat the case  $v_\star \in (0, v^-(\rho))$ as the case $v_\star \in (v^+(\rho),1)$ works in the exact same fashion. 
    Let $\delta = \frac{{v}^-(\rho) - v_\star}{2}$ and $K > 4k_0^2$, where $k_0$ is the minimum value for $n$ and $L$ given in Lemma \ref{lem: generalised E_n^delta}.

    Consider the family of events $(B_n)_{n \in \dN}$ where 
    \begin{equation} \label{eq: bad n ballisticity event}
        B_n = \left\{ X^\rho_n < \left( {v}^-(\rho) - \delta \right) n \right\}.
    \end{equation}
    Note that 
    \begin{equation} \label{eq: ballisticity bad event inclusion}
        B := \left\{ \exists n \in \dN : X_n < n v_\star - K \right\} \subseteq  \bigcup_{n \geq \frac{K}{2}} B_n.
    \end{equation}
    We do not need to consider $B_n$ for $n \leq K/2$ because $v_\star \leq 1$ and the random walker only takes one step at a time, so we cannot have $X_n < n v_\star - K$ for $n \leq K/2$.
    
    Now, we will use Lemma \ref{lem: generalised E_n^delta} with $L=\lfloor \sqrt{n}\rfloor$, $f(L) = L^{1/10}$ and the above $\delta$ in order to bound the probability of $B_n$. We find that for all $n\geq K/2$ (up to increasing $K$ so that $2n^2\exp(-\lfloor \sqrt{n}\rfloor^{1/400})<\exp(-n^{1/1000})$ for all $n\geq K/2$),
    \begin{equation} \label{eq: ballisticity bad n bound}
    \begin{split}
        \dP^\rho(B_n) &\leq \dP^\rho\left( X_n < \left( {v}^-(\rho) -  \delta \right)n \right) \\
        &\leq2\left( \left\lceil \frac{n}{L} \right\rceil -1 \right)^2
            \exp \left(-f(L)^{1/40} \right)
            + 2 \exp\left(-\frac{\delta^2 n}{2^9 L}\right) \\
        &\leq \exp\left( -n^{1/1000} \right) + 2 \exp \left( -\frac{\delta^2 \sqrt{n}}{2^9} \right).
    \end{split}
    \end{equation}

    Using \eqref{eq: ballisticity bad event inclusion}, a union bound on $n\geq K/2$, \eqref{eq: ballisticity bad n bound}, and an integral bound for the sum below, for large enough $K$ (only depending on $\delta$, so that the said integral bound applies), we obtain that
    \begin{equation}
        \begin{split}
            \dP^\rho (B) &\leq \sum_{n \geq K/2} \dP^\rho (B_n)
            \leq \sum_{n \geq K/2}  \exp\left( -n^{1/1000} \right) + 2 \exp \left( -\frac{\delta^2 \sqrt{n}}{2^9} \right) \\
            &\leq \int_{K/2 \,-1}^\infty \frac{1}{2000}x^{-1+1/1000} e^{-x^{1/1000}/2}+\frac{\delta^2}{2^{11}\sqrt{x}}\exp\left(-\frac{\delta^2\sqrt{x}}{2^{10}}\right) dx
            \\
            &\leq \exp\left( -\frac{(K/2-1)^{1/1000}}{2} \right) + \exp\left( -\frac{\delta^2 (K/2-1)^{1/2}}{2^{10}} \right).
        \end{split}
    \end{equation}
    The conclusion follows. 
\end{proof}

\section{Proof of Theorem~\ref{thm:main} and Corollary~\ref{cor:APCRWsum}}\label{sec:onedensity}
\subsection{Proof of Theorem~\ref{thm:main}}\label{subsec:onedensity}
In this section, we prove Theorem~\ref{thm:main}, which specifically concerns the APCRW model defined in Section \ref{sec: model}. We first state that the APCRW satisfies all the environment properties and coupling conditions of Section~\ref{sec:defs}.

\begin{lemma}\label{lem:P1P4forAPCRW}
For all choice of $q\in [0,1]$ and $\alpha \in (0,1)$, the APCRW with drift $q$ and laziness $\lazy$ satisfies the properties~\ref{pe:markov}-\ref{pe:density} with $J=\mathbb{R}^+$, $\mu_\rho=\Poisson(\rho)^{\otimes\dZ}$ for all $\rho>0$, and for all $\eta_0\in \Sigma$, $\mathbf{P}^{\eta_0}=\mathbf{P}^{\eta_0,q}_{\text{APCRW}}$.
\end{lemma}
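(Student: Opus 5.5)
We check the four properties one at a time. The main tool is the graphical construction of the APCRW: every particle present at time $0$ carries its own i.i.d. sequence of increments in $\{-1,0,1\}$, with probabilities $\alpha(1-q),\,1-\alpha,\,\alpha q$, and $\eta_t(x)$ counts the particles located at $x$ at time $t$. The process is well defined for every $\eta_0\in\Sigma$, because each particle moves at most one site per step.

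\textbf{Property (P.1).} Given $\eta_s$, the future increments are independent of the past and identically distributed for all particles. Particles are indistinguishable, so only the counts $\eta_s(\cdot)$ matter, and the Markov property follows. The transition mechanism is the same at every site, so it commutes with the spatial shift $x+\cdot$. This gives the required translation invariance; no symmetry is needed, since (P.1) no longer asks for it.

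\textbf{Property (P.2).} We use the standard Poisson mapping theorem. Suppose the particles at time $0$ form a Poisson point process with intensity $\rho$ times counting measure, and each particle is displaced independently according to a kernel $p(\cdot)$. Then the displaced particles form a Poisson process with intensity $\sum_y\rho\,p(x-y)=\rho$. Hence $\mu_\rho=\Poisson(\rho)^{\otimes\dZ}$ is invariant for one step, and by the Markov property the law of the whole path is shift-invariant in time.

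\textbf{Property (P.3).} For the quenched part, take $\eta_0'\preccurlyeq\eta_0$ and realise $\eta'$ by using only a sub-collection of the particles of $\eta_0$: at each site $x$, keep $\eta'_0(x)$ of the $\eta_0(x)$ particles, with the same increments. Then $\eta'_t\preccurlyeq\eta_t$ for all $t$. For the annealed part, $\Poisson(\rho)=\Poisson(\rho')+\Poisson(\rho-\rho')$ with independent summands. This gives a monotone coupling of $\mu_{\rho'}$ and $\mu_\rho$ sitewise, which combines with the quenched coupling.

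\textbf{Property (P.4).} The count $\eta_0([0,\ell-1])$ is $\Poisson(\rho\ell)$. A Chernoff bound gives
\[
\mathbf{P}\big(|\Poisson(\rho\ell)-\rho\ell|\ge\varepsilon\ell\big)\le 2\exp\Big(-\frac{\varepsilon^2\ell^2}{2(\rho\ell+\varepsilon\ell)}\Big).
\]
For $\varepsilon<1$ the exponent is at least $\varepsilon^2\ell/(2(\rho+1))$.

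This last step is the main obstacle. The constant $\Cr{densitydev}=1/(2(\rho+1))$ depends on $\rho$, so it is uniform only over bounded ranges of $\rho$, whereas (P.4) asks for a single constant with $J=\mathbb{R}^+$. We would interpret the statement as holding on every bounded open $J\subset\mathbb{R}^+$, in line with the definition of an environment requiring $J$ bounded; every result using (P.4) only needs it on such a $J$. With that reading, a fixed $\Cr{densitydev}$ works on each $J$, and all four properties are established.
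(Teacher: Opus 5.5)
Your proof is correct and follows essentially the paper's route: the paper simply refers (P.1), (P.3) and (P.4) to the drift-free case \cite[Lemma B.1]{CKKR24}. It proves (P.2) by splitting each $\Poisson(\rho)$ count into independent thinnings $l(x),c(x),r(x)$ with means $\alpha(1-q)\rho,(1-\alpha)\rho,\alpha q\rho$ and recombining $\eta_1(x)=r(x-1)+c(x)+l(x+1)$, which is your displacement-theorem step written out by hand. Your caveat about (P.4) is also well founded: for fixed $\varepsilon,\ell$ the fluctuations of $\Poisson(\rho\ell)$, of order $\sqrt{\rho\ell}$, exceed $\varepsilon\ell$ with probability tending to $1$ as $\rho\to\infty$, so no single constant works on $J=\mathbb{R}^+$ and the lemma must be read on bounded $J$, as in the paper's own definition of an environment and its Appendix.
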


\begin{proof}
The proofs of~\ref{pe:markov},~\ref{pe:monotonicity} and~\ref{pe:density} are the same as in Lemma B.1\tmpdel.\tmpend in~\cite{CKKR24}. For~\ref{pe:stationary}, the only meaningful change in the proof is the way the Poisson process is split into independent copies. 
We split $\eta_0(x)$ into $\eta_0(x) = l(x) + c(x) + r(x)$, where $l(x)$ is the number of particles at $x$ at time 0 taking their first step to the left, $c(x)$ particles starting at $x$ who remain at $x$ in the first time-step, and $r(x)$ the particles starting at $x$ that take their first step to the right. We now have that $l(x) \sim \Poisson(\alpha(1-q)\rho)$, $c(x) \sim \Poisson((1-\alpha)\rho)$, and $r(x) \sim \Poisson(\alpha q \rho)$, where $(l(x), c(x), r(x))_{x \in \dZ}$ are all independent. Hence, we have that $\eta_1(x) = r(x-1) + c(x) + l(x+1) \sim \Poisson(\rho)$ for all $x \in \dZ$, with $(\eta_1(x))_{x \in \dZ}$ independent. Thus, $\eta_1 \sim \mu_\rho$. 
\end{proof}
\begin{proposition}\label{prop:conditionsforAPCRW}
   For all choice of $q\in [0,1]$ and $\alpha \in (0,1)$, the APCRW with drift $q$ and laziness $\lazy$ satisfies the conditions~\ref{pe:densitychange}-\ref{pe:nacelle}.
\end{proposition}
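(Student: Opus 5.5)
The plan is to follow the structure of \cite[Appendix B]{CKKR24}, where these conditions are verified for the PCRW ($q=1/2$). The new ingredient is that every comparison is made in a frame moving with the particles' mean displacement $d=\alpha(2q-1)$ per step. Since particles are independent, all conditions reduce to statements about independent lazy drifted walks $Z_t = d t + M_t$, where $M$ is a martingale with bounded increments. We take $\nu=1$. Since $|d|\le 1$, the margins $2\nu t$, $4\nu t$, $6\nu t$ in the conditions dominate the deterministic shift $|d|t$. This is the reason the conditions are stated with such generous margins.

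\textbf{Condition~\ref{pe:densitychange}.} Fix an interval $I'$ of length $\ell'\le\sqrt t$ in $[-H+2t,H-2t]$. Then $\eta_t(I')$ is a sum of independent Bernoulli variables, one per initial particle, with success probability $\mathbf{P}(x+Z_t\in I')$. Particles starting outside $[-H,H]$ contribute only with probability $e^{-ct}$, by Azuma/Hoeffding, since reaching $I'$ requires a displacement larger than $t+|d|t$ away from the mean. The expected count is $\sum_x\eta_0(x)p_t(y-x)$ summed over $y\in I'$. The heat kernel is $p_t(\cdot)=\mathbf{P}(Z_t=\cdot)$, centred at $dt$ with spread $\sqrt t$. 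This is where Lemma~\ref{Lemma: Heat Kernel} enters: a local CLT bound $|p_t(z)-p_t(z+1)|\le Ct^{-1}$, together with Gaussian tails around $dt$. Using these, we average $\eta_0$ over blocks of length $\ell$ via summation by parts, which gives mean at most $(\rho+2\varepsilon)\ell'$ provided $\ell^2\varepsilon^{-2}\log^3 t\lesssim t$. A Chernoff bound then yields deviation probability $e^{-c\varepsilon^2\ell'}$, and a union bound over at most $2H$ intervals finishes.

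\textbf{Condition~\ref{pe:drift}.} We label particles and couple them by pairing. Each particle of $\eta'$ in $[-H,H]$ is matched with a distinct particle of $\eta$ at the same site, and matched pairs move identically. The extra particles of $\eta$, and all unmatched particles outside, move independently. Domination can fail in $[-H+2kt,H-2kt]$ only if some outside particle of $\eta'$ enters this window before time $t$. That requires its displacement to exceed $2kt-|d|t\ge kt$ in at most $t$ steps, which it cannot do when $k\ge 2$. For $k=1$, we bound the number of such particles through the Poisson structure and a maximal inequality, obtaining the $20e^{-kt/4}$ bound. I expect this to need a mild boundedness assumption on $\eta_0'$ near $\pm H$, exactly as in \cite{CKKR24}, where the same step was used.

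\textbf{Condition~\ref{pe:couplings}, the main obstacle.} We must couple the two clouds so that at time $t$ the denser one dominates site by site. I would use soft local times (Lemma~\ref{Lemma: Soft Local Times}, as in \cite{HHSST15,PopovTeixeira}). Both particle families are run via a single Poisson point process on $\mathbb{Z}\times\mathbb{R}_+$. The soft local time at time $t$ of family $\eta$ is $G(y)=\sum_x\eta_0(x)p_t(y-x)$, and the analogous quantity $G'$ is defined for $\eta'$. The domination $\eta_t\succcurlyeq\eta'_t$ on a site $y$ holds whenever $G(y)\ge G'(y)$ and the random soft local times concentrate. The density gap $\varepsilon/2$ on all intervals of length $\ell=t^{1/4}$, combined with the heat kernel regularity, gives deterministically $G(y)-G'(y)\ge c\varepsilon$ for $y\in[-H+4t,H-4t]$. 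The drift only translates both $G$ and $G'$ by the same amount $dt$, so this comparison is drift-free. The delicate part is the fluctuation estimate for the soft local times. These are sums of $\asymp t$ Poisson-weighted kernels, and they must concentrate to precision $\varepsilon$ with probability $1-e^{-c\varepsilon^2t^{1/4}}$, uniformly in $y$. This requires the asymmetric heat kernel bounds and is where most of the work lies. Condition~\ref{pe:compatible} then follows by gluing~\ref{pe:drift} (centre) and~\ref{pe:couplings} (sides), as in \cite{CKKR24}.

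\textbf{Condition~\ref{pe:nacelle}.} We use the coupling of~\ref{pe:drift}, so that~\eqref{eq:penacelleNEW2} holds once $k\ge\Cr{sprinkler2}$. For~\eqref{eq:penacelleNEW}, we force an explicit scenario. Choose one extra $\eta$-particle in $I$ and prescribe a path for it that reaches $x$ at time $\ell$, which has probability at least $\min(\alpha q,\alpha(1-q),1-\alpha)^{\ell}$. Simultaneously, all $\eta'$-particles in $[-3\ell,3\ell]$ (at most $6(\rho+1)\ell$ of them) follow prescribed paths avoiding $x$ at time $\ell$, for instance by moving or staying away from it. Particles farther away cannot reach $x$ in $\ell$ steps. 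Independence of the particles gives probability at least $\Cr{sprinkler1}^{6(\rho+1)\ell}$ for a suitable $\Cr{sprinkler1}(\alpha,q)$.
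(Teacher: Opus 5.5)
There is a genuine gap in your treatment of \ref{pe:drift}. Working in the frame moving with $d$, you conclude that for $k=1$ an outside $\eta'$-particle might enter $[-H+2t,H-2t]$. You then propose a probabilistic bound that you expect to need a boundedness hypothesis on $\eta'_0$ near $\pm H$. That hypothesis is not available. Condition \ref{pe:drift} quantifies over all $\eta_0,\eta'_0$ ordered only on $[-H,H]$, so there is no Poisson structure to exploit either. Moreover, Proposition~\ref{prop: many to one coupling} uses it with $k=1$, realisation by realisation, on configurations known to be ordered only on a window.

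The missing observation, which is the paper's entire proof, is that APCRW particles are lazy nearest-neighbour walks. Hence $|Z_s-Z_0|\le s\le t$, while any particle starting outside $[-H,H]$ is at distance more than $2kt\ge 2t$ from $[-H+2kt,H-2kt]$. Under your pairing coupling the domination event therefore has probability one for every $k\ge1$ and all $\eta_0,\eta'_0$, whatever the drift.

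You need the same fact in \ref{pe:densitychange}. There $\eta_0$ is unconstrained outside $[-H,H]$, so a per-particle bound $e^{-ct}$ cannot be summed over those particles. Their contribution to $\eta_t(I')$ for $I'\subseteq[-H+2t,H-2t]$ is in fact identically zero.

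There is also a counting error in \ref{pe:nacelle}. Making each of up to $6(\rho+1)\ell$ particles of $\eta'$ follow a prescribed $\ell$-step path costs a probability of order $r^{6(\rho+1)\ell^2}$, with $r=\min(\alpha q,\alpha(1-q),1-\alpha)$. This is not of the required form $c^{6(\rho+1)\ell}$. The paper constrains only the last step of the $\eta'$-particles lying in $\{x-1,x,x+1\}$ at time $\ell-1$; each of these avoids $x$ with conditional probability at least $\alpha\min(q,1-q)$. Alternatively, each particle independently avoids $x$ at time $\ell$ with probability at least $1-\max(1-\alpha,\alpha q,\alpha(1-q))$.

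With these two repairs your plan works. Your proof of \ref{pe:densitychange} is a legitimate and more elementary alternative to the paper's route. You use a Chernoff bound on the independent indicators, with the mean controlled by block averaging and a gradient bound on the lazy drifted kernel. The paper instead goes through the sandwich coupling of Lemma~\ref{Lemma: Soft Local Time Coupling} with $\mu_{\rho\pm\varepsilon}$; soft local times are still needed for \ref{pe:couplings}, so the saving is local.

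Your \ref{pe:couplings} is essentially the paper's mechanism. Sandwiching both clouds around a Poisson field of intermediate density $\rho+\varepsilon/2$ amounts to showing $G_{\eta'_0}\le\rho+\varepsilon/2\le G_{\eta_0}$ on a common Poisson process. Note that $G$ carries i.i.d.\ exponential weights, and only its mean equals $\sum_x\eta_0(x)p_t(y-x)$, so the comparison you call deterministic holds only for the means.
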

The proof of Proposition~\ref{prop:conditionsforAPCRW} is deferred to Appendix~\ref{sec:couplingproofs}. 

\begin{Cor}\label{cor:APCRWspeed}
For all choices of $q\in [0,1]$ and $\alpha \in (0,1)$, Theorem~\ref{thm:moregeneral} holds for the APCRW, and the map $\rho \mapsto v(\rho)$ is increasing. 
\end{Cor}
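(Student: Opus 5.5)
This is a direct combination of results already stated. I will check the hypotheses of Theorem~\ref{thm:moregeneral}, apply it, and then import the strict monotonicity argument of~\cite{CKKR24}.

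\textbf{Step 1: hypotheses.} Lemma~\ref{lem:P1P4forAPCRW} shows that the APCRW, for any $q\in[0,1]$ and $\alpha\in(0,1)$, satisfies \ref{pe:markov}-\ref{pe:density}. Here $\mu_\rho=\Poisson(\rho)^{\otimes\dZ}$ and $J=\mathbb{R}^+$. Proposition~\ref{prop:conditionsforAPCRW} gives \ref{pe:densitychange}-\ref{pe:nacelle} with $\nu=1$.

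One technicality: the general framework asks for $J$ to be a bounded open interval. To handle this, I will apply Theorem~\ref{thm:moregeneral} on each $J_m=(1/m,m)$. The constants in the conditions may depend on $J_m$, for instance $\Cr{sprinkler2}$ depends on $J$. Each application leaves out at most countably many densities, so the total exceptional set in $(0,\infty)$ is a countable union of countable sets, hence countable.

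The speed $v(\rho)$ is defined intrinsically as $\lim_L v(\rho,L)$, so it does not depend on $m$. This means the speed functions produced on different $J_m$ agree on overlaps and patch into a single function on $(0,\infty)$. We thus obtain $X_n/n\to v(\rho)$ a.s. for all but countably many $\rho$.

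\textbf{Step 2: monotonicity.} Theorem~\ref{thm:moregeneral} already states that $\rho\mapsto v(\rho)$ is increasing. As noted in its proof, this comes from Propositions~3.3 and~3.4 of~\cite{CKKR24}. Those results need the full set of conditions, including \ref{pe:nacelle} in the present modified form, and these are exactly what Proposition~\ref{prop:conditionsforAPCRW} provides. As in the proof of Theorem~\ref{thm:moregeneral}, I will check that the modified $\delta'$ in \ref{pe:nacelle} is only exponential in $\ell$. This keeps the estimates (5.78), (5.81) and (5.83) of~\cite{CKKR24} valid.

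The strict inequality $v(\rho)<v(\rho')$ for $\rho<\rho'$ in the domain is local. Any two such densities lie in a common $J_m$, so it transfers from the bounded-interval setting without change.

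\textbf{Main obstacle.} The real difficulty is Proposition~\ref{prop:conditionsforAPCRW}, which is assumed here and proved in the appendix. Within this corollary, the only delicate points are the reduction from the unbounded $J=\mathbb{R}^+$ to bounded subintervals and confirming that the drift does not affect the arguments borrowed from~\cite{CKKR24}. The drift is harmless for those arguments because they use the environment only through \ref{pe:markov}-\ref{pe:density} and \ref{pe:densitychange}-\ref{pe:nacelle}, never through any symmetry property.
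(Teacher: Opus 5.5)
Your proposal is correct and follows the paper's proof, which simply combines Theorem~\ref{thm:moregeneral} with Lemma~\ref{lem:P1P4forAPCRW} and Proposition~\ref{prop:conditionsforAPCRW}. Your exhaustion of $(0,\infty)$ by the bounded intervals $J_m=(1/m,m)$ makes explicit a step the paper leaves implicit: the framework, including the uniform constant in \ref{pe:density}, needs a bounded $J$, whereas Lemma~\ref{lem:P1P4forAPCRW} is stated with $J=\mathbb{R}^+$. The exhaustion is valid because the existence of $v(\rho)$, the law of large numbers at $\rho$, and strict monotonicity between two given densities are all local in $\rho$.
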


\begin{proof}
 This comes readily from Theorem~\ref{thm:moregeneral}, Lemma~\ref{lem:P1P4forAPCRW} and Proposition~\ref{prop:conditionsforAPCRW}. 
\end{proof}


In this section, we will prove \Cref{thm:main} by taking \Cref{thm:moregeneral}, and expand it to apply to all but at most one density (i.e. values of $\rho \geq 0$). 
We will do this by adapting the method found in Section 4 of \cite{HHSST15}, which was tailored for the PCRW (that is, our setting in the particular case $q=1-q=1/2$). 
The idea is to use regeneration times of the walk, using the ballistic difference between the random walker and the environment particles (which holds for all but potentially one value of $\rho$ where $v(\rho)$ could align with the speed of the environment particles). 
\\

\noindent
We define this possibly exceptional density as
\begin{equation} \label{eq: rho_0 def}
    \rho_0 = 
    \sup \left \{ \rho : {v} (\rho) < (2q-1)\nonlazy \right \}.
\end{equation}

First, we will deal with densities $\rho > \rho_0$ (which correspond to speeds faster than the environment). 
The other case, where $\rho < \rho_0$, is similar, where we change definitions in the proof below
(starting with the cones defined below in \eqref{eq: def cone up} and \eqref{eq:def cone down} below) 
to account for the gap between the movement of the random walk and the environment particles growing in the other direction. With these definitions altered, the proof remains the same.

Fix $\rho > \rho_0$. Then $v^-(\rho) > (2q-1)\nonlazy$.
Indeed, by Theorem~\ref{thm:moregeneral}, we can select $\rho', \rho'' \in (\rho_0, \rho)$ with $\rho' < \rho''$ such that $v(\rho')$ and $v(\rho'')$ exist.
By \eqref{eq: rho_0 def}, we know that $v(\rho') \geq (2q-1)\nonlazy$. 
We know that $v(\rho') < v(\rho'')$ by the monotonicity given in Corollary \ref{cor:APCRWspeed}. 
Finally, by \Cref{prop: liminf limsup}, $v(\rho'') \leq v^-(\rho)$.
Putting this together, we get that $v^-(\rho) > (2q-1)\nonlazy$.


By the same argument, we can choose some $\widebar{\rho}, \rho_\star\in (\rho_0,\rho)$ such that $\widebar{\rho} < \rho_\star$ and for $\widebar{v} = v(\widebar{\rho})$ and $v_\star = v(\rho_\star)$, 

\begin{equation}
  (2q-1)\nonlazy < \widebar{v} < v_\star < v(\rho').  
\end{equation}


Corollary~\ref{cor: ballisticity} (serving the same purpose as \cite[Eq.(1.7)]{HHSST15}) entails that our random walk will eventually be faster than $v_\star$, and hence than environment particles given our choice of $v_\star$, as desired.
However, Corollary~\ref{cor: ballisticity} only works with an offset $K$, which means we will not be able to apply it directly to a situation like the one depicted in Figure~\ref{fig: Cones}. This is the main reason why a delicate renewal structure is needed. The differences with~\cite{HHSST15} do not essentially modify the argument, but are not immediate either, which is why we make a careful review of the changes below. We first explain how this renewal structure is built. Second, we briefly sketch the proof architecture of~\cite{HHSST15}. Third, we handle the differences between their setting and ours. 
\\

\noindent
\textbf{Renewal structure.} 
In a nutshell, we first wait until the random walker takes a large number of consecutive steps to the right, in order to distance the random walker from the environment particles with which it had interacted. We will then show that the random walk does not later meet those environment particles again. Note that the time at which this happens is not a stopping time.

We will use cones to track the space-time locations of our random walk $Y_n = (X_n, n)$ and the environment particles, henceforth using $Y_n$ to refer to the space-time location of our random walk. 
The reason behind using these cones is that their angle corresponds to the speed $\widebar{v}$, which will allow them to effectively act as a buffer between the walk and the environment particles, as it is unlikely for environment particles starting at the base of the cone (or to its left) to remain in it. 
In general, we will write
\begin{equation}\label{eq: def cone up}
    \angle(x,n) = \left\{ (y,t) \in \dZ^2: t \geq n, y \geq \widebar{v}t \right\}
\end{equation} 
and
\begin{equation}\label{eq:def cone down}
    \turnangle(x,n) = \left\{ (y,t) \in \dZ^2: t \leq n, y < \widebar{v}t \right\}.
\end{equation} 
These cones are illustrated in \Cref{fig: Cones}.
\begin{figure}
    \centering
    \includegraphics[width=0.55\linewidth]{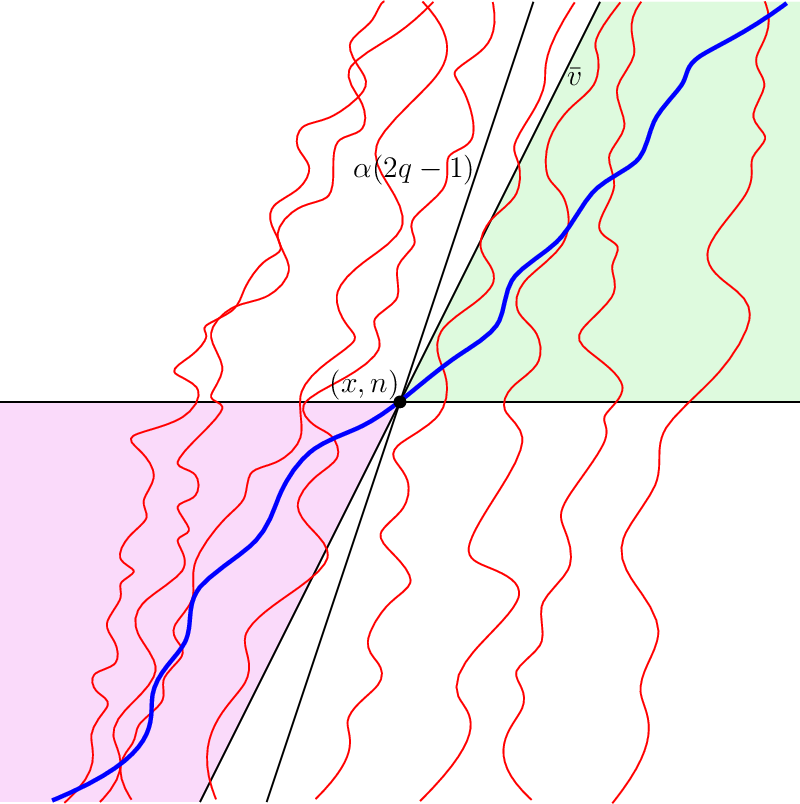}
    \caption{An illustration of trajectories and the cones $\angle(x,n)$ (top right, shaded in green) and $\turnangle(x,n)$ (bottom left, shaded in purple).
    The red trajectories are some typical examples of particles in the environment, and the blue, thicker trajectory is a possible trajectory for the walker. Note that since the angle of the cones is associated with $\widebar{v}$ and the environment particles have a speed of $\frac{2q-1}{2} < \widebar{v}$, it is unlikely for the environment particles to jump between the two cones, unlike the walker. (The dimensions are not to scale.) }
    \label{fig: Cones}
\end{figure}

We now define record times $R_k$ to be
\begin{equation} \label{eq: def record time}
    R_k = \inf \left\{ n \in \dN : (X_n, n) \in \angle((1-\widebar{v})k, 0) \right\},
\end{equation}
which represents the random walk reaching a new record distance past a line of slope $\widebar{v}$.

We will focus on those record times satisfying a set of additional conditions helping the walk leaving behind its current environment, called \textit{good} record times.  
We will then prove that good record times happen frequently, and that after each of these times, a subsequent record time shortly after has a suitably high probability of being a regeneration time. 

For a good record time to happen, we first require the random walk to leave its old downwards-facing cone $\tilde{\mathcal{C}}$ by taking a certain number $T''$ of consecutive steps to the right, upon which time it will enter a new upwards-facing cone $\mathcal{C}$, as can be seen in Figure \ref{fig: Split Cones}. 
The next condition is that particles around the random walk before it took these consecutive steps to the right do not enter $\mathcal{C}$. This will be encapsulated in the local influence field, which we require to be small enough. 
As a separate condition, we will also require particles that were between the old cone $\tilde{\mathcal{C}}$ and the new cone $\mathcal{C}$ while the random walk was making this jump between the two cones also do not enter $\mathcal{C}$. 
Finally, we will require the random walk stays in $\mathcal{C}$ for a long time.

\begin{figure}
    \centering
    \includegraphics[width = 0.69\linewidth]{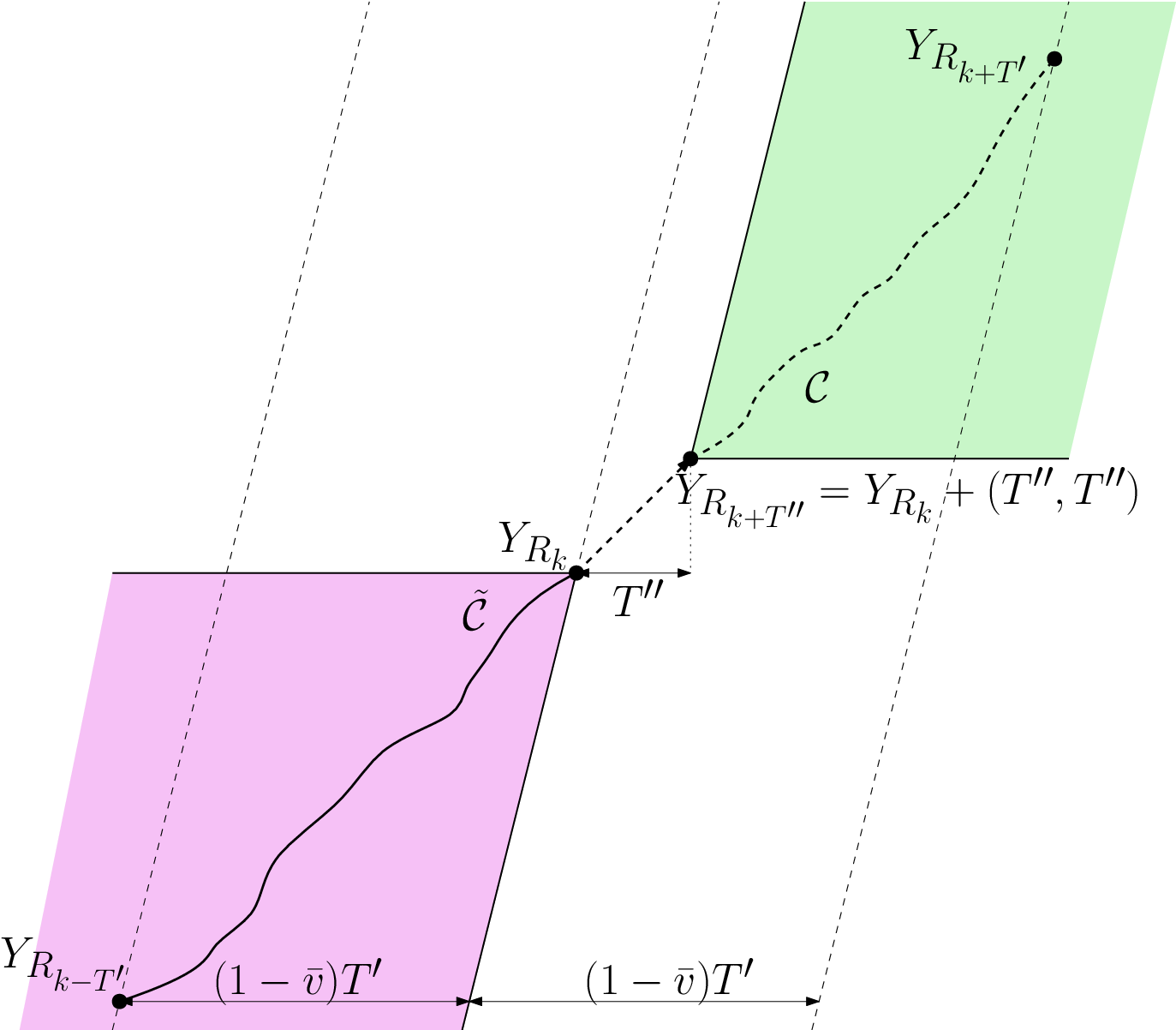}
    \caption{A depiction of a random walk such that $R_k$ is a good record time.}
    \label{fig: Split Cones}
\end{figure}



A word about the influence field $h(x,n)$: it is defined as the minimum distance apart two cones (one downwards-facing, one upwards-facing) must be for no particles in the environment to make a jump between them, and the local influence field $h^T(x,n)$, which adds in the extra condition that the trajectories of these particles does not intersect the cone $\turnangle(x-\lfloor (1-\widebar{v})T'\rfloor, n)$ (see \cite[(4.21),(4.30)]{HHSST15} for details). 
As said previously, at a good record time, we are looking for this local influence field to be sufficiently small. 
\\


\textbf{Proof structure from \cite{HHSST15}.}
In \cite{HHSST15}, Theorem 4.1 ensures that regeneration times constructed above are indeed renewal times, cutting the trajectory of the walk into i.i.d. bits (and its standard proof will work in the same way in our case). 
\\
Then, Theorem 4.2 shows that the first regeneration time has all its moments finite, which is enough for our purpose. To this end,  
Lemmas 4.3 and 4.4 give exponential tails to the influence field and local influence field, respectively. 
Lemma 4.5 shows that 
once the random walk enters a cone, it has a probability bounded away from 0 to enter a certain subsequent cone in a relatively short amount of time. Geometrically, this is interpreted as entering a parallelogram from the bottom left corner and  exiting it from the right side. 
Proposition 4.6 combines these and bounds the probability that none of the $R_k$'s, $k \in [1, T]$, is a good record time. 
Finally, in the proof of Theorem 4.2, from a good record time $R_k$, we obtain a regeneration time at time $R_k+T''$ (where $T'' = \lfloor \delta \log T \rfloor$, with $\delta = \frac{1}{4\log(\frac{1}{\pempty \wedge \pfull})}$, is actually the distance between the two cones in the definition of a good record time), by adding in the high probability conditions that everywhere in some large surrounding box, the influence field is not too large and the deviations of a random walk (starting from any given point in the aforementioned box) are not too large in the wrong direction. We can bound the probabilities of these two events by the exponential tails on the size of the influence field and Corollary \ref{cor: ballisticity} for the walk's deviations, then taking crude union bounds.
\\


\textbf{Differences with \cite{HHSST15}.}
Lemma 4.3 from \cite{HHSST15} -- which gives an upper bound for the probability of the influence field $h(x)$ being larger than a certain size $l$ -- works the same way, slightly changing equation (4.25) (and therefore also (4.26) and (4.27)) to account for the drift of the environment particles, and having $Z_n(\omega)$ be distributed like a two-sided asymmetric random walk starting at $x$. 
In particular, the bound found via Azuma's inequality in (4.25) becomes
\begin{equation}
    \mu (W_{y \leftrightarrow y'} ) = P_0 (Z_{n' - n} = x' - x) 
    \leq \exp \left( - \frac{(x' - x - \nonlazy(2q-1)(n'-n)}{2 (n' - n)} \right). 
\end{equation}
Thus (4.26) becomes 
\begin{equation}
\begin{split}
        &\dP (h(0) > l) \leq 
        \\
        &\rho \sum_{(x, n) \in \turnangle(0,0)} \sum_{(x', n') \in \angle(l,l)} 
    \exp \left( - \left(\frac{\widebar{v} - \nonlazy(2q-1)}{2} \right) \left( x' - x - \nonlazy(2q-1)(n'-n) \right) \right).
\end{split}
\end{equation}
Then (4.27) is changed accordingly. Since $\widebar{v} > \nonlazy(2q-1)$, this is not an issue, and the lemma itself is unchanged. 

Lemma 4.4 from \cite{HHSST15} -- which gives a similar bound to the one found in Lemma 4.3 for the local influence field $h^T(x)$ -- follows immediately from Lemma 4.3. This remains unchanged. 

Lemma 4.5 from \cite{HHSST15} itself remains similar, but the definition of the parallelogram we use changes as follows. 

\begin{figure}
    \centering
    \includegraphics[width=0.5\linewidth]{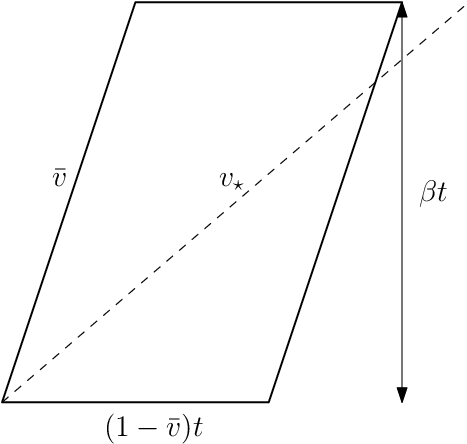}
    \caption{The parallelogram $\mathcal{P}_t(y)$, which has slope $\widebar{v}$, length $(1-\widebar{v})t$ and height $\beta t$. Note that $\beta$ is such that a line with slope $v_\star$ will exit from the right side of the parallelogram.}
    \label{fig: parallelogram}
\end{figure}

Let $\mathcal{P}_t(y)$ denote the space-time parallelogram with slope $\widebar{v}$, (space) length $(1-\widebar{v})t$, and (time) height $\beta t$, where we take $\beta=\beta(\widebar{v},v_\star,\rho)$ large enough so that a line with slope (speed) $v_\star$ starting from the bottom left corner will exit from the right side of the parallelogram. 
$\mathcal{P}_t(y)$ is depicted in Figure~\ref{fig: parallelogram}.
These selections are made so that the random walk starting in the bottom left has a high probability to exit from the right, and when it does, it is separated from the environment particles that were initially on its left by the full width of the parallelogram (since the environment particles have a speed slower than the slope $\widebar{v}$ of the parallelogram).
In \cite{HHSST15}, $\beta$ is easier to define explicitly, with $\widebar{v}$ being a know function of $v_\star$, whereas in our case, 
leaving $\beta$ implicit is simpler as we also require $\widebar{v} > (2q-1)\alpha$. The height of the parallelogram is such that we have sufficient separation between lines of slope $\widebar{v}$ and $v_\star$, and $\beta$ is changed accordingly.
The proof of Lemma 4.5 works the same way, using Corollary \ref{cor: ballisticity} for the ballisticity condition given by (1.7) in \cite{HHSST15}. 

Lemma 4.6 in \cite{HHSST15} states that the probability that for $k \leq T$, the probability that no record time $R_k$ is a good record time is exponentially small in $T$, i.e. that
\begin{equation}
    \dP \left( \forall k \in [1, T], R_k \text{ is not a good record time} \right)\leq e^{-c T^{1/2}}.
\end{equation}
This is proved by examining the probability of a single record time being a good record time. 

This change impacts the proofs of equations (4.47) and (4.50) in the proof of Lemma 4.6 from \cite{HHSST15}.
For the proof of (4.47) at (4.51), the first equality is an unchanged union bound. Then, for the next inequality, the purpose of the parallelogram -- to separate environment particles from the random walker -- is unchanged, and one can still perform a union bound (whence the sum) over the possible exit points on the right boundary of the parallelogram. Then, in the final line of (4.51), $c_5$ in~\cite{HHSST15} (which should actually read $c_5/v'$ there)  becomes $\beta c_5$, as we are summing over the right boundary of the parallelogram. 

The changes to the parallelogram do not change how it is used in the proof of (4.50), at (4.53). Indeed, if the random walk $Y^{Y_{R_{k+T''}}}$ exits the parallelogram $\mathcal{P}_{T'}(Y_{R_{k + T''}})$ through the right side, then it means that the random walk remains in the cone $\angle(Y_{R_{k+T''}})$ until time $R_{k + T'}$ (since that time is when it exits the parallelogram - with $T' = \left\lfloor T^\epsilon \right\rfloor$ for $\epsilon = \frac{1}{4} \left(\Cl[c]{epsilon scale for local influence field} \delta \wedge 1 \right)$).  

The other conditions of being a good record time that are checked in Lemma 4.6 in \cite{HHSST15}, and the completion of the proof of Theorem 4.2, are unaffected. 

Then, completing the proof of Theorem 4.2 from \cite{HHSST15} is unchanged, giving that  $\dE \left[ e^{c \log^\gamma \tau} \right] < \infty$, where $\tau$ is the first regeneration time. 

From there, we can use Theorem 4.1 from \cite{HHSST15} to conclude that Theorem 4.2 is true for other (non-first) regeneration times.
\\
\\
To conclude, we have just established the existence of integrable renewal times for $X$ under $\mathbb{P}^\rho$, which implies the existence of $v(\rho)>(2q-1)\alpha$ (if $\rho >\rho_0$, and  $v(\rho)<(2q-1)\alpha$ if $\rho<\rho_0$) such that
\begin{equation}
    \frac{X_n}{n} \underset{n\rightarrow\infty}{\longrightarrow} v(\rho)
\end{equation}
$\mathbb{P}^\rho$-a.s. Note that by Corollary \ref{cor:APCRWspeed}, we retain the monotonicity of $v(\cdot)$ in $\rho$, thus completing the proof of Theorem \ref{thm:main}.

\subsection{Proof of Corollary~\ref{cor:APCRWsum}}\label{subsec:apcrwsum}
Fix $N\in \mathbb{N}$ and consider a mixture of $N$ APCRWs as in the statement of the corollary. From Propositions~\ref{prop: apcrw superposition} and~\ref{prop:conditionsforAPCRW}, this new environment satisfies our conditions with the slight modifications mentioned in Section~\ref{subsec:superpos}. This triggers two kinds of changes that we need to track down in our arguments: some constants change in the probability bounds of the non-desirable events in these conditions (one has to take the worst value of $C_1, C_2, \ldots$ over the $N$ different environments, and an extra $N$ factor arises from a union bound over all environments). More subtly, as we now require particles of \textit{each} type to have the right empirical density in~\ref{pe:densitychange}-\ref{pe:nacelle}, the events that the bespoke condition cannot be applied in a given space-time frame could a priori have a larger probability (with the same kind of changes in the bounds). All these updates turn out to be harmless. Let us illustrate this here for Section~\ref{sec:couplings} - the same applies for Section~\ref{sec:speed} (and~\cite{CKKR24} for the monotonicity of $v$), we leave out the details. 
\\
In Section~\ref{sec:couplings}, $G_1$ at~\eqref{eq: many to one G1 def} must now be the intersection of the event described applied to each of the $N$ types of particles. Hence at~\eqref{eq: many to one G1 prob}, the RHS gets a prefactor $N$, and $\Cr{densitydev}$ has to be small enough so that~\ref{pe:density} holds for each of the $N$ types of particles (which leaves $\Cr{densitydev}>0$ as we take the minimum of finitely many positive quantities). Similarly for the recoupling event $G_2$ at~\eqref{eq: many to one G2 def}, and~\eqref{eq: many to one G2 prob} sees its RHS gaining a factor $N$ and $\Cr{SEPcoupling2}$ being potentially altered. Matters are even simpler for $G_3$ at~\eqref{eq: many to one G3 def} and~\eqref{eq: many to one G3 prob}, as the probability of~\ref{pe:drift} failing is precisely zero for the APCRW. All in all, this does not change the final bound at~\eqref{eq: many to one union bound proba}. 
\\
Lastly, when all the drifts $d_i$ are equal, one can readily check that the renewal structure of the previous section can be implemented in the same way, with $(2q-1)\alpha$ replaced by $d_1$.

\appendix
\section{Proof of Proposition~\ref{prop:conditionsforAPCRW}: coupling conditions for the APCRW}\label{sec:couplingproofs}

In this appendix, we first present some technical estimates related to the APCRW environment, and then use those estimates to help prove \ref{pe:densitychange}~-~\ref{pe:nacelle}.

\subsection{Technical estimates: heat kernel and soft local times}
In this section, we write $\widebar{q_{n}}$ for the discrete time heat kernel of a non-lazy asymmetric random walk taking a step up with probability $q$, and a step down with probability $1-q$. That is, $\widebar{q_{n}}(z)$ is the probability that this walk, starting at $0$, reaches $z \in \integers$ after $n\geq 0$ steps. 
\\
We first give an estimate for the heat kernel itself, in the spirit of~\cite[Proposition 2.5.3]{LL10} (we could not locate this adaptation, which does not bear any notable difficulty, in the existing literature). This would suffice to control the single site marginals $\eta_t(z)$ of an environment $\eta$ after some time $t$, given a possibly deterministic but regular enough initial condition $\eta_0$. However, the crux of most of our conditions relies on controlling the \textit{joint} distribution of $\eta_t(z)$ for $z$ ranging over an appropriate interval. This is much more delicate, and we use the framework of the so called \textit{soft local times} from~\cite{HHSST15,PopovTeixeira}, which relies on a clever Poissonization of our system.
\begin{lemma} \label{Lemma: Heat Kernel}
	Let $(a_n)_{n\geq 0}$ be an arbitrary positive sequence converging to 0. For $n\geq 0$ and $z\in \integers$, let $w = z - (2q-1)n$. As $n\rightarrow \infty$, we have that uniformly in $z\in [(2q-1)n-na_n,(2q-1)n+na_n]$:
	\begin{equation}
		\widebar{q_{2n}}(2z) = 
		\sqrt{\frac{n}{\pi \left(n^2 - z^2 \right)}} 
			\exp \left( - \frac{w^2}{n} \left( \frac{1}{4q(1-q)} \right) + O\left( \frac{w^3}{n^2} + \frac{w}{n} \right) \right)
	\end{equation}
    and the same holds for $\widebar{q_{2n+1}}(2z+1)$.
\end{lemma}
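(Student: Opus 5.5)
The plan is to work from the exact binomial formula and expand it with Stirling's formula, following the proof of \cite[Proposition 2.5.3]{LL10} in the symmetric case. To reach $2z$ in $2n$ steps the walk must make exactly $n+z$ up-steps and $n-z$ down-steps, so
\[
\widebar{q_{2n}}(2z)=\binom{2n}{n+z}q^{n+z}(1-q)^{n-z}.
\]
Write $z=(2q-1)n+w$ with $|w|\le na_n$. Then
\[
n+z=2qn+w,\qquad n-z=2(1-q)n-w .
\]
Both are of order $n$ uniformly in the range considered, since $a_n\to0$ and $q\in(0,1)$. Hence Stirling's formula $m!=\sqrt{2\pi m}\,(m/e)^m(1+O(1/m))$ can be applied to all three factorials with a uniform $O(1/n)$ relative error.

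\textbf{Prefactor.} The square-root parts of Stirling's formula combine to
\[
\frac{\sqrt{2\pi\cdot 2n}}{2\pi\sqrt{(n+z)(n-z)}}=\sqrt{\frac{n}{\pi(n^2-z^2)}},
\]
which is exactly the prefactor in the statement.

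\textbf{Exponential part.} The exponential parts combine to $\exp(\Phi)$, where
\[
\Phi=-(2qn+w)\log\Big(1+\frac{w}{2qn}\Big)-(2(1-q)n-w)\log\Big(1-\frac{w}{2(1-q)n}\Big).
\]
Since $|w|/n\le a_n\to0$, both logarithms can be Taylor expanded to second order with a uniform cubic remainder. The first bracket gives $-w-\frac{w^2}{4qn}+O(w^3/n^2)$, and the second gives $+w-\frac{w^2}{4(1-q)n}+O(w^3/n^2)$. The linear terms cancel; this cancellation is exactly why one centres at $(2q-1)n$. The quadratic terms sum to
\[
-\frac{w^2}{n}\Big(\frac1{4q}+\frac1{4(1-q)}\Big)=-\frac{w^2}{n}\cdot\frac{1}{4q(1-q)}.
\]
The Stirling correction $1+O(1/n)$ is written as $\exp(O(1/n))$ and put into the error term. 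Strictly, the error term should read $O(w^3/n^2+w/n+1/n)$; the $1/n$ piece only matters when $w$ is bounded, and the statement's $O(w/n)$ should be read with this convention.

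\textbf{Odd times.} For $\widebar{q_{2n+1}}(2z+1)=\binom{2n+1}{n+z+1}q^{n+z+1}(1-q)^{n-z}$ I would run the same computation with $n+z+1$ and $n-z$ in place of $n\pm z$. The shift by one changes the prefactor and the centring only by relative amounts $O(1/n)$ and additive terms $O(w/n)$, which are absorbed into the error. Alternatively, condition on the first step, writing $\widebar{q_{2n+1}}(2z+1)=q\,\widebar{q_{2n}}(2z)+(1-q)\,\widebar{q_{2n}}(2z+2)$, and use the even case; the two terms differ in $w$ by $O(1)$.

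\textbf{Main obstacle.} The only delicate point is keeping every error term uniform over the whole window $|w|\le na_n$, rather than only for $|w|=O(\sqrt n)$. The cubic Taylor remainder is $O(|w|^3/n^2)$ with a constant depending only on $q$, precisely because $|w|/n\le a_n$ is small, so no further restriction is needed. I would check this bound carefully for both logarithmic terms.
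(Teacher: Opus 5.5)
Your proposal is correct and follows essentially the same route as the paper: the exact binomial formula, Stirling's formula with a uniform $O(1/n)$ error (since $n\pm z\asymp n$ on the window), and a second-order expansion of the logarithms. The difference is bookkeeping: you parametrize $n+z=2qn+w$, $n-z=2(1-q)n-w$ from the start, which makes the cancellation of the linear terms and the coefficient $\frac{1}{4q(1-q)}$ immediate, whereas the paper expands $\bigl(1+\frac{w(z+(2q-1)n)}{n^2-z^2}\bigr)^n\bigl(1-\frac{w}{(1-q)(n+z)}\bigr)^z$ in $z$ and then collects six separate quadratic coefficients in $w$. Your caveat about the missing $O(1/n)$ is also accurate: the paper's own penultimate display carries an $O(1/n)$ from Stirling that is dropped in the final line (for instance, at $q=1/2$, $z=0$ the statement as written would force $\widebar{q_{2n}}(0)=(\pi n)^{-1/2}$ exactly), and this is harmless for the later uses in the soft local time estimates.
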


\begin{proof}
We only give the proof for $\widebar{q_{2n}}(2z)$, as that for $\widebar{q_{2n+1}}(2z+1)$ can be deduced mutatis mutandis. 
By the usual binomial and Stirling's formulae, we have
\begin{equation} 
\begin{split}
	\widebar{q_{2n}}(2z) &= {2n \choose n+z} q^{n+z} \left(1-q \right)^{n-z} \\
	&= \frac{\left(2n\right)!}{(n+z)! (n-z)!} q^{n+z} \left(1-q \right)^{n-z} \\
	&= q^{n+z} (1-q)^{n-z} \frac{\sqrt{2\pi} (2n)^{2n + \frac{1}{2}} e^{-2n} [1 + O(\frac{1}{n}) ]}{2\pi (n+z)^{n+z+\frac{1}{2}} (n-z)^{n-z+ \frac{1}{2}} e^{-2n} [1 + O(\frac{1}{n})]} \\
	&= \sqrt{ \frac{n}{\pi \left(n^2 - z^2 \right)}} \left(\frac{4n^2 q (1-q)}{n^2 - z^2} \right)^n \left( \frac{q(n-z)}{(1-q)(n+z)} \right)^z   \left(1 + O(n^{-1})\right) \\
	&= \sqrt{ \frac{n}{\pi \left(n^2 - z^2 \right)}} \left(1 + \frac{w(z + (2q-1)n)}{n^2 - z^2} \right)^n \left( 1 - \frac{w}{(1-q)(n+z)} \right)^z   \left(1 + O(n^{-1})\right).
\end{split}
\end{equation}
Note that the $O(n^{-1})$ from Stirling's formula is uniform in $z$ since by our constraints on $z$, $n-z,n+z\geq (1-q)n$ for $n$ large enough. Then, we can perform the following development, using that $w/n=o(1)$ uniformly in $z$:

\begin{equation} 
\begin{split}
	\widebar{q_{2n}}(2z)
	&= \sqrt{ \frac{n}{\pi \left(n^2 - z^2 \right)}} 
		\exp \left(\frac{nw(z + (2q-1)n)}{n^2 - z^2} \right)
		\exp \left( -\frac{wz}{(1-q)(n+z)} \right) \\
		&\qquad \qquad
		\exp \left(-\frac{nw^2(z + (2q-1)n)^2}{2\left(n^2 - z^2\right)^2} + O\left( \frac{n w^3 \left(z + (2q-1)n \right)^3}{\left( n^2 - z^2 \right)^3 } \right) \right) \\
		& \qquad \qquad
		\exp \left(-\frac{w^2z}{2(1-q)^2(n+z)^2} + O\left( \frac{w^3  z}{\left( 1-q \right)^3 \left( n+z \right)^3} \right) \right) \left(1 + O\left(n^{-1} \right)\right) .
\end{split}
\end{equation}

We will now write this in $n$ and $w$, not $z$, in particular looking for terms of order at least $w^2/n$
. The first exponential term is

\begin{equation} 
\begin{split}
	\frac{nw \left( z + (2q-1)n \right)}{n^2 - z^2} 
	&= \frac{nw \left( w + 2(2q-1)n \right)}{n^2 - \left(w + (2q-1)n \right)^2} \\
	&= \frac{nw^2 + 2(2q-1)n^2w}{n^2\left(1-(2q-1)^2\right) -w^2 - 2(2q-1)nw} \\
	&= \frac{\frac{w^2}{n} + 2(2q-1)w}{\left( 4q -4q^2 \right) \left(1 - \frac{4q-2}{4q - 4q^2} \frac{w}{n} - \frac{1}{4q-4q^2}\frac{w^2}{n^2} \right)} \\
	&= \frac{\frac{w^2}{n} + 2(2q-1)w}{4q \left( 1 -q \right)} \left(1 + \frac{4q-2}{4q - 4q^2} \frac{w}{n} + o\left( \frac{w}{n} \right) \right) \\
	&= w \left( \frac{2q-1}{2q(1-q)} \right)   
		+   \frac{w^2}{n} \left( \frac{1}{4q(1-q)}  +  \frac{(2q-1)^2}{4q^2(1-q)^2}  \right) + O \left(\frac{w^3}{n^2} \right).
\end{split}
\end{equation}

Similarly, for the other exponential terms, we find that
\begin{equation}
	\frac{-wz}{(1-q)(n+z)} = w \left(\frac{-(2q-1)}{2q(1-q)} \right) + \frac{w^2}{n} \left( \frac{-1}{2q(1-q)} + \frac{2q-1}{4q^2(1-q)} \right) +
		O\left(\frac{w^3}{n^2}\right),
\end{equation}
\begin{equation}
	-\frac{nw^2(z + (2q-1)n)^2}{2\left(n^2 - z^2\right)^2} + O\left( \frac{n w^3 \left(z + (2q-1)n \right)^3}{\left( n^2 - z^2 \right)^3 } \right)
	=- \frac{w^2}{n} \left( \frac{4 (2q-1)^2}{2\left(4q(1-q)\right)^2} \right) + O \left(\frac{w^3}{n^2} + \frac{w}{n} \right) ,
\end{equation}
and 
\begin{equation}
	-\frac{w^2z}{2(1-q)^2(n+z)^2} + O\left( \frac{w^3  z}{\left( 1-q \right)^3 \left( n+z \right)^3} \right)
	= -\frac{w^2}{n} \left( \frac{2q-1}{2(2q)^2(1-q)^2} \right) + O \left(\frac{w^3}{n^2} + \frac{w}{n} \right).
\end{equation}

So
\begin{equation} 
\begin{split}
	\widebar{q_{2n}}(2z) 
	&= \sqrt{ \frac{n}{\pi \left(n^2- z^2 \right)}} 
		\exp \biggl( w \left(\frac{2q-1}{2q(1-q)} - \frac{2q-1}{2q(1-q)} \right)   \\
		& \qquad \qquad 
		+ \frac{w^2}{n}  \biggl( \frac{1}{4q(1-q)}  +  \frac{(2q-1)^2}{4q^2(1-q)^2}  -  \frac{1}{2q(1-q)}  + \frac{2q-1}{4q^2(1-q)}    \\
		& \qquad \qquad \qquad
		- \frac{4 (2q-1)^2}{2\left(4q(1-q)\right)^2}  - \frac{2q-1}{2(2q)^2(1-q)^2}  \biggr) 
		+ O \left(\frac{w^3}{n^2} + \frac{w}{n} \right)+O\left(\frac{1}{n}\right) \biggl)   \\
	&= \sqrt{\frac{n}{\pi \left(n^2 - z^2 \right)}} 
		\exp \left( - \frac{w^2}{n} \left( \frac{1}{4q(1-q)} \right) + O\left( \frac{w^3}{n^2} + \frac{w}{n} \right) \right) .
\end{split}
\end{equation}

\end{proof}

We need a few definitions for the next two lemmas, following definitions in \cite{CKKR24}.
First, we define $\eta^{\rho \pm \varepsilon}$. For $z \in I_t := [t, H-t]$, let $\eta^{\rho \pm \varepsilon} = \Lambda \left( \{z\} \times (0, \rho \pm \varepsilon] \right)$, where $\Lambda$ is a Poisson point process on $\dZ \otimes \posreals$ with intensity $1 \otimes \lambda$, where $1$ is the counting measure, and $\lambda$ the Lebesgue measure. 
For $z \notin I_t$, let $\eta^{\rho \pm \varepsilon} (z) = \Poisson(\rho \pm \varepsilon)$ independently. 
For a given initial configuration $\eta_0 \in \Sigma$, let $(x_i)_{i \geq 1}$ be an ordering of the locations of particles in $\eta_0$ (NB: this will likely include repeated locations). 
For $i \geq 1$ and $z \in [-t, H+t]$, let $g_i(z) = {q_t}(x_i, z)$, where $q$ is the heat kernel of an environment particle, that is an asymmetric random walk with laziness parameter $\lazy$ and probability to take a step to the right $\nonlazy q$. 
Then recursively define $(\xi_i)_{i \in \dN}$ by setting $\xi_1 := \sup \{ t \geq 0 : \bigcup_{z \in \dZ} \Lambda ( \{z\} \times (0, t g_1(z) ] ) = 0 \} $, and letting 
\begin{equation}
    \xi_i := \sup \{ t \geq 0 : \bigcup_{z \in \dZ} \Lambda ( \{z\} \times (0, \sum_{j = 1}^{i-1} \xi_j g_j(z) + t g_{i}(z) ] ) = i-1 \}
\end{equation}
for $i \geq 2$. 

We will then define the soft local time $G_{\eta_0}(z)$ for $z \in [-t, H+t]$ as
\begin{equation} \label{eq: soft local time def}
    G_{\eta_0}(z) = \sum_{i \geq 1} \xi_i {q_t}(x_i, z).
\end{equation}

\begin{lemma} \label{Lemma: Soft Local Times}
	There exist constants $0 < \Cl{lower bound for t in sltc}, \Cl[c]{upper bound on t with epsilon in sltc}, \Cl[c]{prob bound for G in sltc}, \Cl{slt exp tightness} < \infty$ such that the following holds. Let $\rho \in (0,K)$ and $\varepsilon \in (0, (K-\rho \wedge \rho \wedge 1)$, and let $H, \ell, t \in \dN$ be such that	$\Cr{lower bound for t in sltc}\ell ^2 < t < \frac{H}{2}$ and $(\rho + \varepsilon) \ell \sqrt{\frac{\log t}{t}} < \Cr{upper bound on t with epsilon in sltc} \varepsilon$. Then for all $z\in I_t$, 
		\begin{equation} \label{eq: soft local time bounds}
		\begin{split}
			\left( \rho - \frac{\varepsilon}{2} \right) \left(1 + \Cr{slt exp tightness}\ell \sqrt{\frac{\log t}{t}} \right) \leq \Ecoupling \left[ G_{\eta_0} (z) \right]; \\
			\Ecoupling \left[ G_{\eta_0} (z) \right] \leq \left( \rho + \frac{\varepsilon}{2} \right) \left(1 + \Cr{slt exp tightness}\ell \sqrt{\frac{\log t}{t}} \right).
		\end{split}
		\end{equation}
\end{lemma}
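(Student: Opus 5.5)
The plan is to compute $\Ecoupling[G_{\eta_0}(z)]$ exactly by the soft local times construction and then reduce to a heat-kernel estimate. By the standard property of soft local times (as in \cite{PopovTeixeira,HHSST15}), the $\xi_i$ are i.i.d.\ $\mathrm{Exp}(1)$ random variables, each with mean $1$. Hence, by linearity,
\[
\Ecoupling[G_{\eta_0}(z)] = \sum_{i\geq 1} q_t(x_i,z) = \sum_{x\in\dZ} \eta_0(x)\, q_t(x,z),
\]
which is the expected number of particles at $z$ at time $t$. The statement is therefore a deterministic smoothing estimate. Implicitly, $\eta_0$ must satisfy the empirical density hypothesis that $\eta_0(I)\in[(\rho-\varepsilon/2)\ell,(\rho+\varepsilon/2)\ell]$ on every interval $I$ of length $\ell$ in the relevant window. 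This is the same assumption as in the analogous lemma of \cite{CKKR24}, and I will state it explicitly.

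\textbf{Step 1 (localisation).} Centre at $m = z-\alpha(2q-1)t$, the backward mean position, and set $R = C\sqrt{t\log t}$. By Azuma or Hoeffding, $\sum_{|x-m|>R} q_t(x,z)$ is at most $t^{-C'}$ per site. Since $\eta_0$ has bounded density on intervals of length $\ell$ over the window (and $t<H/2$ ensures the window contains $[m-R,m+R]$), the contribution of far sites is at most $O((\rho+\varepsilon)\,t^{-C''})$. This is negligible compared to $(\rho\pm\varepsilon/2)\,\ell\sqrt{\log t/t}$.

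\textbf{Step 2 (block decomposition).} Partition $[m-R,m+R]$ into consecutive blocks $I_j$ of length $\ell$. On each block, I compare $q_t(x,z)$ with its value at the block's left endpoint $y_j$. The key input is a discrete gradient bound
\[
|q_t(x,z)-q_t(y_j,z)| \leq C\,\ell\,\sqrt{\tfrac{\log t}{t}}\; q_t(y_j,z) + O(t^{-10})
\]
uniformly for $|x-m|\leq R$ and $|x-y_j|\le \ell$. I will derive it from Lemma~\ref{Lemma: Heat Kernel} after de-lazifying: write the lazy walk as a binomial number of non-lazy steps, each with drift $2q-1$ conditionally, and apply the lemma with $a_n\asymp\sqrt{\log n/n}$. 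On that range the exponent has derivative in $w$ of order $|w|/n\lesssim\sqrt{\log t/t}$, and the error terms $O(w^3/n^2+w/n)$ are $o(1)$ and vary by $O(\ell\sqrt{\log t/t})$ over a block. The hypothesis $\Cr{lower bound for t in sltc}\ell^2<t$ keeps $\ell\sqrt{\log t/t}$ small, which justifies the linearisation.

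\textbf{Step 3 (summing).} Using Step 2,
\[
\sum_{x\in I_j}\eta_0(x)q_t(x,z) = \eta_0(I_j)\,q_t(y_j,z)\big(1\pm C\ell\sqrt{\log t/t}\big).
\]
Apply the density bounds to $\eta_0(I_j)$. Then use Step 2 once more to get $\ell\, q_t(y_j,z)=\sum_{x\in I_j}q_t(x,z)\,(1\pm C\ell\sqrt{\log t/t})$, and note that $\sum_x q_t(x,z)=1$ up to the localisation error. The last identity holds because the transition kernel is doubly stochastic on $\dZ$, by translation invariance. This gives the two inequalities, with $\Cr{slt exp tightness}$ absorbing the constants. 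The condition $(\rho+\varepsilon)\ell\sqrt{\log t/t}<\Cr{upper bound on t with epsilon in sltc}\varepsilon$ ensures that the additive tail errors are absorbed without crossing $\rho\pm\varepsilon/2$.

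\textbf{Main obstacle.} I expect the hardest part to be the uniform ratio/gradient bound of Step 2 for the \emph{lazy} drifted kernel. Lemma~\ref{Lemma: Heat Kernel} is stated for the non-lazy walk only, so I would first condition on the number $N\sim\mathrm{Bin}(t,\alpha)$ of active steps. I would then restrict to $|N-\alpha t|\le C\sqrt{t\log t}$ and apply the lemma for each such $N$, also handling parity, which is resolved because averaging over $N$ mixes both parities. The ratio bound then follows from a convex combination of ratio bounds.
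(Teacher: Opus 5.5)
Your proposal is correct and is essentially the paper's argument. You use $\mathbb{E}\,\xi_i=1$ to reduce the claim to bounding $\sum_x \eta_0(x)q_t(x,z)$, localise to a window of width $O(\sqrt{t\log t})$ (your centring at $z-\alpha(2q-1)t$ is the right one), cut it into blocks of length $\ell$, and control each block with the ratio bound $q_t(x,z)\le q_t(y,z)\bigl(1+C\ell\sqrt{\log t/t}\bigr)+O(t^{-8})$, obtained by conditioning on $N_t\sim\mathrm{Bin}(t,\alpha)$ and applying Lemma~\ref{Lemma: Heat Kernel} parity by parity, exactly as the paper does.

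Two small caveats apply to the paper as much as to you. First, what this argument yields for the lower bound is $(\rho-\varepsilon/2)\bigl(1-C\ell\sqrt{\log t/t}\bigr)$; the sign $+$ there in the statement is a typo, and it fails e.g.\ for $\eta_0\equiv 1$ with $\rho-\varepsilon/2=1$, where $\mathbb{E}_{\mathbb{Q}}[G_{\eta_0}(z)]=1$. Second, Lemma~\ref{Lemma: Heat Kernel} bounds only the size, not the variation, of its $O(w^3/n^2)$ error term, so the within-block ratio is more safely read off the exact ratio of consecutive binomial weights, $\frac{(n-z)q}{(n+z+1)(1-q)}=1+O\bigl((|w|+1)/n\bigr)$, iterated at most $\ell$ times.
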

The proof for this will roughly follow the proof of \cite[Lemma B.4]{CKKR24}, with a few technical differences due to the use of the heat kernel of an asymmetric random walk instead of a symmetric one.
\begin{proof}
Fix parameters satisfying the above and let $z \in [ - H + t, H - t ]$. Write $c_t = \sqrt{t \log t}$, and let $d_t = \alpha t (2q-1)$ be the expected position at time $t$ of an environment particle starting from $z$ (or in other words, the drift). 
\\	
We now cover $[\lfloor z + d_t - c_t\rfloor, \lfloor z + d_t + c_t\rfloor]$ by a family $\left( I_i \right)_{1 \leq i \leq m}$ of intervals containing $\ell$ integers each, all being pairwise disjoint with the potential exception of $I_1$ and $I_2$ which may overlap ($m$ being the bespoke cardinality). 
	
	We will only consider the upper bound, as the lower bound is shown in a similar but slightly simpler way (only the bulk term of the sum below being considered). We write
	\begin{equation}\label{eq: EQG decompo}
	\begin{split}
		\Ecoupling \left[ G_\eta (z) \right] &= \Ecoupling \left[ \sum_{i \geq 1}  \xi_i q_t(x_i, z)  \right]
			= \sum_{i \geq 1} \Ecoupling \left[ \xi_i q_t(x_i, z) \right]  
			= \sum_{i \geq 1} q_t (x_i, z) \\
			&\leq \sum_{i = 1}^{m} 
				\left( \rho + \frac{\varepsilon}{2} \right) |I_i| \max_{x_j \in I_i} \left( q_t (x_j, z) \right) 
				\\
                &\qquad+\left(\rho+\frac{\varepsilon}{2}\right)\ell \left(\sum_{x > z + \lfloor c_t+d_t \rfloor} q_t (x, z)
				+ \sum_{x < z - \lfloor c_t-d_t \rfloor} q_t (x, z)\right)
	\end{split}
	\end{equation}
	
	We first look at the last two terms. Applying Azuma's inequality to the trajectory of an environment particle and controlling its probability to end outside of $(d_t-c_t+1,d_t+c_t-1)$ in time $t$, we obtain as in~\cite[(B.11)]{CKKR24}:
    
	\begin{equation}\label{eq:heat kernel dev2}
	\begin{split}
		\sum_{\substack{x \geq z + \lfloor c_t \rfloor \\ \text{or }\\ x \leq z - \lfloor c_t \rfloor}} q_t (x, z) 
			&\leq 2 \exp \left( - \frac{(c_t - 1)^2}{2t} \right) 
			\leq 2 \exp \left( - \frac{\log t}{2} + \sqrt{ \frac{\log t}{t}} \right),
	\end{split}
	\end{equation}
	and this entails the desired control. We can then handle the first term
	\begin{equation} \label{eq:expectation estimate lemma first term split}
	\begin{split}
		\sum_{i = 1}^{m} 
					\left( \rho + \frac{\varepsilon}{2} \right) &|I_i| \max_{x_j \in I_i} \left( q_t (x_j, z) \right) \\
				& \leq \left( \rho + \frac{\varepsilon}{2} \right) \sum_{i = 1}^{m} 
					 \sum_{y \in I_i} q_t (y, z)
					 + \left( \rho + \frac{\varepsilon}{2} \right) \sum_{i = 1}^{m} 
					 \sum_{y \in I_i} \max_{x \in I_i} \left( q_t (x,z) - q_t (y, z) \right)
	\end{split}
	\end{equation}
    in the same was as in~\cite{CKKR24}, once we replace (B.13) and (B.15) respectively by
	\begin{equation} \label{eq: max of heat kernel}
		\max_{y,z \in \integers} q_t (y, z) \leq C t^{- \frac{1}{2}}
	\end{equation}
	for some constant $C$ and all $t\geq 1$, and
    \begin{equation}\label{eq:qtxzqtxy}
        q_t (x, z) 
			\leq q_t(y,z) \left(1 + C\frac{\ell c_t}{t} \right)
				+ 2 \exp \left( -8 \log t \right)
    \end{equation}
    for all $x,y$ in a common interval $I_i$. 
    \\
    
    \textbf{Proof of~\eqref{eq: max of heat kernel}}
    Such an estimate is standard for symmetric random walks (see e.g. \cite[Proposition 2.4.4]{LL10}). In our setting, we leverage Lemma~\ref{Lemma: Heat Kernel} and discriminate on the number $N_t$ of steps taken by the lazy biased random walk by time $t$. By translation invariance and~\eqref{eq:heat kernel dev2}, we only have to show
    \begin{equation}
       \max_{y \in [d_t-c_t, d_t+c_t]\cap \integers} q_t (0,y) \leq C t^{- \frac{1}{2}}. 
    \end{equation} 
    For any integer $y \in [d_t-c_t, d_t+c_t]$, we have 
    \begin{equation}
        q_t(0,y)\leq \sum_{k\geq 0} \widebar{q_{k}}(0,y)\dP(N_t=k) \leq \exp(-t^{1/6})+\sum_{k\in [\nonlazy t- t^{3/5}, \nonlazy t+t^{3/5}]} \widebar{q_{k}}(0,y)\dP(N_t=k)
    \end{equation}
   for $\Cr{lower bound for t in sltc}$ and thus $t$ large enough (only depending on $\alpha$) by another standard application of Azuma's inequality, using that $N_t\sim \text{Bin}(t, \nonlazy)$. Letting $w=y-(2q-1)k$, we have 
   $\vert w \vert \leq t^{3/5}+c_t \leq k^{2/3}$ for all $k$ considered, provided again that $\Cr{lower bound for t in sltc}$ is large enough, and thus Lemma~\ref{Lemma: Heat Kernel} yields that
\begin{equation}
\begin{split}
    q_t(0,y)&\leq  \exp(-t^{1/6})
    \\
    +&\sum_{k\in [\nonlazy t- t^{3/5}, \nonlazy t+t^{3/5}]} \sqrt{\frac{k}{\pi \left(k^2 - y^2 \right)}} 
			\exp \left( - \frac{w^2}{k} \left( \frac{1}{4q(1-q)} \right) + O\left(1\right) \right)  \dP(N_t=k)    
\end{split}
    \end{equation}
   where the $O(1)$ is uniform in the choice of $y$ and $k$. Since $k/(k^2-y^2)\geq c/k\geq c/(2\nonlazy t)$ uniformly in $k$ and $y$ (for $\Cr{lower bound for t in sltc}$ and $t$ large enough) for some constant $c$ uniquely depending on $q$ and $\alpha$,~\eqref{eq: max of heat kernel} follows. 
   \\

	
\textbf{Proof of~\eqref{eq:qtxzqtxy}.} The working is similar to the proof of~\cite[(B.15)]{CKKR24}, translating this in estimates with the discrete heat kernel, which brings the necessity to distinguish parity cases. 
Suppose first that $|x- z|$ and $|y - z|$ are both even. We have
	\begin{equation}\label{eq:decompo qtxz}
	\begin{split}
		q_t (x, z) \leq \sum_{n = \left\lfloor \frac{\nonlazy t}{2} - c_t \right\rfloor}^{\left\lceil \frac{\nonlazy t}{2} + c_t \right\rceil}\dP(N_t = 2n) \widebar{q_{2n}} (y, z) \frac{\widebar{q_{2n}} (x, z)}{\widebar{q_{2n}}(y,z)}  + \dP \left( \left| N_t - \nonlazy t \right| > 2 c_t \right)
	\end{split}
	\end{equation}
	
	Letting $w = x - z + 2n(2q-1)$ and $w' = y - z + 2n(2q-1)$, for $n \in \left[ \left\lfloor \nonlazy\frac{t}{2} - c_t \right\rfloor, \left\lceil \nonlazy\frac{t}{2} + c_t \right\rceil \right]$, we have $\vert w\vert, \vert w'\vert\leq 1 + 3c_t$. We arrive at this result because with these bounds on $n$, $|2n(2q-1) - d_t| \leq 2c_t$. Thus, we can again apply Lemma~\ref{Lemma: Heat Kernel} and obtain (uniformly in the choice of $x,y$ and $I_i$): 
	\begin{equation} \label{eq: slt disc heat kernel ratio}
		\frac{\widebar{q_{2n}} (x, z)}{\widebar{q_{2n}}(y,z)} 
			= \sqrt{ \frac{\frac{n}{\pi \left( n^2 - (x-z)^2 \right)}}{\frac{n}{\pi \left( n^2 - (y-z)^2 \right)}}}
				\exp \left( - \left(\frac{w^2}{n} - \frac{w'^2}{n} \right) \left( \frac{1}{4q(1-q)} \right) 
				+ O\left( \frac{w^3 - w'^3}{n^2} + \frac{w - w'}{n} \right) \right).  
	\end{equation}
Note that the terms in the exponential are $O\left( \ell \sqrt{(\log t)/t}\right)$ since (for $\Cr{lower bound for t in sltc}$ large enough)
\begin{center}
    $\vert (w^2-w'^2)/n\vert =\vert x-y\vert \times \vert w+w'\vert/n\leq  \frac{4 \ell (1 + c_t)}{\nonlazy t - 2 c_t}$ with $c_t=\sqrt{t \log t}$, 
\end{center}
$\vert w- w'\vert /n\leq \frac{2 \ell}{\nonlazy t - 2 c_t}$ and  
    \begin{center}
    $\vert (w^3-w'^3)/n^2\vert =\vert x-y\vert \times \vert w^2+ww'+w'^2\vert/n^2\leq l\times 3(1+ 3c_t)^2/n^2\leq \frac{6 \ell (1 + 3 c_t)^2}{\nonlazy^2 t^2 - 4 c_t^2}$.  
\end{center}
	
	Now, let us look more closely at the square root from \eqref{eq: slt disc heat kernel ratio}. Writing $\bar{x} = x - z\in [d_t-c_t-1, d_t+c_t+1]$, we note that $n,\bar{x},n-\bar{x}=\Theta(n)$ with constants that only depend on $\nonlazy$ and $q$. We then deduce
	\begin{equation}
	\begin{split}
		\sqrt{ \frac{ n^2 - (y-z)^2 }{ n^2 - (x-z)^2 } } &= \sqrt{ \frac{ n^2 - \left( \bar{x} - (x - y) \right)^2}{n^2 - \bar{x}^2}} \\
		&= \sqrt{ \frac{n^2 - \bar{x}^2 + 2 \bar{x} (x-y) - (x-y)^2}{n^2 - \bar{x}^2}} \\
		&= \sqrt{1 + \frac{2\bar{x} (x-y) - (x-y)^2}{n^2 - \bar{x}^2}} \\
		& = \sqrt{ 1 + O\left(\frac{\ell}{\sqrt{n}}\right) },
	\end{split}
	\end{equation}
	where we used again that $\vert x-y\vert\leq \ell$. Together with the control of the exponential terms, this yields a constant $C=C(\nonlazy,q)$ such that for $\Cr{lower bound for t in sltc}$ large enough, 
	\begin{equation}
		\frac{\widebar{q_{2n}} (x, z)}{\widebar{q_{2n}}(y,z)} 
			\leq  1  + C\frac{\ell c_t}{t}. 
	\end{equation}
	Combining this with~\eqref{eq:decompo qtxz} and using Azuma's inequality to handle the last term, we have that
	\begin{equation}
	\begin{split}
		q_t (x, z) 
			&\leq \sum_{n = \left\lfloor \frac{\nonlazy t}{2} - c_t \right\rfloor}^{\left\lceil \frac{\nonlazy t}{2} + c_t \right\rceil} \dP(N_t = 2n) 
				\widebar{q_{2n}} (y, z) \exp \left( C \frac{\ell c_t}{n} \right)
				+ \dP \left( \left| N_t - \nonlazy t \right| > 2 c_t \right) \\
			&\leq q_t(y,z) \left(1 + C\frac{\ell c_t}{t} \right)
				+ 2 \exp \left( -8 \log t \right).
	\end{split}
	\end{equation}
	If $|x - z|$ and $|y-z|$ are both odd, then a very similar computation works, swapping the number of steps to be $2n+1$. Finally, if one of $|x-z|$ and $|y - z|$ is even and the other is odd -- say $|x-z|$ is even, without loss of generality -- then for $n$ between $\left\lceil \frac{\nonlazy t}{2} - c_t \right\rceil$ and $\left\lfloor \frac{\nonlazy t}{2} + c_t \right\rfloor$,
	\begin{equation}
	\begin{split}
		\dP (N_t = 2n) &= \frac{t!}{(2n)! (t-2n)!} \nonlazy^{2n}(\lazy)^{t-2n}
			\\
            &= \frac{2n +1}{t-2n}\times\frac{\lazy}{\nonlazy} \dP(N_t = 2n+1) \\
			&\leq \frac{ \nonlazy t + 2 c_t+1}{t- \nonlazy t - 2c_t -1} \times \frac{\lazy}{\nonlazy} \dP(N_t = 2n+1) \\
			&\leq \left( 1 + C\sqrt{\frac{\log t}{t}} \right) \dP(N_t = 2n+1),
	\end{split}
	\end{equation}
	where the constant $C$ only depends on $\alpha$, and where we used $c_t/t=\sqrt{(\log t)/t}$.
	Hence, by a computation similar to before, 
	\begin{equation}
	\begin{split}
		q_t (x, z) 
			\leq &\sum_{n = \left\lceil \frac{\nonlazy t}{2} - c_t \right\rceil}^{\left\lfloor \frac{\nonlazy t}{2} + c_t \right\rfloor}  \dP(N_t = 2n) 
				\widebar{q_{2n}} (x, z) \exp \left( C \frac{l c_t}{n} \right)
				+ \dP \left( \left| N_t - \nonlazy t \right| > 2 c_t \right) \\
			\leq &\sum_{n = \left\lceil \frac{\nonlazy t}{2} - c_t \right\rceil}^{\left\lfloor \frac{\nonlazy t}{2} + c_t \right\rfloor}  \dP(N_t = 2n+1) \left( 1 + C\sqrt{\frac{\log t}{t}} \right) q_{2n+1}(y,z) \left(1 + C\frac{\ell c_t}{t} \right)
            \\
			&	+ 2 \exp \left( -8 \log t \right) \\
			\leq &\left( 1 + C\ell\sqrt{\frac{\log t}{t}} \right) q_t(y,z)
				+ 2 \exp \left( -8 \log t \right).
	\end{split}
	\end{equation} 

\end{proof}

The following is an adapted version of \cite[Proposition B.3]{CKKR24}. It `sandwiches' the evolution on an environment with a deterministic initial condition of empirical density $\rho$ between two randomized environments of law $\mu_{\rho\pm\varepsilon}$. This will be the main building block for coupling pairs environments with different initial conditions.

\begin{lemma} \label{Lemma: Soft Local Time Coupling} 
    There exist positive and finite constants $\Cr{lower bound for t in sltc}$, $\Cr{upper bound on t with epsilon in sltc}$, and $\Cr{prob bound for G in sltc}$ such that, for $\rho \in (0,K)$, $\varepsilon \in (0, (K-\rho) \wedge \rho \wedge 1$, and $H, \ell, t \in \dN$ such that $\Cr{lower bound for t in sltc} \ell^2 < t < H/2$ and $(\rho + \varepsilon) \left(\frac{\log t}{t}\right)^{1/2} \ell < \Cr{upper bound on t with epsilon in sltc}\varepsilon$,
    there exists a coupling $\dQ$ of $( \eta^{\rho - \varepsilon}, \eta_t, \eta^{\rho + \varepsilon})$, with $\eta^{\rho \pm \varepsilon} \sim \mu_{\rho \pm \varepsilon}$ and $\eta_t$ sampled under $\Penv^{\eta_0}$, such that if $\eta_0 \in \Sigma$ is such that for any interval $I \subseteq [0,H]$ with $|I| = l$,
    \begin{equation}
        (\rho - \varepsilon/2)l \leq \eta_0(I) \quad 
        (\textit{resp. }  \eta_0(I) \leq (\rho + \varepsilon/2)l
    \end{equation}
    then with $G = \{ \eta^{\rho - \varepsilon} \vert_{[t, H-t]} \preccurlyeq \eta_t \vert_{[t, H-t]}\}$ (resp. $\eta_t \vert_{[t, H-t]} \preccurlyeq \eta^{\rho - \varepsilon} \vert_{[t, H-t]} \}$),
    \begin{equation}
        \dQ(G) \geq 1 - H \exp\left( -\Cr{prob bound for G in sltc} (\rho + \varepsilon)^{-1} \varepsilon^2 \sqrt{t}\right).
    \end{equation}
\end{lemma}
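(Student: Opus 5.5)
The plan is the soft local times argument, as in \cite[Proposition B.3]{CKKR24}. Everything is built on the single Poisson point process $\Lambda$ on $\dZ\times\posreals$ introduced before Lemma~\ref{Lemma: Soft Local Times}.

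\textbf{Coupling.} I would realise all three configurations from $\Lambda$ as follows.
\begin{itemize}
\item For $z\in[t,H-t]$, set $\eta^{\rho\pm\varepsilon}(z)=\Lambda(\{z\}\times(0,\rho\pm\varepsilon])$. These have law $\mu_{\rho\pm\varepsilon}$ on that window; outside it they are sampled independently.
\item For $\eta_t$, use the soft local times construction. Particle $i$, started at $x_i$, is sent to the location of the $i$-th point of $\Lambda$ lying under the running curve $\sum_{j\le i}\xi_j g_j$.
\item By \cite{PopovTeixeira} (see also \cite{HHSST15}), the resulting positions are independent, with particle $i$ having law $q_t(x_i,\cdot)$. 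Since the APCRW particles are independent, the configuration obtained at time $t$ has law $\Penv^{\eta_0}$ at time $t$.
\item Moreover, the set of points of $\Lambda$ lying under $G_{\eta_0}$ is exactly the multiset of particle positions at time $t$. Hence $\eta_t(z)=\Lambda(\{z\}\times(0,G_{\eta_0}(z)])$.
\end{itemize}
This gives monotone comparison for free. On the event $\{G_{\eta_0}(z)\ge\rho-\varepsilon\ \forall z\in[t,H-t]\}$ we have $\eta^{\rho-\varepsilon}|_{[t,H-t]}\preccurlyeq\eta_t|_{[t,H-t]}$. On the event $\{G_{\eta_0}(z)\le\rho+\varepsilon\ \forall z\in[t,H-t]\}$ we get the reverse inclusion with $\eta^{\rho+\varepsilon}$. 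So it suffices to bound $\dQ(\exists z\in[t,H-t]: G_{\eta_0}(z)<\rho-\varepsilon)$, respectively with $>\rho+\varepsilon$, by $H\exp(-c(\rho+\varepsilon)^{-1}\varepsilon^2\sqrt t)$.

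\textbf{Expectation.} Lemma~\ref{Lemma: Soft Local Times} bounds $\Ecoupling[G_{\eta_0}(z)]$ by $(\rho\pm\varepsilon/2)(1+\Cr{slt exp tightness}\ell\sqrt{(\log t)/t})$. Under the hypothesis $(\rho+\varepsilon)\ell\sqrt{(\log t)/t}<\Cr{upper bound on t with epsilon in sltc}\varepsilon$, with $\Cr{upper bound on t with epsilon in sltc}$ small, this lies within $3\varepsilon/4$ of $\rho$. That leaves a gap of at least $\varepsilon/4$ from the threshold $\rho\pm\varepsilon$.

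\textbf{Concentration.} Fix $z$. The variable $G_{\eta_0}(z)=\sum_i\xi_i q_t(x_i,z)$ is a sum of independent $\mathrm{Exp}(1)$ variables with deterministic weights $a_i=q_t(x_i,z)$. By \eqref{eq: max of heat kernel}, $\max_i a_i\le Ct^{-1/2}$. The empirical density assumption together with the heat kernel tails \eqref{eq:heat kernel dev2} gives $\sum_i a_i\le C(\rho+\varepsilon)$. I then apply a Bernstein (Chernoff) bound for weighted exponentials:
\[
\dQ\big(|G_{\eta_0}(z)-\Ecoupling G_{\eta_0}(z)|>\varepsilon/4\big)\le 2\exp\Big(-c\min\Big(\tfrac{\varepsilon^2}{\sum_i a_i^2},\tfrac{\varepsilon}{\max_i a_i}\Big)\Big).
\]
Using $\sum_i a_i^2\le\max_i a_i\sum_i a_i\le C(\rho+\varepsilon)t^{-1/2}$ and $\varepsilon<1$, the exponent is at least $c(\rho+\varepsilon)^{-1}\varepsilon^2\sqrt t$. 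A union bound over the at most $H$ sites $z\in[t,H-t]$ then gives the claim.

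\textbf{Main obstacle.} Two technical points need care. First, the soft local times identity must be checked for infinitely many particles on the window. This is handled by truncating to particles within distance $t$, or by using the Poisson points outside the window, as in \cite{CKKR24}. Second, the concentration step needs the exact independent-exponential structure of the $\xi_i$; the delicate part of the argument is already contained in Lemma~\ref{Lemma: Soft Local Times}.
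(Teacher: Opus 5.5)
Your route is the paper's: the paper proves this lemma by transferring the soft local times argument of \cite[Proposition B.3]{CKKR24}, with \eqref{eq: soft local time bounds} and \eqref{eq: max of heat kernel} replacing (B.9) and (B.13). That is exactly your outline: build the three configurations from $\Lambda$, reduce to bounds on $G_{\eta_0}(z)$, concentrate, and take a union bound. Your construction and the monotone comparison are fine. However, the concentration step as you wrote it is only valid in the ``resp.'' case.

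\textbf{Where it fails.} For the inclusion $\eta^{\rho-\varepsilon}\vert_{[t,H-t]}\preccurlyeq\eta_t\vert_{[t,H-t]}$, the only hypothesis is the lower bound $\eta_0(I)\ge(\rho-\varepsilon/2)\ell$. Nothing bounds $\eta_0$ from above. For example, $\eta_0$ may equal a huge constant on $[0,H]$, which is precisely the situation of the denser configuration in \ref{pe:couplings}, where this lemma is applied. In that case only the lower bound of Lemma~\ref{Lemma: Soft Local Times} is available, since its upper bound is proved using $\eta_0(I_i)\le(\rho+\varepsilon/2)\vert I_i\vert$. So $S:=\sum_i a_i=\Ecoupling[G_{\eta_0}(z)]$ can be arbitrarily large. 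Your claims that this mean lies within $3\varepsilon/4$ of $\rho$, and that $\sum_i a_i\le C(\rho+\varepsilon)$, are then false. Your two-sided Bernstein bound at the fixed scale $\varepsilon/4$ also gives nothing: the standard deviation can be of order $\sqrt{S}\,t^{-1/4}$, which exceeds $\varepsilon/4$ once $S\gtrsim\varepsilon^2\sqrt t$.

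\textbf{How to repair it.} Use only the lower tail, which for exponentials is sub-Gaussian at every scale. Since $\log\dE[e^{-\theta(\xi-1)}]=\theta-\log(1+\theta)\le\theta^2/2$ for all $\theta\ge0$, a Chernoff bound together with $\sum_i a_i^2\le\max_i a_i\,S\le Ct^{-1/2}S$ (from \eqref{eq: max of heat kernel}) gives
\begin{equation*}
\dQ\big(G_{\eta_0}(z)\le\rho-\varepsilon\big)\le\exp\Big(-\frac{(S-\rho+\varepsilon)^2}{2\sum_i a_i^2}\Big)\le\exp\Big(-\frac{\sqrt t}{2C}\cdot\frac{(S-\rho+\varepsilon)^2}{S}\Big).
\end{equation*}
The map $S\mapsto(S-\rho+\varepsilon)^2/S$ is increasing on $(\rho-\varepsilon,\infty)$. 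Combined with $S\ge\rho-3\varepsilon/4$ (from the lower bound in Lemma~\ref{Lemma: Soft Local Times} and your choice of a small constant), the exponent is at least $\varepsilon^2\sqrt t/(32C(\rho+\varepsilon))$, uniformly in how large $\eta_0$ is. The union bound over $z\in[t,H-t]$ then finishes this case.

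In the other case your argument is correct as written. There the upper density hypothesis does give $S\le\rho+3\varepsilon/4$, hence $\sum_i a_i^2\le C(\rho+\varepsilon)t^{-1/2}$, and only the upper tail is needed.
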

\begin{proof}
    The proof of Lemma \ref{Lemma: Soft Local Time Coupling} is the same as the proof of \cite[Proposition B.3]{CKKR24}, replacing the symmetric heat kernel with the asymmetric heat kernel $\widebar{q_n}$ used in Lemma \ref{Lemma: Heat Kernel}. In particular,~\eqref{eq: soft local time bounds} and~\eqref{eq: max of heat kernel} play the role of (B.9) and (B.13) in \cite{CKKR24}, respectively. 
\end{proof}

\subsection{Proof of the conditions}
Let $J$ be a bounded open interval of $\mathbb{R}_+$ as in Section \ref{subsec:DefRWRDREgeneral}, and consider the APCRW for $\rho \in J$, and fixed values of the other parameters $\alpha, q\in (0,1)$. Recall that $\nu=1$ in this setting.  
\begin{lemma} \label{lemma: density change for APCRW}
Condition \ref{pe:densitychange} holds for the APCRW. 
\end{lemma}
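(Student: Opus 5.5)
The plan is to deduce \ref{pe:densitychange} from the sandwiching Lemma~\ref{Lemma: Soft Local Time Coupling}, applied with a deterministic initial condition and translated to the window $[-H,H]$. We treat the upper bound; the lower bound is symmetric, using the other half of that lemma. Take $\eta_0$ such that $\eta_0(I)\le(\rho+\varepsilon)\ell$ for every interval $I\subseteq[-H,H]$ of length $\ell$. We apply Lemma~\ref{Lemma: Soft Local Time Coupling} with $\rho+\varepsilon$ in the role of $\rho$ and $2\varepsilon$ in the role of $\varepsilon$ (or $\rho+\tfrac32\varepsilon$ and $\varepsilon$; any choice works so that the hypothesis $\eta_0(I)\le(\rho'+\varepsilon'/2)\ell$ holds and the dominating density is at most $\rho+2\varepsilon$). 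Shifting space by $-H$ and using spatial translation invariance from \ref{pe:markov}, this gives a coupling $\dQ$ of $\eta_t\sim\mathbf{P}^{\eta_0}$ and $\eta^{+}\sim\mu_{\rho+2\varepsilon}=\Poisson(\rho+2\varepsilon)^{\otimes\dZ}$. On an event $G$ we then have $\eta_t\preccurlyeq\eta^{+}$ on $[-H+t,H-t]\supseteq[-H+2t,H-2t]$, with $\dQ(G^c)\le 2H\exp(-c(\rho+\varepsilon)^{-1}\varepsilon^2\sqrt t)$. The window is inside $[0,2H]$, so the lemma is used with $2H$ in place of $H$.

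The next step is to check the hypotheses of the lemma, namely $\Cr{lower bound for t in sltc}\ell^2<t<H$ and $(\rho+2\varepsilon)\ell\sqrt{\log t/t}<\Cr{upper bound on t with epsilon in sltc}\varepsilon$. Both follow from $H>4t>\Cr{densitystable}\ell^2\varepsilon^{-2}(1+\log^3 t)$, once $\Cr{densitystable}$ is taken large depending on $\sup J$ (recall $\nu=1$ and $J$ is bounded). Indeed, $\ell\sqrt{\log t/t}\le \varepsilon\,\Cr{densitystable}^{-1/2}\sqrt{\log t/(1+\log^3 t)}$, which is small.

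On $G$, for every interval $I'$ of length $\ell'$ in $[-H+2t,H-2t]$ we have $\eta_t(I')\le\eta^{+}(I')$. Poisson concentration (equivalently \ref{pe:density} applied to $\mu_{\rho+2\varepsilon}$ with deviation $\varepsilon$) gives $\dP(\eta^{+}(I')\ge(\rho+3\varepsilon)\ell')\le 2\exp(-\Cr{densitydev}\varepsilon^2\ell')$. A union bound over the at most $2H$ such intervals bounds the failure probability by $4H\exp(-c\varepsilon^2\ell')$. This is precisely why the factor $4H$ appears.

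The main obstacle is combining the two error terms into the single bound $4H\exp(-\Cr{densitystableexpo}\varepsilon^2\ell')$. The constraint $\ell'\le\sqrt t$ ensures $\varepsilon^2\ell'\le\varepsilon^2\sqrt t$. Since $\rho+\varepsilon$ is bounded on $J$, the soft-local-time term $2H\exp(-c(\rho+\varepsilon)^{-1}\varepsilon^2\sqrt t)$ is at most $2H\exp(-c'\varepsilon^2\ell')$. Choosing $\Cr{densitystableexpo}\le\min(c',\Cr{densitydev})$ then closes the argument: the total is at most $2H e^{-\Cr{densitystableexpo}\varepsilon^2\ell'}+2He^{-\Cr{densitystableexpo}\varepsilon^2\ell'}$. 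One should double check that the boundary losses in Lemma~\ref{Lemma: Soft Local Time Coupling}, namely $t$ on each side, are indeed within the allowed $2t$ once the drift $d_t=\alpha(2q-1)t$ is accounted for. Since $|d_t|\le t$, I expect the lemma's window $[t,H-t]$ already absorbs this; if not, we enlarge the loss using the margin $2\nu t$.
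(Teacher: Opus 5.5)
This is the paper's route: its proof defers to \cite[Lemma B.5]{CKKR24} with Lemma~\ref{Lemma: Soft Local Time Coupling} as the sandwich. That is, it dominates $\eta_t$ on $[-H+t,H-t]$ by a Poisson product field, then uses Poisson concentration, a union bound over the intervals $I'$, and $\ell'\le\sqrt t$ to absorb the coupling error.

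Fix one slip, though: your explicit parameter choices violate your own criterion. The pair $(\rho+\varepsilon,2\varepsilon)$ gives a dominating density $\rho+3\varepsilon$, which is exactly the threshold, so the concentration step yields nothing. The pair $(\rho+\tfrac32\varepsilon,\varepsilon)$ gives $\rho+\tfrac52\varepsilon$. By contrast, $(\rho+\varepsilon,\varepsilon/2)$ gives $\rho+\tfrac32\varepsilon$ and stays in the lemma's parameter range for every $\varepsilon\in(0,1)$ once the lemma's constant $K$ is taken large. With a one-sided Poisson tail, your two error terms are then each $2He^{-c\varepsilon^2\ell'}$, totalling the required $4H$ rather than the $6H$ your bookkeeping currently gives.
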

\begin{proof}
    The proof is the same as the proof of Lemma B.5 in \cite{CKKR24}, using Lemma \ref{Lemma: Soft Local Time Coupling} in lieu of Proposition B.3 in \cite{CKKR24} in order to translate to our environment. This allows us to couple the empirical distribution of the environment with the invariant Poisson product measures $\mu_{\rho\pm \varepsilon}$, the error term being similar in Lemma \ref{Lemma: Soft Local Time Coupling} and \cite[Proposition B.3]{CKKR24}. Since the invariant measures are the same in both settings, the second error term arising from the fluctuations of their empirical densities coincide. 
\end{proof}

\begin{lemma} \label{lemma: drift for APCRW}
Condition \ref{pe:drift} holds for the APCRW.
\end{lemma}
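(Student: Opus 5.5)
We will construct the coupling particle by particle, using the fact that the APCRW is a cloud of independent walkers. Given $\eta_0\vert_{[-H,H]}\succcurlyeq\eta'_0\vert_{[-H,H]}$, we label the particles. For each site $x\in[-H,H]$, we pair the $\eta'_0(x)$ particles of $\eta'_0$ at $x$ with $\eta'_0(x)$ of the $\eta_0(x)$ particles of $\eta_0$ at $x$. Each paired couple follows one common trajectory of a lazy asymmetric walk (laziness $1-\alpha$, right-step probability $\alpha q$). All remaining particles move independently: the unpaired particles of $\eta_0$ inside $[-H,H]$, all particles of $\eta_0$ and $\eta'_0$ outside $[-H,H]$, and particles of $\eta'_0$ outside $[-H,H]$ as a separate family. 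Each marginal is then a collection of independent walkers started from $\eta_0$ (respectively $\eta'_0$), so the marginals are $\mathbf{P}^{\eta_0}$ and $\mathbf{P}^{\eta'_0}$.

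Under this coupling, at every time $s$ and every site $y$, each $\eta'$-particle started in $[-H,H]$ sits together with its $\eta$-partner. Hence $\eta'_s(y)\le \eta_s(y)+\#\{\eta'\text{-particles started outside }[-H,H]\text{ located at }y\}$. Domination on $[-H+2kt,H-2kt]$ for all $s\in[0,t]$ (recall $\nu=1$) can fail only if some $\eta'$-particle started outside $[-H,H]$ enters that window before time $t$. A walker moves at most one site per step, so in $t$ steps it travels a distance at most $t\le 2kt$ for $k\ge1$. Such a walker therefore can never reach the window, and the failure probability is exactly $0\le 20\exp(-kt/4)$. This matches the remark in Section~\ref{subsec:apcrwsum} that \ref{pe:drift} fails with probability zero for the APCRW.

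The only delicate points are bookkeeping ones. First, $\eta'_0$ may have infinitely many particles, so the labelling must be measurable; an enumeration of sites and indices within sites handles this. Second, we must check that the window $[-H+2kt,H-2kt]$ may be empty when $2kt>H$; in that case the claim is vacuous. I do not expect a genuine obstacle here, since the speed-one bound on the walkers makes the argument deterministic.
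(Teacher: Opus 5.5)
Your proposal is correct and follows essentially the same route as the paper. You pair the $\eta'$-particles on $[-H,H]$ injectively with $\eta$-particles at the same site, let each pair share a trajectory while all other particles move independently, and then use the one-site-per-step bound: no particle started outside $[-H,H]$ can reach $[-H+2kt,H-2kt]$ by time $t$, so the domination event holds almost surely. Your version is in fact slightly more explicit than the paper's about the unpaired $\eta'$-particles started outside $[-H,H]$.
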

\begin{proof}
    By assumption, $\eta_0\vert_{[-H,H]} \succcurlyeq \eta_0' \vert_{[-H, H]}$. 
    We can couple $\eta$ and $\eta'$ by injectively pairing particles in $\eta'$ with particles at the same site at time 0 in $\eta$ (i.e. particles in $\eta'_0(x)$ can be coupled together with particles in $\eta_0(x)$). 
    Those paired particles will then follow the same asymmetric lazy random walk trajectory, independently of other pairs. 
    Then, let any unpaired particles in $\eta$ follow independent asymmetric lazy random walks. 
    Since each particle in the environment can cover at most one step at a time, 
    no particle from outside $[-H, H]$ can drift into $[-H + t, H - t]$ by time $t$. 
    Thus, the event in \eqref{eq:SEPdriftdeviations} occurs deterministically. 
\end{proof}

\begin{lemma} \label{lemma: couplings for APCRW}
Let $K> 1$ such that $J \subseteq (K^{-1}, K)$, $\rho \in (K^{-1}, K)$, $\varepsilon \in (0, (K-\rho) \wedge \rho\wedge 1$, and $H, \ell, t \in \dN$ be such that $\Cr{lower bound for t in sltc}\ell^2 < t < H/2$ and $(\rho + 3 \varepsilon/2)(t^{-1}\log t)^{1/2}\ell < \Cr{upper bound on t with epsilon in sltc}\varepsilon/4$.
Let $\eta_0, \eta'_0 \in \Sigma$ be such that for every interval $I \subseteq [0,H]$ of length $\ell$, we have that $\eta_0(I) \geq (\rho + 3\varepsilon/4)\ell$ and $\eta'_0(I) \leq (\rho + \varepsilon/4)\ell$. 
We then have a coupling $\dQ$ of $\eta$ and $\eta'$ such that
\begin{equation}
    \dQ \left( \eta'_t|_{[t, H-t]} \preccurlyeq \eta_t|_{[t, H-t]} \right)
    \geq 1 - 4H \exp \left( -\Cr{prob bound for G in sltc} (\rho + \varepsilon)^{-1} \varepsilon^2 \sqrt{t}/4 \right).
\end{equation}

We then also have that condition \ref{pe:couplings} holds for the APCRW when $\nu = 1$. 
Additionally, $\dQ$ is local, meaning that $(\eta_t, \eta'_t)|_{[t, H-t]}$ depends only on the initial conditions $(\eta_t, \eta'_t)|_{[0, H]}$.
\end{lemma}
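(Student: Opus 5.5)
The plan is to sandwich both environments around a common Poisson field, using Lemma~\ref{Lemma: Soft Local Time Coupling} twice. Set $\rho_\star := \rho+\varepsilon/2$ and $\varepsilon_\star := \varepsilon/4$. Then the hypotheses translate as follows:
\[
\eta_0(I)\geq (\rho_\star+\varepsilon_\star/2)\ell, \qquad \eta'_0(I)\leq (\rho_\star-\varepsilon_\star/2)\ell .
\]
Here $(\rho_\star+\varepsilon_\star/2)\ell = (\rho+5\varepsilon/8)\ell$ and $(\rho_\star-\varepsilon_\star/2)\ell=(\rho+3\varepsilon/8)\ell$, so both inequalities follow from the assumptions.

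Apply Lemma~\ref{Lemma: Soft Local Time Coupling} to $\eta_0$ with density $\rho+5\varepsilon/8$ and tolerance $\varepsilon/8$. Its lower-bound version gives a coupling of $\eta_t$ with $\zeta\sim\mu_{\rho+\varepsilon/2}$ such that $\zeta|_{[t,H-t]}\preccurlyeq\eta_t|_{[t,H-t]}$ with high probability. Apply it again to $\eta'_0$, with density $\rho+3\varepsilon/8$ and tolerance $\varepsilon/8$, in the upper-bound direction. This couples $\eta'_t$ with $\zeta'\sim\mu_{\rho+\varepsilon/2}$ such that $\eta'_t|_{[t,H-t]}\preccurlyeq\zeta'|_{[t,H-t]}$.

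Since $\zeta$ and $\zeta'$ have the same law, glue the two couplings along this common marginal. Concretely, sample $\zeta$, then sample $\eta_t$ and $\eta'_t$ from their conditional laws given $\zeta=\zeta'$, independently of each other; this is the standard gluing lemma on Polish spaces. To get a coupling of the whole trajectories $\eta,\eta'$ rather than only their time-$t$ marginals, glue the conditional laws of the paths given their time-$t$ values; by \ref{pe:markov} the marginals are preserved. A union bound on the two failure events then gives
\[
\dQ\bigl(\eta'_t|_{[t,H-t]}\preccurlyeq\eta_t|_{[t,H-t]}\bigr)\geq 1-2H\exp\bigl(-\Cr{prob bound for G in sltc}(\rho+\varepsilon)^{-1}(\varepsilon/8)^2\sqrt{t}\cdot c\bigr).
\]
Adjusting the constants, by absorbing the factor $1/16$ and the replacement of $\rho+5\varepsilon/8$ by $\rho+\varepsilon$ into $\Cr{prob bound for G in sltc}$, or by allowing the prefactor $4H$, yields the stated bound.

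The parameter conditions of Lemma~\ref{Lemma: Soft Local Time Coupling} must be checked for the new density and tolerance: $\Cr{lower bound for t in sltc}\ell^2<t<H/2$ carries over directly, and $(\rho+5\varepsilon/8+\varepsilon/8)\sqrt{\log t/t}\,\ell<\Cr{upper bound on t with epsilon in sltc}\varepsilon/8$ follows from the assumed $(\rho+3\varepsilon/2)(\log t/t)^{1/2}\ell<\Cr{upper bound on t with epsilon in sltc}\varepsilon/4$, up to a factor $2$ that may require reducing $\Cr{upper bound on t with epsilon in sltc}$ once and for all. Locality is inherited from the construction: the soft local time coupling only uses particles initially in $[0,H]$ together with the Poisson field on $I_t$, and particles move at most one step per unit of time.

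To deduce \ref{pe:couplings} with $\nu=1$, shift $[-H,H]$ to $[0,2H]$ and take $\ell=\lfloor t^{1/4}\rfloor$, so that $\Cr{lower bound for t in sltc}\ell^2\leq \Cr{lower bound for t in sltc}\sqrt t<t$. The hypothesis $t^{1/4}>\Cr{SEPcoupling}\varepsilon^{-2}(1+\log^3 t)$ implies $\ell\sqrt{\log t/t}\lesssim t^{-1/4}\sqrt{\log t}\ll\varepsilon$, which gives the second condition of the lemma for $\Cr{SEPcoupling}$ large. The hypothesis there covers intervals of lengths between $\lfloor\ell/2\rfloor$ and $\ell$, in particular length $\ell$, which is all the lemma needs. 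The coupling window $[t,2H-t]\supseteq[-H+4t,H-4t]$ after shifting back. Finally, the error $4H\exp(-c\varepsilon^2\sqrt t)$ is bounded by $\Cr{SEPcoupling2}tH\exp(-(\Cr{SEPcoupling2}(1+\nu^{-1}))^{-1}\varepsilon^2t^{1/4})$, since $\sqrt t\geq t^{1/4}$ and $\rho+\varepsilon\leq 2K$.

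The main obstacle is the gluing step: Lemma~\ref{Lemma: Soft Local Time Coupling} yields couplings of time-$t$ marginals, and these must be made consistent with full path laws while retaining locality. This is handled by gluing along time-$t$ states and invoking the Markov property.
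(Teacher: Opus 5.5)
Your proposal is correct and is essentially the paper's argument, which defers to \cite[Lemma B.7]{CKKR24}. Both sandwich $\eta'_t$ below and $\eta_t$ above a common Poisson field of density $\rho+\varepsilon/2$ using two applications of Lemma~\ref{Lemma: Soft Local Time Coupling}, take a union bound, and specialise to $\ell=\lfloor t^{1/4}\rfloor$ to obtain \ref{pe:couplings}. The only difference is cosmetic: in the soft-local-time framework the common field comes from running both constructions off the same Poisson process $\Lambda$, so no abstract gluing is needed and locality is immediate. Your constant bookkeeping works only by shrinking $\Cr{prob bound for G in sltc}$ (and $\Cr{upper bound on t with epsilon in sltc}$) once and for all, because the prefactor $4H$ cannot absorb a factor in the exponent.
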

\begin{proof}
    The proof is the same as the proof of \cite[Lemma B.7]{CKKR24}, using  Lemma \ref{Lemma: Soft Local Time Coupling} instead of Proposition B.3 (in \cite{CKKR24}) in order to translate to the asymmetric case, and noting that P1 in \cite{CKKR24} is the same as \ref{pe:markov}.

\end{proof}

\begin{lemma} \label{lemma: c2}
Condition \ref{pe:compatible} holds for the APCRW. 
\end{lemma}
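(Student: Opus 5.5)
The plan is to build the coupling $\dQ$ required by \ref{pe:compatible} by running the two APCRW couplings already established, \ref{pe:drift} (Lemma~\ref{lemma: drift for APCRW}) and \ref{pe:couplings} (Lemma~\ref{lemma: couplings for APCRW}), simultaneously on two separate populations of particles. Since the APCRW consists of independent particles, this superposition is legitimate. As in \cite[Lemma B.8]{CKKR24}, we split the initial configurations into an \emph{inner} part supported on $[-H_1,H_1]$ and an \emph{outer} part supported on $[-H_2,H_2]\setminus[-H_1,H_1]$, with the (irrelevant) rest evolving independently. Because distinct particles move independently, $\eta_t=\eta^{\rm in}_t+\eta^{\rm out}_t$ and similarly for $\eta'$, and the correct marginals $\mathbf{P}^{\eta_0},\mathbf{P}^{\eta'_0}$ are preserved however we couple the inner pair and the outer pair separately.

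\textbf{Inner part.} We use the particle pairing from the proof of Lemma~\ref{lemma: drift for APCRW}: each $\eta'$-particle in $[-H_1,H_1]$ is matched with an $\eta$-particle at the same site, and the two follow the same lazy asymmetric trajectory. Particles move at most one site per step. Hence at every $s\le t$ the inner pair satisfies $\eta^{\rm in}_s\succcurlyeq\eta'^{\rm in}_s$ everywhere. Outer $\eta'$-particles cannot reach $[-H_1+4t,H_1-4t]$ before time $t$. Therefore \eqref{eq:compatible1} holds deterministically, which is stronger than the stated bound.

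\textbf{Outer part.} On $[-H_2,H_2]$ we apply the coupling of Lemma~\ref{lemma: couplings for APCRW} (translated and used on each side) to obtain $\eta_t\succcurlyeq\eta'_t$ on $[-H_2+6t,H_2-6t]$. The subtlety is that this lemma is stated for full configurations with density bounds on every $\ell$-interval, whereas near $\pm H_1$ the outer configurations are truncated.

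\textbf{Main obstacle: the seam at $\pm H_1$.} I expect the main difficulty to be handling the region around $\pm H_1$, where the inner/outer split breaks the density hypotheses. I would avoid truncating there. Instead, keep the paired inner $\eta'$-particles together with their partners, and apply the soft local time sandwich of Lemma~\ref{Lemma: Soft Local Time Coupling} to the \emph{unpaired} particles only, namely the $\eta$-excess and the outer $\eta'$-particles. These are sandwiched against $\mu_{\rho+\varepsilon/2}$ and $\mu_{\rho+\varepsilon/2}$-type Poisson fields on the sub-window where their empirical densities are controlled. By the locality statement in Lemma~\ref{lemma: couplings for APCRW}, what happens near $[-H_1,H_1]$ only affects a $t$-neighbourhood of the seam. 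That neighbourhood is already covered by \eqref{eq:compatible1}, because $H_1>10t$ leaves room between $H_1-4t$ and $H_1+t$.

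\textbf{Parameters and error bound.} The assumptions $t>\ell^{100}$ and $\ell>C\varepsilon^{-2}\log^3\ell$ give $C\ell^2<t$ and $(\rho+\varepsilon)\ell\sqrt{\log t/t}<c\varepsilon$, which are exactly the conditions needed for Lemma~\ref{lemma: couplings for APCRW}. We sum its error over the $O(\ell^4)$ window/scale choices, taking interval lengths between $\lfloor\ell/2\rfloor$ and $\ell$. With a union bound over positions in $[-H_2,H_2]$ and $\sqrt t\ge\ell$, this yields the bound $5C\ell^4H_2\exp(-C^{-1}\tfrac12\varepsilon^2\ell)$ in \eqref{eq:compatible2}, after enlarging $\Cr{SEPcoupling2}$.
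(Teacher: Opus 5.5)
Your inner step is fine: pairing the particles of $[-H_1,H_1]$ and using unit speed gives \eqref{eq:compatible1} surely. The gap is the seam at $\pm H_1$, and it is not closed.

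\textbf{Where domination is missing.} Under your coupling, $\eta_t\succcurlyeq\eta'_t$ is guaranteed only on two regions. The first is $\{|x|\le H_1-t\}$, by the pairing, since outer $\eta'$-particles cannot get there. The second is $\{H_1+t\le |x|\le H_2-t\}$, by Lemma~\ref{lemma: couplings for APCRW} on the outer windows. On the strip $S=\{H_1-t<|x|<H_1+t\}$, three families meet: outer $\eta'$-particles started near $\pm H_1$, inner excess $\eta$-particles, and outer $\eta$-particles. The inner excess is independent of the outer families, and the covering lemma says nothing within distance $t$ of its window's edge. So nothing in your construction gives domination on $S$.

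Because $H_2-H_1-1>10t$, we have $S\subset[-H_2+6t,H_2-6t]$, so \eqref{eq:compatible2} does require domination there. Your claim that this neighbourhood ``is already covered by \eqref{eq:compatible1}'' has it backwards. \eqref{eq:compatible1} concerns $[-H_1+4t,H_1-4t]$, which is disjoint from $S$. The room between $H_1-4t$ and $H_1+t$ is exactly the uncovered part.

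\textbf{Why the proposed remedy does not close it.} Lemma~\ref{Lemma: Soft Local Time Coupling} needs one uniform density bound on every $\ell$-interval of the window. Across $\pm H_1$ the two unpaired families are inhomogeneous:
\begin{itemize}
\item the $\eta$-excess may have density only about $\varepsilon/2$ inside $[-H_1,H_1]$;
\item the unpaired $\eta'$ has density up to $\rho+\varepsilon/4$ just outside.
\end{itemize}
No single field $\mu_a$ fits between them, since that would need $\varepsilon/2>a>\rho+\varepsilon/4$. Restricting to sub-windows where the densities are controlled excludes precisely $S$. What you actually have is only the interval-wise bound $(\text{excess}-\text{unpaired }\eta')(I)=\eta_0(I)-\eta'_0(I)\ge \varepsilon|I|/2$. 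Using it requires an inhomogeneous comparison, which the homogeneous sandwich lemma does not provide.

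\textbf{A possible repair.} Build $(\eta,\eta')$ by soft local times with a single Poisson field $\Lambda$, and enumerate the paired particles first, identically, in both $\eta_0$ and $\eta'_0$.
\begin{itemize}
\item The paired particles then receive identical weights $\xi_i$ and identical endpoints. Give them identical bridges, and \eqref{eq:compatible1} holds pathwise.
\item $G_{\eta_0}$ and $G_{\eta'_0}$ are still soft local times of the full configurations, with i.i.d.\ $\mathrm{Exp}(1)$ weights, and these configurations have homogeneous density bounds. So the estimates behind Lemma~\ref{Lemma: Soft Local Time Coupling} give $G_{\eta_0}\ge\rho+\varepsilon/2\ge G_{\eta'_0}$ on the whole window with high probability. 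This yields \eqref{eq:compatible2}.
\end{itemize}

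\textbf{Comparison with the paper.} The paper instead transfers \cite[Lemma B.8]{CKKR24}, using Lemmas~\ref{lemma: density change for APCRW}, \ref{lemma: drift for APCRW} and \ref{lemma: couplings for APCRW}. That route uses the density conservation \ref{pe:densitychange}, which your plan never invokes. Note also that the hypotheses and error term in \ref{pe:compatible} match those of \ref{pe:couplings} run for time $\ell^4$, not for time $t$ as in your outer step.
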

\begin{proof}
    The proof is the same as the proof of Lemma B.8 in \cite{CKKR24}, using Lemma~\ref{lemma: density change for APCRW} instead of \cite[Lemma B.5]{CKKR24}, Lemma~\ref{lemma: drift for APCRW} instead of \cite[Lemma B.6]{CKKR24}, Lemma~\ref{lemma: couplings for APCRW} instead of \cite[Lemma B.7]{CKKR24} and having all particles follow asymmetric lazy random walks instead of symmetric lazy random walks.
\end{proof}

\begin{lemma}
Condition \ref{pe:nacelle} holds for the APCRW.
\end{lemma}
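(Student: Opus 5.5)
The plan is to prove \ref{pe:nacelle} directly, with an explicit particle-level coupling. Lemma~\ref{lemma: drift for APCRW} shows this coupling never fails on the inner window, which disposes of \eqref{eq:penacelleNEW2}. The sprinkling probability \eqref{eq:penacelleNEW} then follows from an elementary lower bound on a specific finite-range scenario.

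\textbf{The coupling.} Since $\eta_0\succcurlyeq\eta'_0$ on $[-H,H]$, pair each particle of $\eta'_0$ in $[-H,H]$ injectively with a particle of $\eta_0$ at the same site. Paired particles follow the same lazy asymmetric walk; all other particles move independently. Because $\eta_0(I)\geq\eta'_0(I)+1$, at least one particle of $\eta_0$ in $I=[0,\ell]$ is unpaired; call it $p$, starting at $y\in[0,\ell]$. Particles move by at most one site per step, so no particle started outside $[-H,H]$ reaches $[-H+\ell,H-\ell]$ by time $\ell$. Since $2\nu k\ell\geq\ell$, the domination on $[-H+2\nu k\ell,H-2\nu k\ell]$ holds deterministically for all $s\in[0,\ell]$. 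The failure probability in \eqref{eq:penacelleNEW2} is therefore $0$, and the left-hand side is trivially $\leq 20e^{-k\nu\ell/4}$. The remaining inequality $20e^{-k\ell/4}\leq\delta'$ is exactly what the threshold $k\geq\Cr{sprinkler2}$ provides: $\delta'=(\Cr{sprinkler1}p_\circ(1-p_\bullet))^{6(\rho+1)\ell}$ is exponential in $\ell$ with a rate depending on $\rho\in J$, $\Cr{sprinkler1}$ and $p_\circ,p_\bullet$, and $k$ multiplies the rate on the left.

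\textbf{The sprinkling event.} I would lower bound the probability of a single scenario. First, $p$ ends at $x\in\{0,1\}$ at time $\ell$. Second, every particle of $\eta'$ that could reach $x$ by time $\ell$ avoids $x$ at time $\ell$. Only particles starting in $[x-\ell,x+\ell]\subseteq[-3\ell+1,3\ell]$ can reach $x$, and there are at most $6(\rho+1)\ell$ of them.

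For $p$, pick a deterministic path of length $\ell$ from $y$ to $x$, using lazy steps to fix parity and number of moves, since $|y-x|\leq\ell$. It has probability at least $(\min(1-\alpha,\alpha q,\alpha(1-q)))^{\ell}$.

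For each $\eta'$ particle (and hence its paired copy) among the at most $6(\rho+1)\ell$ relevant ones, prescribe a path avoiding $x$ at time $\ell$. For instance, it stays put if it is not at $x$ at time $\ell$ anyway, or takes one step away at the last time. Each such path has probability at least $c(\alpha,q)^{\ell}$. One must check that the paired particle in $\eta$ doing the same does not matter: it need not vacate $x$, because $p$ already occupies $x$ in $\eta$. Independence of the particles gives a total probability at least $c^{\ell}\cdot c^{6(\rho+1)\ell\cdot\ell}$.

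\textbf{Main obstacle.} This last bound is $c^{\ell^2}$, not of the form $\Cr{sprinkler1}^{6(\rho+1)\ell}$. The step I expect to be hardest is improving the per-particle cost from $c^{\ell}$ to $O(1)$. The fix is to use that each particle only needs to avoid the single space-time point $(x,\ell)$. Unconditionally, a lazy walk is at $x$ at time $\ell$ with probability $\leq C\ell^{-1/2}\leq 1-c_0$, by \eqref{eq: max of heat kernel}. Because the $\eta'$ particles move independently, the probability that all of them avoid $(x,\ell)$ is at least $(c_0)^{6(\rho+1)\ell}$, for $\ell$ larger than a constant; small $\ell$ is handled by shrinking $\Cr{sprinkler1}$. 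The event for $p$ is independent of these particles, since $p$ is unpaired. Multiplying gives
\[
\mathbb{Q}\bigl(\eta_\ell(x)>0,\ \eta'_\ell(x)=0\bigr)\geq c_1^{\ell}\,c_0^{6(\rho+1)\ell}\geq 2\,\Cr{sprinkler1}^{6(\rho+1)\ell}
\]
for a suitable $\Cr{sprinkler1}=\Cr{sprinkler1}(\alpha,q)\in(0,1)$, which is \eqref{eq:penacelleNEW}.
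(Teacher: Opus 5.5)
Your proof is correct and follows the paper's approach. It uses the same site-wise pairing coupling, so the domination in \eqref{eq:penacelleNEW2} holds deterministically because particles move at most one site per step. It also uses the same lower bound: $r^{\ell}$ for a prescribed path of the unpaired particle, times a constant cost for each of the at most $6(\rho+1)\ell$ particles of $\eta'$ started in $[-3\ell+1,3\ell]$.

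The only difference is how you get that per-particle constant. The paper conditions on the configuration at time $\ell-1$ and prescribes only the last step of each $\eta'$ particle in $\{x-1,x,x+1\}$, at cost at least $\alpha\min(q,1-q)$ each. This works uniformly in $\ell\geq 1$, so your appeal to \eqref{eq: max of heat kernel} and your separate treatment of small $\ell$ are not needed.
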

\begin{proof}
    
Let $I = [0, \ell]$ be the interval where we have (at time $0$) the extra particle given by the conditions of \ref{pe:nacelle}. Denote by $Z_t$ the position of this particle at time $t$.

Let $\Qcoupling$ be a coupling between $\eta$ and $\eta'$ on $I$. Under $\Qcoupling$, pair particles in $\eta'_0(I)$ injectively with particles in $\eta_0(I)$ in the same location, and have them undergo the same random walk (independently of other particles). Then let any extra particles in $\eta(I)$ perform random walks independent of everything else. There will not be any extra particles in $\eta'(I)$. 

First, under $\dQ$, since each particle can only take one step at a time, taking $k \geq 1$, no particle from outside the interval $[-H, H]$ at time $0$ can enter the interval $[-H + \ell, H -\ell] \subseteq [-H + 2 k \ell , H - 2 k \ell]$. 
So $\eta_s |_{[-H + 2 k \ell , H - 2 k \ell]} \succcurlyeq \eta'_s |_{[-H + 2 k \ell , H - 2 k \ell]}$ for all $s \in [0,l]$ almost surely. 
Thus, the final requirement is satisfied, and all that remains in showing that $\Qcoupling \left(\eta_\ell(x) > 0, \eta'_\ell(x) = 0 \right) \geq 2\delta$. 

There are at most $6(\rho + 1)\ell$ particles in the interval $[-3\ell + 1, 3\ell]$ at time $0$, so at time $\ell-1$ there are not more than $6(\rho + 1)l$ particles in $\{-1,0,1,2\}$. Then we require all of these particles to take a step so that $\eta_\ell(x) = 0$. This occurs with probability at least $\left( \alpha \min(q, 1-q) \right)^{6(\rho+1)l}$, by specifying the step of each particle. 

Then finally, whether $x=0$ or $x=1$, note that there is a path $\left(\mathbf{z}_s\right)_{s=0}^\ell$ for the extra particle  starting at $Z_0 \in I$ such that $Z'_\ell = x$. So $\dQ(Z_s = \mathbf{z}_s :0 \leq s \leq \ell) \geq r^{\ell}$ with $r:=\min(\alpha q, \alpha (1-q), 1-\alpha)$.

Putting together these restrictions we end up with 
\begin{equation}
\begin{split}
    \dQ\left( \eta_\ell(x) >0, \eta'_\ell(x) = 0 \right)
    &\geq \dQ \left(\eta'_\ell(x) = 0,  Z_s =\mathbf{z}_s: 0 \leq s \leq \ell \right) \\
    &\geq r^{6(\rho + 1) \ell} 
        r^\ell \\
    &\geq \left( r^2\right)^{6(\rho + 1)\ell}.
\end{split}
\end{equation}
\end{proof}


\end{document}